\documentclass[12pt]{article}
\usepackage[margin=1.0in]{geometry}
\usepackage[utf8]{inputenc}
\usepackage{tikz}
\usepackage{mathtools}
\usepackage{mathrsfs,amssymb,amsfonts,amsthm,amsmath,amsbsy}
\usepackage[english]{babel}
\usepackage[T1]{fontenc}
\usepackage{hyperref}
\usepackage{float}
\usepackage{upgreek}
\usepackage{enumitem}
\usepackage{bm}
\usepackage{amssymb}
\usepackage{mdframed}
\usepackage{verbatim}
\usepackage{dsfont}
\usepackage{stmaryrd}
\usepackage{soul}
\usepackage{xcolor}
\usepackage{titlesec}
\usepackage{listings}
\usepackage{lipsum}
\usepackage[export]{adjustbox}
\usepackage[
backend=biber,
style=numeric,maxnames=99
]{biblatex}
\renewbibmacro{in:}{}
\newtheorem{theorem}{Theorem}[section]
\newtheorem{proposition}[theorem]{Proposition}
\newtheorem{corollary}[theorem]{Corollary}

\newtheorem{lemma}[theorem]{Lemma}

\newtheorem{definition1}[theorem]{Definition}

\newtheorem{remark1}[theorem]{Remark}

\newcommand{\somme}[3]{\sum_{#1}^{#2} #3}
\newcommand{\indi}[1]{\mathds{1}_{#1}}
\newcommand{\Bn}{B^{+}(n)}
\newcommand{\Hate}{\mathcal{H}_{a}(t,e)}
\newcommand{\Jate}{J_{a}(t,e)}
\newcommand{\Jatec}{\overset{\circ}{J_a}(t,e)}
\newcommand{\Bnb}{B^{\bullet}(n)}
\newcommand{\Srn}{\mathcal{S}^{+}_r(n)}
\newcommand{\Srb}{\mathcal{S}^{\bullet}_r(n)}
\newcommand{\AnII}{\mathcal{A}_n(\varepsilon,\eta,\beta)}

\newcommand{\ct}{w_{\theta}}
\newcommand{\Tng}{T_{n,g_n}}
\newcommand{\Tngp}{T_{n,g_n,\mathbf{p}^n}}
\newcommand{\en}{e^n}

\author{Tanguy Lions\footnote{ENS Lyon, tanguy.lions@ens-lyon.fr}}

\date{}
\title{\textbf{Typical distances in high-genus triangulations}}

\graphicspath{{./figures/}{./}}
\begin{document}
\maketitle
\begin{abstract}
We study the distance between two uniformly chosen points on a uniform random triangulation whose genus $g$ is proportional to the number of faces $2n$. We show that the distance rescaled by $\log(n)$ converges in probability to a deterministic constant, which answers a conjecture of Budzinski, Chapuy and Louf \cite{budzinski2023distancesisoperimetricinequalitiesrandom}. The proof relies on the precise study of the volume growth of the ball of radius $r$ for $r$ of order $\log(n)$. The main ingredients are the recent local convergence results for uniform triangulations with boundaries obtained in \cite{lions2026locallimitsuniformtriangulations} and the isoperimetric inequalities obtained in \cite{budzinski2023distancesisoperimetricinequalitiesrandom}.
 \end{abstract}

\section{Introduction}

\paragraph{High genus random geometry.}
Random structures on high-genus surfaces have attracted a lot of interest from the mathematical community recently. Their geometric properties are difficult to study and are typically expected to exhibit expander-like behaviour, just like those of random graphs \cite{BOLLOBAS1988241,Bollobas_diamater,RIORDAN_2010}. 

A model that has attracted the attention of a large community of geometers (and, more recently, probabilists) is that of random hyperbolic surfaces sampled according to the Weil--Petersson measure. In the high-genus regime $g \to +\infty$, Mirzakhani proved that, with high probability, random hyperbolic surfaces exhibit expander-like behaviour and have logarithmic diameter \cite{mirzakhani2010growthweilpeterssonvolumesrandom}. Very recent works show that the spectral gap for typical random high-genus hyperbolic surfaces is concentrated around the maximal possible value $\frac{1}{4}$ \cite{anantharaman2025friedmanramanujanfunctionsrandomhyperbolic,anantharaman2026friedmanramanujanfunctionsrandomhyperbolic}.  Finally, the convergence of the lengths of the shortest closed geodesics has also been obtained \cite{Mirzakhani_2019,budd2026tightlengthspectrumlargegenus}. More precise estimates for distances can be obtained for models of random hyperbolic surfaces that are built using random graphs \cite{mathien2026diameternewmodelrandom,Budzinski_2021_diameter,Belyi,mathien2026sharperboundminimalpossible}. In many cases, the quantities of interest coincide with the analogue on the universal cover (the Poincaré disk).

In the discrete setting, random maps are the natural analogue of random surfaces. Unicellular maps (i.e., maps with a single face) have been shown to contain a large expander with high probability \cite{Louf2021LargeEI}. Writing $n$ for the number of vertices, their typical distances and diameter are known to be of order $\log(n)$ \cite{diameter_unicellular}. The convergence of the lengths of the shortest geodesics has been proven \cite{length_spectrum_unicellular,length_spectrum_unicellular2} and shows surprisingly similar behaviour compared to the Weil--Petersson model. Beyond the unicellular case, considering random uniform high-genus triangulations, it was recently established in \cite{lions2026largeexpandersubgraphshigh} that, with high probability, these maps contain a large expander. In \cite{budzinski2023distancesisoperimetricinequalitiesrandom}, denoting by $n$ the number of faces and by $g$ the genus, as soon as $g/n \to \theta \in (0,1/2)$, it was shown that the typical graph distances and diameter of uniform high-genus triangulations are of logarithmic order with high probability. The different results described above on distances give the order of magnitude of distances but they do not provide exact asymptotic equivalents. In the present paper, we build upon these foundations to refine the estimates for typical distances in uniform high-genus triangulations, proving that they concentrate around $D_{\theta}\log(n)$ for some deterministic constant $D_{\theta} > 0$.

\paragraph{Main theorem.}
We now define the finite models of interest. Fix integers $g \ge 0$ and $n \ge 2g-1$. We define the set of triangulations $\mathcal{T}(n,g)$ as the set of maps $t$ with genus $g$ and $2n$ faces of degree $3$, equipped with a distinguished oriented edge called the root edge. See Section~\ref{preliminaries} for more precise definitions. We denote by $T_{n,g}$ an element of $\mathcal{T}(n,g)$ chosen uniformly at random. We set $\lambda_c = \frac{1}{12 \sqrt{3}}$. For any $\lambda \in (0,\lambda_c]$, let $h = h(\lambda)$ be the unique $h \in (0,\frac{1}{4}]$ such that $\lambda = \frac{h}{(1+8h)^{\frac{3}{2}}}$ and let
\begin{align*}
d(\lambda) = \frac{h \log\Big(\frac{1+\sqrt{1-4h}}{1-\sqrt{1-4h}}\Big)}{(1+8h)\sqrt{1-4h}}.
\end{align*} It is shown in \cite{Budzinski_2020} that $d(\lambda)$ is increasing with $d(\lambda_c) = \frac{1}{6}$ and $\lim_{\lambda \to 0}d(\lambda) = 0$. Thus, for any $\theta \in [0,\frac{1}{2})$, we define $\lambda(\theta)$ as the unique number $\lambda \in (0,\lambda_c]$ such that $$d(\lambda(\theta)) = \frac{1-2\theta}{6}.$$ We also introduce $h_{\theta} := h(\lambda(\theta))$ and we set $m_{\theta} = \frac{1-2h_{\theta}-\sqrt{1-4h_{\theta}}}{2h_{\theta}} \in (0,1]$. We write $d_{T_{n,g}}$ for the graph distance on $T_{n,g}$. We can now state our main theorem.

\begin{theorem}\label{theorem_typical_distances}
     Fix $\theta \in (0,\frac{1}{2})$ and $\displaystyle \frac{g_n}{n} \to \theta$ as $n \to +\infty$. Let $e_n^1,e_n^2$ be two independent oriented edges chosen uniformly at random on $\Tng$. Let $x_n^1$ and $x_n^2$ denote their respective starting points. Then, we have
    \begin{align}\label{convergence_proba}
       \frac{d_{\Tng}(x_n^1,x_n^2)}{\log(n)} \xrightarrow[n \to +\infty]{(\mathbb{P})} D_{\theta},
    \end{align}
where $\displaystyle D_{\theta} = \frac{1}{\log(m_{\theta}^{-1})}$. The same result holds if $x_n^1,x_n^2$ are chosen to be independent uniform vertices on $\Tng$.
    \end{theorem}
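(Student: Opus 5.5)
We split the statement into a lower bound, $d_{\Tng}(x_n^1,x_n^2)\ge (D_\theta-\varepsilon)\log n$ with high probability, and an upper bound, $d_{\Tng}(x_n^1,x_n^2)\le (D_\theta+\varepsilon)\log n$ with high probability. Both come from controlling the volume growth of the ball $B_r(x_n^1)$ for $r$ of order $\log n$. The guiding heuristic is that, locally, $\Tng$ looks like the planar stochastic hyperbolic triangulation $\mathbb{T}_{\lambda(\theta)}$. In $\mathbb{T}_{\lambda(\theta)}$ the hull perimeter of the ball of radius $r$ grows like $m_\theta^{-r}$, and the volume grows at the same exponential rate; this comes from the peeling process, where one peeling layer multiplies the perimeter by $m_\theta^{-1}$ in mean. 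Once the number of vertices within distance $r$ reaches $n^{1+o(1)}$, i.e.\ at $r\approx \log n/\log(m_\theta^{-1})=D_\theta\log n$, the ball must cover a macroscopic part of the map. The passage to uniform vertices at the end will follow because the degree of the root edge's origin has a limit law with exponential tails; the change of measure is then absolutely continuous on the relevant events.

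\textbf{Lower bound.} We use a first-moment argument on the expected ball size. Fix $r=(D_\theta-\varepsilon)\log n$. We show that $\mathbb{E}[|B_r(x_n^1)|]\le n^{1-\delta}$; since $x_n^2$ is essentially uniform, $\mathbb{P}(d\le r)\le \mathbb{E}|B_r|/n\to0$. To bound the expected volume, we explore the ball layer by layer via peeling in $\Tng$, filling in the holes. We compare each step with peeling in $\mathbb{T}_{\lambda}$, using the local convergence results for triangulations with boundaries from \cite{lions2026locallimitsuniformtriangulations}. Those results give the peeling transition probabilities of a uniform triangulation with a boundary of length $p$ and genus proportion $\theta'$, uniformly as long as $p=o(n)$ and the explored region is small. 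The local parameter stays close to $\lambda(\theta)$ while the explored volume is $o(n)$. The expected perimeter after $r$ layers is then at most $(m_\theta^{-1}+\eta)^r$, and the volume is comparable up to a factor $n^{o(1)}$. A subtlety is that peeling in genus can create handles, i.e.\ steps where the exploration identifies two boundary edges. We bound the number of such events by the probability estimates from \cite{lions2026locallimitsuniformtriangulations}; they only shrink the ball, so they cannot hurt the lower bound.

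\textbf{Upper bound.} This is the main obstacle. First, we show that with high probability $|\partial B_{r_1}(x_n^1)|\ge n^{1/2+\delta}$ for $r_1=(D_\theta/2+\varepsilon)\log n$. This needs a concentration (second-moment or martingale) version of the growth estimate: the perimeter process over layers behaves like a supercritical branching-type process whose normalized limit $W$ is positive almost surely in the local limit. Since $\mathbb{T}_\lambda$ is one-ended, there is no extinction, and the convergence to $\mathbb{T}_\lambda$ lets us transfer this to the first $O(1)$ layers. After that, the law of large numbers for the growth factors gives multiplication by $m_\theta^{-1-o(1)}$ per layer for as long as the volume is at most $n^{1-\delta'}$. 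Doing the same from $x_n^2$ gives two disjoint balls, each with boundary length at least $n^{1/2+\delta}$. Second, we connect these boundaries within $o(\log n)$ extra steps using the isoperimetric inequalities of \cite{budzinski2023distancesisoperimetricinequalitiesrandom}. These state that sets of size $k$ with $k$ up to a constant fraction of the map have boundary at least $c\,k$, perhaps up to logarithmic corrections. If the two balls stayed at distance more than $\varepsilon\log n$ apart, growing each further with linear expansion would give volume larger than $n$ after $O(\log\log n)$ or $\varepsilon\log n$ steps, which is a contradiction. Combining the two steps gives $d\le 2r_1+\varepsilon\log n$. The delicate point is making the growth-rate estimate uniform over $r$ up to $(D_\theta/2)\log n$, i.e.\ controlling cumulative errors in the comparison with $\mathbb{T}_{\lambda(\theta)}$ across $\Theta(\log n)$ layers. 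I would handle this by coupling the exploration of $\Tng$ step by step with that of $\mathbb{T}_{\lambda(\theta)}$, with an error of $n^{-c}$ per step while the explored volume is $n^{1-\delta}$.
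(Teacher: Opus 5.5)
\textbf{Main gap: the two-ball connection step.} The upper bound fails where you connect the two balls. Theorem~\ref{isoperimetric_inequalities} only gives growth by a fixed, non-optimal factor $1+c$ per layer (e.g.\ $c=\ct/3$). Starting from two balls of volume $n^{1/2+\delta}$, it therefore takes about $\frac{(1/2-\delta)\log n}{\log(1+c)}$ further layers to reach volume of order $n$. That is $\Theta(\log n)$ with a bad constant, not $O(\log\log n)$ or $\varepsilon\log n$, so the claimed contradiction never arises.

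Nor can volume or expansion force two sets of total size $o(n)$ to lie within $\varepsilon\log n$ of each other. The ``birthday'' mechanism that makes two $\sqrt n$-balls meet in random graphs relies on independence of edges, which you do not have and which isoperimetry does not supply. By volume counting alone, even sharp growth from both ends only gives $d\le 2D_\theta\log n$.

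The repair, which is what the paper does, uses a single ball together with the randomness of $x_n^2$:
\begin{enumerate}
\item Grow the hull around $x_n^1$ at the sharp rate up to radius $(1-\varepsilon)D_\theta\log n$, where its volume is $n^{1-\varepsilon+o(1)}$ (Theorem~\ref{growth_of_balls}). Your growth estimate valid ``as long as the volume is at most $n^{1-\delta'}$'' is exactly what is needed here.
\item The missing factor $n^{\varepsilon}$ then costs only $O_\eta(\varepsilon\log n)$ layers of crude expansion.
\item After that, fewer than $2\eta n$ triangles lie outside $B_r^+$, so the uniform $x_n^2$ is caught with probability $1-O(\eta)$.
\end{enumerate}
Isoperimetry alone only drives growth while the complement of $B_r^+$ has a component with at least $n$ triangles. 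To get down to $2\eta n$ uncovered triangles, the paper also needs Proposition~\ref{few_isolated_faces}, which forces $|\partial^*B_r^+|\ge\beta\eta n$ when all complementary components are small.

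\textbf{Secondary gap: sharp growth over $\Theta(\log n)$ layers.} Theorem~\ref{local_limit_boundary} is purely qualitative, so it does not give a step-by-step coupling with error $n^{-c}$ per step. The bound $\mathbb{E}|B_r|\le n^{1-\delta}$ is more than you need and hard to justify: the growth is only controlled on events of probability $1-o(1)$ with no rate, and off them the ball may have size of order $n$. For the lower bound it suffices that $|B_{r+1}^+|=o(n)$ in probability, since $\mathbb{P}(d\le r\mid \Tng,x_n^1)\le\min(1,|B_{r+1}^+|/n)$.

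The paper avoids rates altogether. Conditionally on $B_r^\bullet$, it shows $|\partial^*B_{r+1}^\bullet|/|\partial^*B_r^\bullet|\to m_\theta^{-1}$ in $L^1$ on the isoperimetric event. This holds uniformly over boundary data with $|\mathbf p|=o(n)$ and $\ell=o(|\mathbf p|)$, the latter supplied by Proposition~\ref{multicurves_few_boundaries}. The proof combines three ingredients:
\begin{itemize}
\item the local limit at the boundary;
\item the second-moment degree bound of Proposition~\ref{second_moment_bounded_degree}, for uniform integrability;
\item Proposition~\ref{J_lies_on_hole_hull}, showing that the hull boundary can be read off locally.
\end{itemize}
Markov's inequality on the sum of these $o(1)$ log-errors over $O(\log n)$ layers then gives total error $o(\log n)$ in probability, which is all that is needed. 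The first $\log(n)^{1/2}$ layers are handled by a crude $n^{o(1)}$ expectation bound. Working with hulls rather than balls is essential here, to keep the complement connected with genus ratio near $\theta$; your ``filling in the holes'' leaves this unspecified.

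\textbf{Uniform vertices.} No degree tails are needed for the vertex version. The law of $(y_n^1,y_n^2)$ has density at most $\bigl(6n/(n+2-2g_n)\bigr)^2\le C_\theta$ with respect to that of $(x_n^1,x_n^2)$, since all degrees are at least $1$.
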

We briefly discuss the origin of the constant $D_{\theta}$. In \cite{Budzinski_2020}, the authors show that $\Tng$ converges locally in distribution to a random triangulation of the plane called PSHT defined in \cite{PSHT}. Moreover, the volume of the ball of radius $r$ in the PSHT is typically of order $\exp(\frac{r}{D_{\theta}})$. This explains why the constant $D_{\theta}$ appears in the distances in $\Tng$ (see the strategy below).

Let us now compare Theorem~\ref{theorem_typical_distances} with the existing literature on high-genus random geometry. All the results mentioned above fall into one of the two following situations:

\begin{enumerate}
    \item \textbf{Order-of-magnitude estimates:} Expander properties (such as logarithmic diameter or typical distances, Cheeger constant estimates, spectral gap) are established only up to a constant factor \cite{budzinski2023distancesisoperimetricinequalitiesrandom, mirzakhani2010growthweilpeterssonvolumesrandom, diameter_unicellular}.
    \item \textbf{Simple local structure:} The model has a simple local structure like the hyperbolic plane \cite{anantharaman2025friedmanramanujanfunctionsrandomhyperbolic, anantharaman2026friedmanramanujanfunctionsrandomhyperbolic, Budzinski_2021_diameter} or an infinite tree \cite{local_unicellular, Bollobas_diamater, RIORDAN_2010} in discrete settings. Comparing to this structure yields easy 'a priori' bounds which sometimes happen to be sharp.
\end{enumerate}

To our knowledge, Theorem~\ref{theorem_typical_distances} provides the first sharp asymptotic for typical distances in a setting where the local limit has a rich structure. Indeed, it captures the exact constant despite a significantly more intricate local geometry than that of the aforementioned models.

    \paragraph{Strategy of the proof.}
We give an overview of the general strategy used to prove Theorem~\ref{theorem_typical_distances}. Our first observation is that
\begin{center}
$d_{\Tng}(x_n^1,x_n^2) \le r \iff x_n^2 \in B_{r}(\Tng,x_n^1)$,
\end{center}
where $B_{r}(\Tng,x_n^1)$ denotes the ball of radius $r$ centered at $x_n^1$ (see Section~\ref{local_topology}) (we will also use the notation $B_r^+$ for a related notion defined precisely in Section~\ref{section_hulls}). It follows that we can write
\begin{align*}
\mathbb{P}\big(d_{\Tng}(x_n^1,x_n^2) \le r \text{ | }x_n^1,\Tng\big)
= \frac{|B_{r}(\Tng,x_n^1)|}{6n},
\end{align*}

where $6n$ denotes the total number of oriented edges in $\Tng$. Thus, the main task is to understand the growth of $(B_r(\Tng,e_n))_{r \ge 0}$, where $e_n$ is a uniformly chosen oriented edge on $\Tng$. We now explain how this is done. We list below the main ingredients used throughout the paper:
\begin{itemize}
	\item[$\bullet$]\textbf{Local limits.} 
	First, using \cite[Theorem~1]{Budzinski_2020}, the sequence of triangulations $(\Tng,e_n)_{n \ge 0}$ converges locally in distribution to an infinite triangulation of the plane called PSHT. A direct consequence is that for any $r \ge 0$ fixed, the volume of the ball of radius $r$ of $(\Tng,e_n)$ converges in distribution to the volume of the ball of radius $r$ of the PSHT. Since the latter grows as $(m_{\theta}^{-1})^r$, we deduce that for any fixed $r \ge 0$, we have for $n$ large enough $|B_r(\Tng,e_n)| \approx (m_{\theta}^{-1})^r$. Unfortunately, this argument works only for bounded values of $r$ and we need to extend the previous result to values of $r$ of order $\log(n)$. For this, we want to prove that $|B_{r+1}(\Tng,e_n)| \approx m_{\theta}^{-1} |B_r(\Tng,e_n)|$ as long as $|B_r(\Tng,e_n)| = o(n)$. The key ingredient is a recent local limit for uniform high-genus triangulations with boundaries \cite[Theorem 1.1]{lions2026locallimitsuniformtriangulations}. The idea is as follows: assume that we have discovered $B_r(\Tng,e_n)$, that $|B_r(\Tng,e_n)| = o(n)$ and that $\Tng \setminus B_r(\Tng,e_n)$ is\footnote{In general, there is no reason for $\Tng \setminus B_r(\Tng,e_n)$ to be connected. As we will see in Section~\ref{section_hulls}, we will consider another notion of balls (the hulls) where this is true.} connected, and observe that $\Tng \setminus B_r(\Tng,e_n)$ is a uniform high-genus triangulation with boundaries. To understand $B_{r+1}(\Tng,e_n)$, one needs to understand the local behaviour of $\Tng \setminus B_r(\Tng,e_n)$ close to the boundary faces (see Figure~\ref{strategy1}). This is exactly done in \cite[Theorem 1.1]{lions2026locallimitsuniformtriangulations} which states that, in distribution, the component $\Tng \setminus B_r(\Tng,e_n)$ rooted on a boundary converges locally in distribution to the half-plane analogue of the PSHT. A consequence of that is the desired estimate $|B_{r+1}(\Tng,e_n)| \approx m_{\theta}^{-1} |B_r(\Tng,e_n)|$. By iterating this procedure, we expect that $|B_{r}(\Tng,e_n)| \approx m_{\theta}^{-r}$ as long as $|B_r(\Tng,e_n)| = o(n)$.

\begin{figure}[H]
\centering
\includegraphics[scale=0.2]{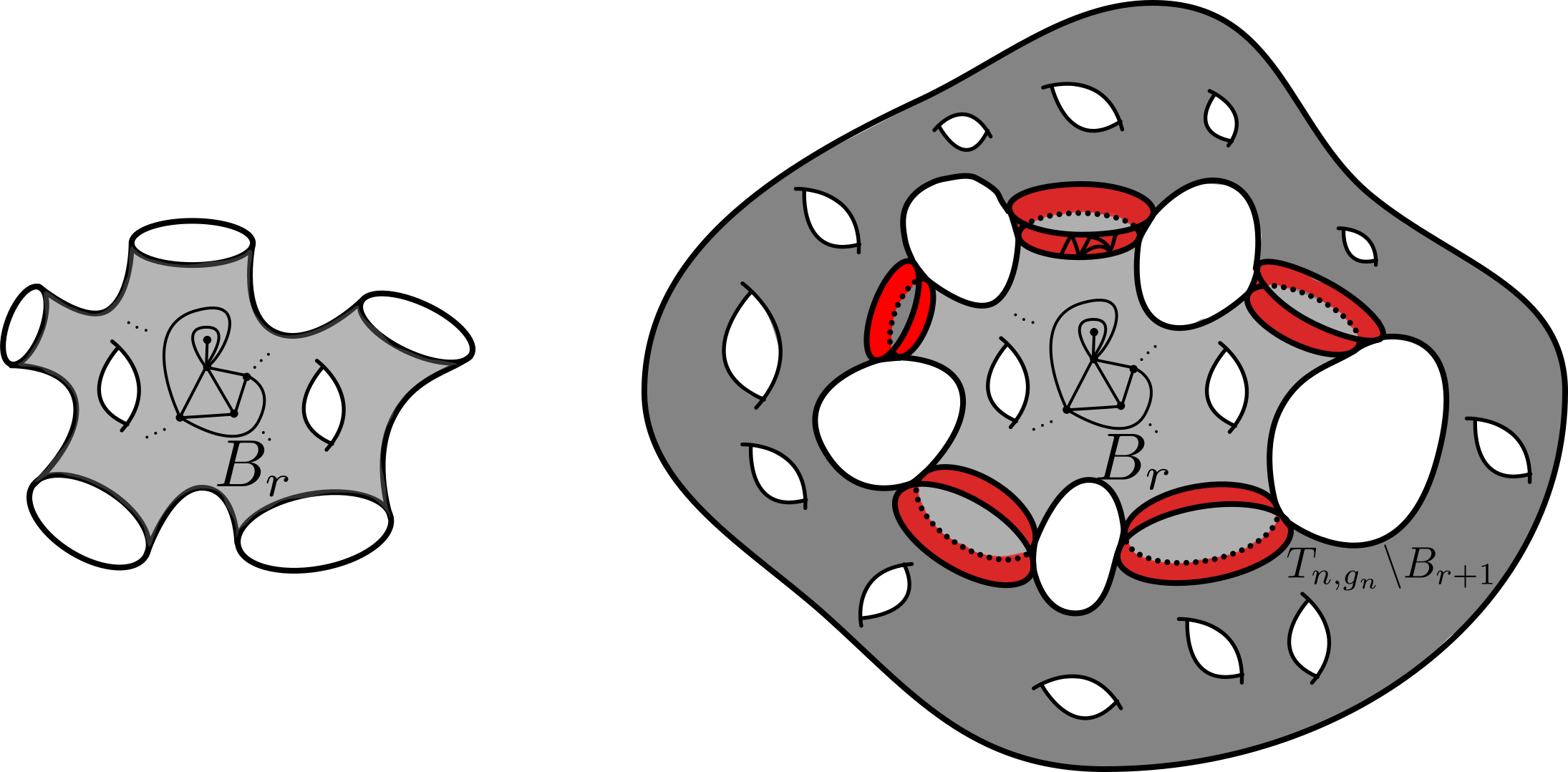}
\caption{On the left, we represent the ball of radius $r$, denoted by $B_r$. On the right, we represent $\Tng$. In this example, $\Tng \setminus B_r$ consists of a single connected component. The red region corresponds to $B_{r+1} \setminus B_r$. Thus, the red region can be understood using the local limit of $\Tng \setminus B_r$ along its boundary.}
\label{strategy1}
\end{figure}

	However, three issues arise. First, it may happen that $\Tng \setminus B_r(\Tng,e_n)$ does not consist of a single connected component (see Figure~\ref{strategy2}). For example, it could happen that $\Tng\setminus B_r(\Tng,e_n)$ has two components $T_1,T_2$ of sizes $n_1,n_2$ and genera $g_1,g_2$, with $\frac{g_1}{n_1}$ not close to $\frac{g}{n}$, which prevents us from applying \cite{lions2026locallimitsuniformtriangulations}. Second, even though for any $r \ge 0$ such that $|B_r(\Tng,e_n)| = o(n)$, with high probability, the ratio $\frac{|B_{r+1}(\Tng,e_n)|}{|B_r(\Tng,e_n)|}$ is close to the predicted value, it does not imply that with high probability this holds simultaneously for all $r \ge 0$ such that $|B_r(\Tng,e_n)| = o(n)$.
	 The third issue is that our strategy relies on \cite{lions2026locallimitsuniformtriangulations} which only applies as long as $|B_r(\Tng,e_n)| = o(n)$.

	\item[$\bullet$]\textbf{Isoperimetric inequalities and hulls.}
	We address the first and third issues in this bullet, and defer the second one to the next bullet. The first and third problems can both be resolved using the isoperimetric inequalities established in \cite{budzinski2023distancesisoperimetricinequalitiesrandom}. Informally (see Section~\ref{section_isoperimetric_inequalities} for more precise definitions), they state that, with high probability, for any subset $X \subset \Tng$, as long as $|X|$ is of order at least $\log(n)$ and $|X| \le \frac{n}{2}$, we have $|\partial X| \ge c |X|$ for some universal constant $c > 0$. A consequence is that as long as $|B_r(\Tng,e_n)| = o(n)$ and $|B_r(\Tng,e_n)|$ is of order at least $\log(n)$, the ball $B_r(\Tng,e_n)$ might disconnect $\Tng$ into several connected components, but only one of these components contains almost all the mass and almost all the genus of $\Tng$ (see Figure~\ref{strategy2}).

\begin{figure}[H]
\centering
\includegraphics[scale=0.15]{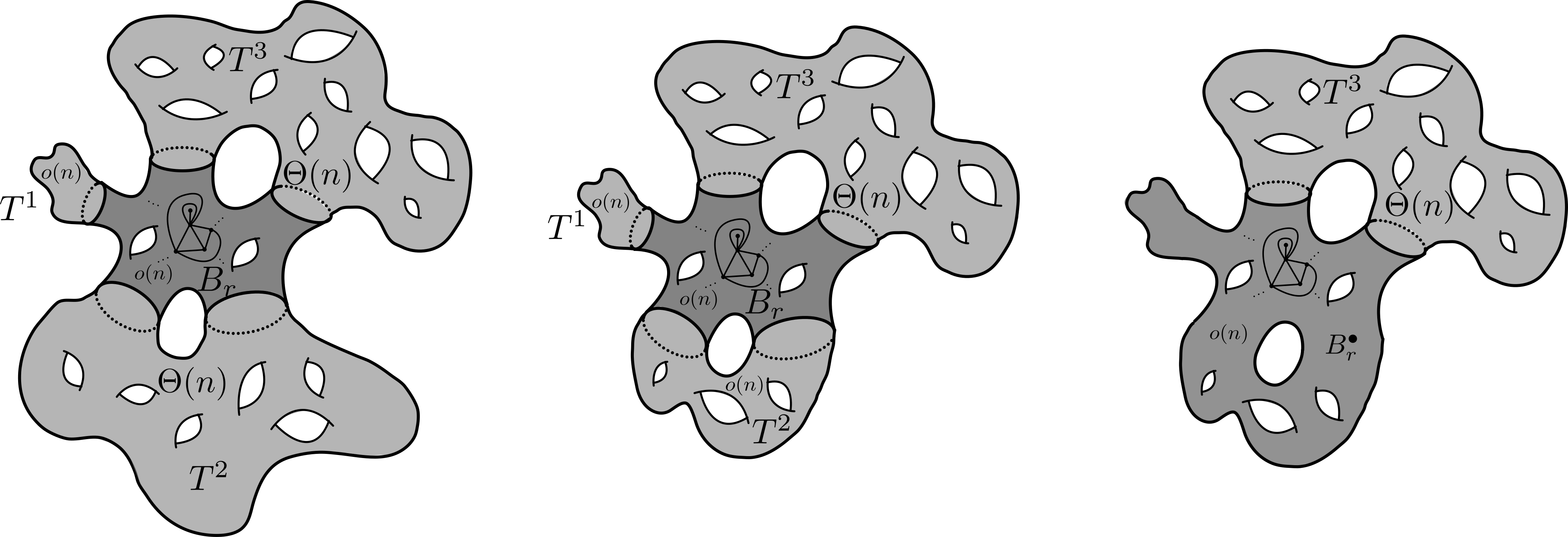}
\caption{In these three figures, we assume that $ |B_r(\Tng,x_n)| = o(n)$ and $|B_r(\Tng,x_n)|$ is of order at least $\log(n)$. On the left, we represent a pathological situation where both $T^2$ and $T^3$ contain a positive proportion of the total mass of $\Tng$. By the isoperimetric inequality, this event occurs with small probability. In the middle, the component $T^3$ contains almost all the mass of $\Tng$. This is the most likely situation. On the right, the hull of radius $r$ is shown in dark grey and is obtained by gluing the small components $T^1$ and $T^2$ to $B_r$.}
\label{strategy2}
\end{figure}

	Therefore, it is more convenient to study the \emph{hull}, denoted by $B_r^{\bullet}(\Tng,x_n)$, obtained by gluing the connected components except the "large" one to $B_r(\Tng,x_n)$. This solves the first problem since by definition of the hull $\Tng \setminus B_r^{\bullet}(\Tng,x_n)$ is connected. Moreover, by the isoperimetric inequality, we will argue that $|B_r^{\bullet}(\Tng,x_n)|$ approximates $|B_r(\Tng,x_n)|$ well.

	For the third problem, the previous sketch of proof shows that $|B_{(1-\varepsilon)D_{\theta}\log(n)}^{\bullet}(\Tng,e_n)| \approx n^{1-\varepsilon}$. To fill the gap, we will simply use the isoperimetric inequality which gives the crude bound $|B_{r+1}^{\bullet}| \ge (1+c)|B_r^{\bullet}|$ with $c > 0$. The constant $c$ is \emph{a priori} not the optimal constant for the growth but sufficient for our needs here since $\varepsilon$ is small.

	\item[$\bullet$]\textbf{Second moment bounds for the degree distribution.}
	To address the second problem, we must ensure that on the small probability event where the local convergence of \cite{lions2026locallimitsuniformtriangulations} differs from the expected behaviour, the volume of the hull does not grow too fast. To this end, we establish a uniform second moment bound (see Proposition~\ref{second_moment_bounded_degree}) for the degree distribution of the root vertex in random high-genus triangulations with boundaries.
\end{itemize}

\paragraph*{Structure of the paper}
The paper is organized as follows. In Section~\ref{preliminaries}, we introduce all the definitions and notions that we will use throughout the paper. In Section~\ref{section_final_proof}, we give the proof of Theorem~\ref{theorem_typical_distances} assuming Theorem~\ref{growth_of_balls} which gives the correct growth rate for the ball of radius $(1-\varepsilon)D_{\theta}\log(n)$ for any $\varepsilon > 0$. In Section~\ref{section_boundary}, we study high-genus triangulations with boundaries and more precisely their growth around the boundary. Finally, in Section~\ref{section_growth_of_hulls}, we prove Theorem~\ref{growth_of_balls}.

\paragraph{Acknowledgements.} 
I thank Thomas Budzinski for his crucial help at various stages of this project. I also thank Grégory Miermont for stimulating discussions about this project.
\tableofcontents

\section{Preliminaries}\label{preliminaries}
\subsection{Definitions}\label{Definitions}
We begin by recalling the main definitions used throughout the paper.

A (finite or infinite) \emph{map} $m$ is obtained by gluing together a collection of oriented polygons along their edges, with matching orientations, so that the resulting surface is connected. If finitely many polygons are glued, the resulting surface is orientable, and we can define its genus. We say that a map $m$ is \emph{rooted} if it is equipped with a distinguished oriented edge called the \emph{root edge}. The face on the right of the root edge is called the \emph{root face}, and the vertex at its starting point is called the \emph{root vertex}. We denote by $m^{*}$ the dual of $m$, defined as the map where the vertices correspond to the faces of $m$ and two vertices of $m^{*}$ are connected by an edge if the corresponding faces are connected by an edge in $m$.

A \emph{triangulation} is a map all of whose faces have degree~$3$. We focus on \emph{type-I triangulations}, that is, triangulations that may contain multiple edges and loops. For $n \ge 1$ and $g \ge 0$, we denote by $\mathcal{T}(n,g)$ the set of rooted triangulations of genus~$g$ with $2n$ faces. By Euler's formula, a triangulation in $\mathcal{T}(n,g)$ has $3n$ edges and $n+2-2g$ vertices. In particular, the set $\mathcal{T}(n,g)$ is non-empty if and only if $n \ge 2g-1$. We denote by $\tau(n,g)$ the cardinality of $\mathcal{T}(n,g)$, and we write $T_{n,g}$ for a uniform random triangulation in $\mathcal{T}(n,g)$.

In this paper, we will consider maps with boundaries. We introduce two notions, the first including the second.

\begin{definition1}\label{triangulation_with_holes}
A triangulation with holes is a finite map $t$ with:
\begin{itemize}
	\item[$\bullet$] A family of distinguished faces, called the boundaries (or external faces) $\partial_1,\dots,\partial_{\ell}$, of degrees $p_1,\dots,p_{\ell} \ge 1$, which are vertex-simple (that is, all vertices of $\partial_i$ are distinct) and share no vertices. Each boundary $\partial_i$ is equipped with a root edge $e_i$ such that $\partial_i$ lies to the right of $e_i$.
	\item[$\bullet$] A family of distinguished faces $h_1,\dots,h_{r}$ of degrees $q_1,\dots,q_r \ge 1$, called the holes, which are edge-simple (each $h_i$ is composed of $q_i$ edges) and share no edges.
	\item[$\bullet$] Faces that are neither boundaries nor holes have degree~$3$.
	\item[$\bullet$] The dual $t^{*}$ remains connected after removing the boundaries.
\end{itemize} 
The perimeter of $t$ is the tuple $\mathbf{p} =(p_1,\dots,p_{\ell})$, and the boundary length or total perimeter is $|\mathbf{p}| = p_1+\cdots+p_{\ell}$. Note that in the above definition, a boundary and a hole may share vertices and edges. The internal faces of $t$ are the faces that are neither boundaries nor holes. We denote by $F_{\mathrm{in}}(t)$ the set of internal faces of $t$. The internal vertices (resp. edges) are the vertices (resp. edges) that do not lie on a hole.
\end{definition1}

This definition extends naturally to infinite triangulations and also to the case where $p_i = +\infty$. For a triangulation with holes $t$, we write $\partial^{*}t$ for the union of holes and $\partial t$ for the union of boundaries. We denote by $|t| := \#F_{\mathrm{in}}(t)$ the number of internal faces in $t$.

\begin{definition1}\label{triangulation_multipolygon}
For $\ell \ge 0$ and $\mathbf{p} = (p_1,\dots,p_{\ell})\in (\mathbb{N}^{*})^{\ell}$, a triangulation of the $\mathbf{p}$-multi-polygon (or of the $\mathbf{p}$-gon) is a triangulation with holes that has perimeter $\mathbf{p}$ and has no holes.
\end{definition1}

One can think of a triangulation with holes as a neighbourhood of the root in a triangulation of the $\mathbf{p}$-gon (see Figure~\ref{triangulation_multi_polygone}). The holes are faces that can be filled by other triangulations of multi-polygons.

\begin{figure}[H]
    \centering
    \includegraphics[scale = 0.14]{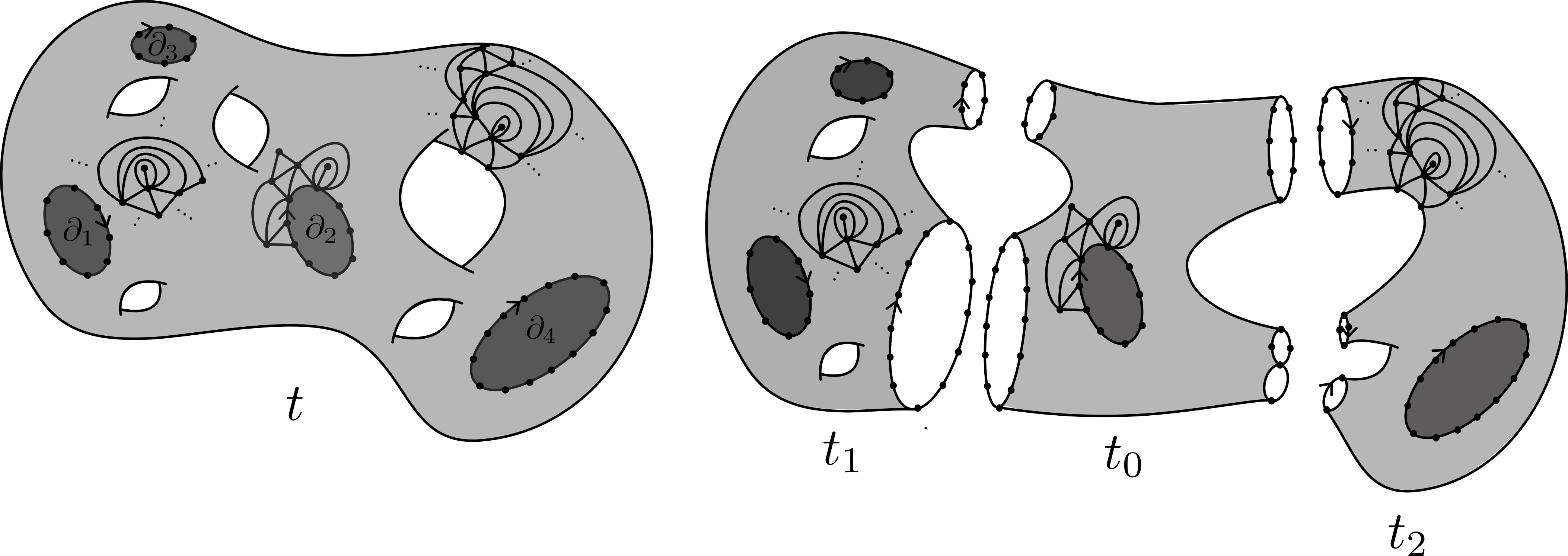}
    \caption{Left: a triangulation of the $(6,8,9,14)$-gon of genus~$5$. 
    Right: a triangulation with holes $t_0 \subset t$, which has $1$ boundary and $5$ holes. Triangulations of multi-polygons $t_1$ and $t_2$ are the connected components of $t \setminus  t_0$. The triangulation $t_1$ (resp. $t_2$) is a triangulation of the $(6,8,5,13)$-gon (resp. $(14,2,4,7)$-gon).}
    \label{triangulation_multi_polygone}
\end{figure}
For $\mathbf{p}=(p_1,\cdots,p_\ell)\in(\mathbb{N}^*)^\ell$, we denote by $\mathcal{T}_{\mathbf{p}}(n,g)$ the set of triangulations of the $\mathbf{p}$-gon of genus $g$ with $2n-\somme{i=1}{\ell}{(p_i-2)}$ internal triangles, and by $\tau_{\mathbf{p}}(n,g)$ its cardinality. One can verify using Euler's formula that a triangulation of the $\mathbf{p}$-gon has $n+2-2g$ vertices.\\
In the following, we want to introduce a notion of local distance (see Section~\ref{local_topology}) between maps. Thus it is convenient to have only one root edge, not as in $\mathcal{T}_{\mathbf{p}}(n,g)$. For $t \in \mathcal{T}_{\mathbf{p}}(n,g)$ and an oriented edge $e$ of $t$, we define the \emph{rooted triangulation of the $\mathbf{p}$-gon} $(t,e)$ and call $e$ its root. We write $\mathcal{T}^1_{\mathbf{p}}(n,g)$ for the set of all these rooted triangulations of multi-polygons. We also define $\mathcal{T}^{1} = \displaystyle \bigsqcup_{\substack{g,n,\ell\ge 0\\p_1,\cdots,p_{\ell}\ge 1}}\mathcal{T}^1_{p_1,\cdots,p_{\ell}}(n,g)$. 

If $(t_0,e_0)$ is a rooted triangulation with holes and $(t,e)$ is a rooted triangulation of the $\mathbf{p}$-gon, we write $(t_0,e_0)\subset (t,e)$ if $(t,e)$ can be obtained\footnote{ Note that by the last item in Definition~\ref{triangulation_with_holes}, if $(t_0,e_0)\subset (t,e)$, there is a unique way to obtain $(t,e)$ by filling the holes of $(t_0,e_0)$} from $(t_0,e_0)$ by gluing one or several triangulations $t_1,\dots,t_k$ of multi-polygons to the holes of $t_0$ (see Figure~\ref{triangulation_multi_polygone}). We write $(t,e) \setminus  (t_0,e_0)$ for the collection of triangulations of multi-polygons that have to be glued to $t_0$ to obtain $(t,e)$.

If $(t,e)$ is an infinite rooted triangulation of the $\mathbf{p}$-gon, we say that it is \emph{one-ended} if, for every $(t_0,e_0) \subset (t,e)$ with a finite number of internal faces, only one connected component of $(t,e)\setminus  (t_0,e_0)$ is infinite. We say that $(t,e)$ is \emph{planar} if, for every $(t_0,e_0) \subset (t,e)$ with a finite number of internal faces, the triangulation with holes $(t_0,e_0)$ is planar. For two vertices $x,y$ in a triangulation with holes $t$, we write $d_{t}(x,y)$ for the graph distance between $x$ and $y$ in $t$. If $e$ is an oriented edge in $t$ with starting point $x$, we write $d_{t}(e,y) = d_{t}(x,y)$. If $e,e'$ are oriented edges in $t$ with starting points $x$ and $x'$, we write $d_{t}(e,e') = d_{t}(x,x')$.

\subsection{Combinatorics}\label{Section_combi}
In this subsection, we recall some combinatorial estimates in the planar case.
 For $p \ge 1$ and $\lambda > 0$ we define $w_{\lambda}(p) = \somme{n \ge 0}{}{\tau_p(n-2+p,0)\lambda^{n}}$. Note that $\tau_p(n-2+p,0)$ denotes the number of triangulations of the $p$-gon with $n$ internal vertices. This quantity is finite if and only if $\lambda \le \lambda_c$. For any $0 < \lambda \le \lambda_c$, we write $h \in (0,\frac{1}{4}]$ as the unique solution to the equation $\lambda = \displaystyle \frac{h}{(1+8h)^{\frac{3}{2}}}$. Then, using \cite{krikun2007explicitenumerationtriangulationsmultiple} one can extract the formula 
 \begin{align*}
 w_{\lambda}(1) = \frac{1}{2} - \frac{1+2h}{2\sqrt{1+8h}},
 \end{align*}
 and for $p \ge 2$
 \begin{align}\label{formula_wlambda}
 w_{\lambda}(p) =  (2+16h)^p \frac{(2p-5)!!}{p!}\frac{((1-4h)p+6h)}{4(1+8h)^{\frac{3}{2}}}.
 \end{align}
Then \cite[$(4)$]{krikun2007explicitenumerationtriangulationsmultiple}
 gives the explicit formula 
 \begin{align}\label{formula_W}
 W_{\lambda}(x) = \frac{\lambda}{2}\bigg(\bigg(1-\frac{1+8h}{h}x\bigg)\sqrt{1-4(1+8h)x}-1+\frac{x}{\lambda}\bigg).
 \end{align} 
 For $p \ge 1$ and $\lambda \in (0,\lambda_c]$, we can assign to any planar triangulation $t$ of the $p$-gon the weight $w_{\lambda}(p,t) = \lambda^{|t|_{\mathrm{in}}}$, where $|t|_{\mathrm{in}}$ denotes the number of internal vertices of $t$. Then we call the Boltzmann triangulation of the $p$-gon (with parameter $\lambda$), the random planar triangulation of the $p$-gon $T^{(p)}_{\lambda}$ that takes value $t$ with probability $\displaystyle \frac{w_{\lambda}(p,t)}{w_{\lambda}(p)}$. We also give some combinatorial estimates from \cite{lions2026locallimitsuniformtriangulations} that will be useful for the rest of the paper. 
\begin{lemma}[{\cite[Lemma 3.2]{lions2026locallimitsuniformtriangulations}}]\label{bound_separating}
For any $n,g\ge 0$ such that $n \ge 2g-1$ we have 
\begin{align*}
    \somme{\substack{n_1+n_2=n\\g_1+g_2 = g}}{}{\tau(n_1,g_1)\tau(n_2,g_2)} \le \tau(n+1,g).
\end{align*}

\end{lemma}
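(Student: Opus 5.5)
We prove the bound with an explicit injection: from a pair of triangulations $(t_1,t_2)\in\mathcal{T}(n_1,g_1)\times\mathcal{T}(n_2,g_2)$ with $n_1+n_2=n$ and $g_1+g_2=g$ we build a triangulation in $\mathcal{T}(n+1,g)$, and show the pair can be recovered from the result. Counting faces, we must go from $2n_1+2n_2=2n$ faces to $2n+2$ faces, so the gluing has to create exactly two new triangles. Counting vertices, the pair has $(n_1+2-2g_1)+(n_2+2-2g_2)=n+4-2g$ vertices, while the target has $n+3-2g$, so exactly one vertex must be identified.

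\textbf{Construction.} Cut $t_1$ open along its root edge $e_1$, which creates a digon, and do the same for $t_2$ along $e_2$. Now glue the two surfaces along a cylinder. Concretely, identify the origin of $e_1$ with the origin of $e_2$, and insert two triangles that fill the region between the two digons. Equivalently, glue the two digon boundaries to the two boundaries of an annulus made of two triangles, whose outer vertices are the two endpoints of $e_1$ and of $e_2$, with one pair of corners pinched together.

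\textbf{Checking the counts.} The face count is $2n+2$. The vertex count is $n+4-2g-1$ once the single identification is made. The Euler characteristic is additive under this connected-sum-type gluing, since gluing along a separating cycle adds genera. The result is therefore a closed orientable triangulation of genus $g_1+g_2=g$, and we root it at a canonical new edge, for instance the edge of the new triangle that corresponds to $e_1$.

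\textbf{Injectivity.} From the output we locate the root. The two inserted triangles form a separating cycle of length $2$ or $3$ passing through the pinched vertex, and knowing the root tells us how this cycle is oriented. Removing the two special triangles and closing up the digons recovers $(t_1,e_1)$ and $(t_2,e_2)$ uniquely. This also recovers $n_1,n_2,g_1,g_2$, so the maps from all the different terms of the sum have disjoint images.

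\textbf{Main obstacle.} The delicate step is arranging the local gadget so that the two new triangles are honest triangles compatible with the type-I convention (loops and multiple edges are allowed), and so that the decomposition is canonically recoverable from the root alone, even when $t_i$ are degenerate, e.g.\ $n_i$ small or the root edge a loop. The fallback is a cruder version: cut each $t_i$ along its root edge to get a triangulation of the $2$-gon, glue the two $2$-gons along a common cycle after identifying one vertex, and add the two triangles at the junction where the new root is placed. If some configurations fail to be injective, handle them separately by fixing a convention for which endpoint gets identified.
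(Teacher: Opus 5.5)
The paper only quotes this lemma, so there is no proof here to match yours. The natural proof is a single peeling step. It is what you get by adapting the paper's argument for the boundary analogue $D\le\tau_{\mathbf p}(n+1,g)$ in \eqref{bound_ABCD}.

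By the root transformation used in Proposition~\ref{control_growth_small_r}, $\tau(m,h)=\tau_1(m,h)$, where $\mathcal{T}_1(m,h)$ consists of triangulations of the $1$-gon with $2m+1$ internal triangles. Take an element of $\mathcal{T}_1(n+1,g)$ and look at the configurations where the triangle incident to the boundary loop has its two other sides also loops at the root vertex, and removing it disconnects the map. These are in bijection with $\bigsqcup\mathcal{T}_1(n_1,g_1)\times\mathcal{T}_1(n_2,g_2)$ over $n_1+n_2=n$, $g_1+g_2=g$. The counts are $(2n_1+1)+(2n_2+1)+1=2(n+1)+1$ internal triangles and $V_1+V_2-1$ vertices, and the lemma follows at once. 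Every boundary there is a $1$-gon, hence automatically vertex-simple, so loops and multiple edges need no care. With the convention $\tau(0,0)=1$, the terms $n_i=0$ are covered by the one-triangle $1$-gon.

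Your injection (open both root edges, glue in a two-triangle annulus pinched at the common origin) also works, and your face, edge and vertex counts are right. As written, though, it stops exactly at the step you call the main obstacle. Your fallback has the same defect: cutting along a loop never gives a vertex-simple $2$-gon, since topologically it gives two $1$-gons.

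The way to close it is to do every operation on darts. Doubling $e_i$ into two parallel edges bounding a degree-$2$ face is always a legal map operation that preserves vertices and genus. When $e_i$ is a loop at $v$, this digon simply has both corners at $v$. Each hole of your gadget has its two corners at distinct vertices ($a\neq b_1$, $a\neq b_2$). So gluing a digon onto it replaces each corner of the digon by a single arc of a rotation cycle of the gadget: rotation cycles are merged, never split.

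Hence the result has $V_1+V_2+3-4=n+3-2g$ vertices in every case. For a loop at $v$, the arcs at $a$ and at $b_1$ are inserted at the two corners of $v$, and $\{v,a,b_1\}$ becomes one vertex. The result also has $3n+3$ edges and $2n+2$ triangular faces. So it lies in $\mathcal{T}(n+1,g)$ by Euler's formula, and no connected-sum argument is needed.

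Injectivity is best argued locally rather than through a separating cycle.
\begin{itemize}
\item Root at the copy $x$ of $e_1$, oriented like $e_1$, that has an inserted triangle on its right; the root face is then that triangle.
\item The side $z$ it shares with the other inserted triangle is located by its position relative to $x$, and the face across $z$ is the second inserted triangle.
\item The other two sides of that triangle, identified by their positions relative to $z$, are $x'$ and $y'$.
\item Deleting both triangles and $z$, and regluing $x$ with $x'$ and $y$ with $y'$, inverts the construction.
\end{itemize}
The component containing the reglued root is $(t_1,e_1)$, which also shows that the images for different $(n_1,g_1,n_2,g_2)$ are disjoint.
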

The next Lemma shows that the volumes $\tau_{\mathbf{p}}(n,g)$, $\tau_{\mathbf{p}}(n+1,g)$, $\tau_{(p_1+1,p_2,\cdots,p_{\ell})}(n,g)$ differ from each other by a constant factor as long as $\frac{g}{n}$ is not too close to $\frac{1}{2}$ and $|\mathbf{p}| \ll n$.
\begin{lemma}[{\cite[Lemma 3.3]{lions2026locallimitsuniformtriangulations}}]\label{bounded_ratio_vertices}
    Let $\varepsilon > 0$. Then there is a constant $C_{\varepsilon} > 0$ such that for any $n,g \ge 0$ satisfying $\frac{g}{n}\le \frac{1}{2}-4\varepsilon$ and for every $p_1,\cdots,p_{\ell} \ge 1$ such that $|\mathbf{p}| \le \varepsilon n$, we have 
    \begin{align*}
        1 \le \frac{\tau_{\mathbf{p}}(n+1,g)}{\tau_{\mathbf{p}}(n,g)} \le C_{\varepsilon}  \hspace*{0.2cm}\text{ and }   \hspace*{0.2cm}   C_\varepsilon^{-1} \le \frac{\tau_{(p_1+1,p_2,\cdots,p_{\ell})}(n,g)}{\tau_{(p_1,p_2,\cdots,p_{\ell})}(n,g)} \le 1.
    \end{align*}
\end{lemma}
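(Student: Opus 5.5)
Both inequalities in Lemma~\ref{bounded_ratio_vertices} will come from explicit injections for the lower bound $1$ and upper bound $1$, and from a reverse construction with bounded multiplicity for the constants $C_\varepsilon$.

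\textbf{Monotonicity in $n$: showing $\tau_{\mathbf{p}}(n+1,g)\ge \tau_{\mathbf{p}}(n,g)$.}
1. A triangulation in $\mathcal{T}_{\mathbf{p}}(n,g)$ has $n+2-2g$ vertices. Passing to $n+1$ adds one vertex and two internal faces.
2. Define the injection by taking the root edge of the first boundary and splitting the internal triangle adjacent to it. Concretely, insert a new degree-$2$ vertex on a doubled copy of that edge, i.e. open the edge into a $2$-gon and fill it with the unique two-triangle triangulation containing one internal vertex.
3. The operation is reversible: the inserted vertex sits at a canonical location next to the root, so it can be recognised and removed. This gives $\tau_{\mathbf{p}}(n,g)\le\tau_{\mathbf{p}}(n+1,g)$.

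\textbf{Monotonicity in the perimeter: showing $\tau_{(p_1+1,\dots)}(n,g)\le\tau_{\mathbf{p}}(n,g)$.} The argument is symmetric.
1. Start from a triangulation of the $(p_1+1,p_2,\dots)$-gon and glue a triangle onto the boundary along the root edge and the next boundary edge.
2. This shortens the first boundary by one and increases the number of internal triangles by one. With the convention $2n-\sum(p_i-2)$ for the number of internal triangles, $n$ stays fixed.
3. Vertex-simplicity of the resulting boundary could fail when the two glued boundary vertices already coincide or are adjacent. In that case I would instead glue along a different pair of edges, chosen canonically.
4. To recover the original triangulation, peel the triangle adjacent to the root edge.

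\textbf{Upper bound $\tau_{\mathbf{p}}(n+1,g)\le C_\varepsilon\,\tau_{\mathbf{p}}(n,g)$.} I would combine a local surgery with a double-counting argument.
1. Remove a vertex of degree at most $K$ and retriangulate its star. This maps $\mathcal{T}_{\mathbf{p}}(n+1,g)$ to $\mathcal{T}_{\mathbf{p}}(n,g)$.
2. Each image has at most $O(n)\cdot c_K$ preimages, because one must choose the location and the local pattern to reinsert.
3. Count the removable vertices. The mean degree is $6n/(n+2-2g)$, which is bounded by $3/(2\varepsilon)$ roughly, since $g/n\le\frac12-4\varepsilon$. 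By Markov's inequality, a proportion $\ge1/2$ of vertices have degree $\le K_\varepsilon$, so each source triangulation has $\gtrsim\varepsilon n$ removable vertices.
4. Also exclude vertices near the boundary and loops or multiple edges that obstruct the surgery. Using $|\mathbf{p}|\le\varepsilon n$, boundary vertices are at most a fraction of the count.
5. Double counting then gives the bound: $\tau_{\mathbf p}(n+1,g)\cdot \varepsilon n\lesssim \tau_{\mathbf p}(n,g)\cdot c_K n$.

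\textbf{Perimeter lower bound $\tau_{(p_1+1,\dots)}(n,g)\ge C_\varepsilon^{-1}\tau_{\mathbf{p}}(n,g)$.} This reduces to the upper bound above.
1. Conversely, peel an internal triangle incident to the boundary of a triangulation of the $\mathbf{p}$-gon. This lengthens the boundary but changes the vertex and triangle counts, so I would compose it with the vertex-count comparison already proved.
2. Alternatively, split the boundary root edge by inserting a boundary vertex of degree $3$. This injects $\mathcal{T}_{\mathbf{p}}(n,g)$ into the $(p_1+1)$-gon triangulations with parameter $n+1$.
3. Conclude using $\tau_{\mathbf p'}(n+1,g)\le C\tau_{\mathbf p'}(n,g)$.

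\textbf{Main obstacle.} The hard step is guaranteeing that the surgery in the upper bound $\tau_{\mathbf{p}}(n+1,g)\le C_\varepsilon\,\tau_{\mathbf{p}}(n,g)$ stays within type-I triangulations of the same genus, with vertex-simple, vertex-disjoint boundaries. Removing a vertex that carries loops could change the topology. I would handle this by restricting to low-degree internal vertices whose star is a simple disc, and I expect showing that a positive fraction of vertices qualify to be the delicate part.
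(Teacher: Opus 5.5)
Your two easy inequalities are fine. Doubling the root edge of $\partial_1$ and placing a degree-$2$ vertex in the resulting $2$-gon gives $\tau_{\mathbf p}(n,g)\le\tau_{\mathbf p}(n+1,g)$. Gluing a triangle onto the root edge of $\partial_1$ and the next boundary edge gives $\tau_{(p_1+1,p_2,\dots,p_\ell)}(n,g)\le\tau_{\mathbf p}(n,g)$; no vertex-simplicity issue arises there, since the new boundary is the old one with one vertex removed. Reducing the remaining perimeter bound to the bound in $n$, by inserting a degree-$3$ vertex on the boundary root edge, is also fine, up to the harmless fact that $|\mathbf p|+1$ may slightly exceed $\varepsilon n$.

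\textbf{The gap.} It is in the only non-trivial step, $\tau_{\mathbf p}(n+1,g)\le C_\varepsilon\tau_{\mathbf p}(n,g)$. Deleting a vertex $v$ and fanning its link polygon is only defined when $v$ is off the boundary, carries no loop, and has degree at least $2$ (multiple edges are harmless). Your double counting then needs \emph{every} $t\in\mathcal{T}_{\mathbf p}(n+1,g)$ to contain $\gtrsim\varepsilon n$ such vertices of degree at most $K$. You leave this open, and it is false for type-I triangulations. Start from any triangulation in the range of the lemma. At a corner of each internal vertex $w$, insert a loop based at $w$ enclosing a new vertex joined only to $w$, and retriangulate the resulting $4$-gon with one diagonal. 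Each insertion preserves $g$ and $\mathbf p$ and adds one vertex and two triangles, so you stay in the range of the lemma. Afterwards, every internal vertex either carries a loop or has degree $1$. Markov's inequality still yields $\gtrsim\varepsilon n$ internal vertices of degree at most $K$, but none of them admits your surgery. Such $t$ therefore contribute nothing to the double counting, and you give no averaged substitute.

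\textbf{A possible repair.} What is missing is a surgery available at every internal vertex of low degree. One way is to contract rather than delete.
By connectivity, an internal vertex $v$ always has a non-loop incident edge $vw$. If $\deg v\ge 2$, you can choose $w$ with $\deg w\ge 2$ once the map is large, so that the two sides of $vw$ are distinct internal triangles. Contracting $vw$ and collapsing these two triangles into edges:
\begin{itemize}
\item removes one vertex, two internal triangles and three edges;
\item preserves the genus;
\item leaves every boundary with the same vertex sequence, because $v$ is internal.
\end{itemize}
It is undone by splitting the merged vertex, which is encoded by an oriented edge of the image and an integer at most $K$. This gives the $O(Kn)$ multiplicity you want.
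If $\deg v=1$, then $v$ sits inside a loop at $w$. In that case, delete $v$, its edge and the loop, and close up the resulting $2$-gon.
With these two operations your vertex count goes through: there are at least $8\varepsilon n$ vertices, at most $\varepsilon n$ of them on the boundary, and at least half of them have degree at most $K$.
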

The next Lemma allows us to compare volumes $\tau_{(p_1,\cdots,p_{\ell})}(n,g)$ with volumes $\tau(n,g)$ where we remove boundary components. 
\begin{lemma}\label{add_new_boundary}
    For any $n,g \ge 0$ such that $n \ge 2g -1$ and any $\mathbf{p} = (p_1,\cdots,p_{\ell}) \in \mathbb{N}^{\ell}$, we have 
    \begin{align*}
        \tau_{(p_1,\cdots,p_{\ell})}(n,g) \le (6n)^{\ell-1}\tau(n,g).
    \end{align*}
\end{lemma}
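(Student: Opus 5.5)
We prove Lemma~\ref{add_new_boundary} by induction on $\ell$, building an injection that removes one boundary at a time while keeping $n$ and $g$ fixed. The case $\ell = 1$ needs some care, because $\tau_{(p_1)}(n,g)$ and $\tau(n,g)$ are not obviously comparable: a triangulation of the $p_1$-gon with $2n-(p_1-2)$ internal triangles has the same vertex count $n+2-2g$ as a triangulation in $\mathcal{T}(n,g)$. The natural map fills the boundary face of degree $p_1$ with a triangulation. Every $p$-gon can be triangulated using $p-2$ triangles and no new vertex, for instance by a fan from the root vertex of the boundary. Filling $\partial_1$ with the fan produces a closed triangulation with $2n-(p_1-2)+(p_1-2)=2n$ faces, the same vertex set, and the same genus, so it lies in $\mathcal{T}(n,g)$.

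To make this injective, we keep the root on the former boundary root edge $e_1$ and check that the fan filling can be reversed. The obstacle is that, from the filled triangulation alone, we cannot see which triangles were part of the fan. To get around this, we aim for the bound $\tau_{\mathbf{p}}(n,g)\le (6n)^{\ell-1}\tau(n,g)$ with the factor $(6n)^{\ell-1}$ paying for this lost information. Concretely, the plan is the following.

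\begin{enumerate}
\item For $\ell=1$, we want $\tau_{(p)}(n,g)\le \tau(n,g)$ with no extra factor. We use a different injection: instead of a fan, we glue the $p$-gon boundary shut by a zipping procedure that identifies boundary edges in pairs, or we use a known monotonicity. If neither works, we instead cite Lemma~\ref{bounded_ratio_vertices}-type monotonicity in $p$, namely $\tau_{(p+1)}\le \tau_{(p)}$, iterated down to $p=1$ or $p=2$. A $2$-gon can be closed by gluing its two edges, and a $1$-gon, a loop, can be closed by a degenerate triangle, which changes the count by one; we then adjust using $\tau(n,g)\le\tau(n+1,g)$.
\item For the inductive step from $\ell$ to $\ell-1$, we merge boundary $\partial_\ell$ into $\partial_1$. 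We add one edge, or a triangle, joining the root vertex of $\partial_\ell$ to the root vertex of $\partial_1$ inside the surface, which fuses the two faces into one face of degree $p_1+p_\ell+2$ without changing the genus. This requires the faces to be adjacent through a region we control, so we realize it by a direct surgery on the boundary faces, which are genuine faces, rather than through the interior.
\item To reverse the surgery, we must record the position of the added edge, which costs at most the number of oriented edges, $6n$. We also restore the normalization of the internal triangle count by using the monotonicity of Lemma~\ref{bounded_ratio_vertices} in the perimeter.
\end{enumerate}

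The main obstacle is keeping $n$ exactly fixed while changing the perimeters. The triangle count $2n-\sum_i(p_i-2)$ shifts under merging, because the merged perimeter is $p_1+p_\ell+2$, and we need to compensate using the monotonicity inequalities rather than the constants $C_\varepsilon$. We would handle this first by choosing the surgery so that the vertex count is preserved, and then checking the triangle count by Euler's formula.
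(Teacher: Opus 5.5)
\textbf{Gap in the inductive step.} This step carries all the content of the lemma, and as written it does not work. An edge added to a map lies inside a single face, so it cannot fuse the two distinct faces $\partial_1$ and $\partial_\ell$. To realise your operation you must attach a tube between them. Then $V'=V$, $E'=E+1$, $F'=F-1$, so the Euler characteristic drops by $2$ and the genus goes up by one, contrary to your claim. Moreover, the new face of degree $p_1+p_\ell+2$ passes twice through $\rho_1$ and twice through $\rho_\ell$, so it is not a vertex-simple boundary as required in Definition~\ref{triangulation_with_holes}.

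The genus-preserving way to merge two boundaries is to cut along an edge joining them, as in Case~2 of the proof of Proposition~\ref{second_moment_bounded_degree}. That cut splits both endpoints and lands in $\mathcal{T}_{(p_1+p_\ell+2,p_2,\ldots,p_{\ell-1})}(n+2,g)$. In every variant $n$ moves, and nothing brings it back for free. The inequality $\tau_{\mathbf q}(n,g)\le\tau_{\mathbf q}(n+1,g)$ points the wrong way. The reverse bound in Lemma~\ref{bounded_ratio_vertices} costs a constant $C_\varepsilon>1$ and needs $g/n\le\frac12-4\varepsilon$ and $|\mathbf q|\le\varepsilon n$, whereas the lemma is claimed for all $n\ge 2g-1$ and all $\mathbf p$. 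It is also applied in that generality in Proposition~\ref{multicurves_few_boundaries}.

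For the same reason you cannot cite Lemma~\ref{bounded_ratio_vertices} for monotonicity in the perimeter. The case $\ell=1$, $p_1=1$ also fails: capping the loop by a triangle creates a vertex, so it only gives $\tau_{(1)}(n,g)\le\tau(n+1,g)$. The inequality $\tau(n,g)\le\tau(n+1,g)$ cannot turn this into a bound by $\tau(n,g)$. The obstacle you name in your last paragraph is exactly this shift in $n$, and it is left unresolved.

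\textbf{What works.} Use the idea you only apply for $\ell=1$: close each boundary separately instead of merging boundaries.
\begin{itemize}
\item \emph{Monotonicity.} Glue a triangle onto two consecutive edges $u\to v\to w$ of $\partial_j$ and reroot $\partial_j$ at the new edge $u\to w$. This lowers $p_j$ by one and adds one internal triangle but no vertex, so $(n,g)$ is unchanged. The map is injective because the added triangle is the one behind the new root. This proves $\tau_{(\ldots,p_j+1,\ldots)}(n,g)\le\tau_{(\ldots,p_j,\ldots)}(n,g)$ with no hypothesis.
\item \emph{Zipping a $2$-gon.} Once $p_j=2$, the two vertices of $\partial_j$ are distinct, so gluing its two sides identifies no vertices. $V$ is unchanged and $E,F$ each drop by one, so $g$ and $n$ are unchanged. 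You land in $\mathcal{T}_{\mathbf p'}(n,g)$, where $\mathbf p'$ is $\mathbf p$ with $p_j$ removed, together with one marked oriented edge (the zipped one). The original map is recovered from this mark because $p_j$ is fixed.
\item \emph{$1$-gons.} If $p_j=1$, use the root transformation: remove the loop and the triangle behind it, then glue that triangle's two remaining sides. For $\ell=1$ this gives $\tau_{(1)}(n,g)=\tau(n,g)$.
\item \emph{Assembling.} Close $\partial_\ell,\ldots,\partial_2$ this way, each with a mark, and then close $\partial_1$, whose mark becomes the root. This gives $\ell-1$ factors.
\end{itemize}
To get exactly $6n$ per factor, count the marks in the final closed triangulation, which has exactly $6n$ oriented edges. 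The intermediate maps have $6n-2|\mathbf q|+6k$ oriented edges when the remaining perimeters are $\mathbf q=(q_1,\ldots,q_k)$, and this can exceed $6n$ when some $q_i\in\{1,2\}$. Marks created by zipping are never altered by later closures; marks created by the root transformation need a little extra bookkeeping.
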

 \subsection{Isoperimetric inequalities}\label{section_isoperimetric_inequalities}

We introduce the notion of \emph{multicurve}. Given a triangulation $t$, a multicurve is a family $(\gamma_1,\ldots,\gamma_{\ell})$ of simple cycles satisfying the following properties:
\begin{itemize}
\item[$\bullet$] the cycles $\gamma_i$ are edge-disjoint;
\item[$\bullet$] no two cycles cross each other, that is, for any vertex $v$ and any four distinct edges $e_1,e_2,e_3,e_4$ incident to $v$ in this cyclic order, there do not exist two cycles $\gamma_i,\gamma_j$ such that $\gamma_i$ uses $e_1,e_3$ and $\gamma_j$ uses $e_2,e_4$.
\end{itemize}
Furthermore, for $k_1+k_2=2n$, we say that the multicurve is \emph{$(k_1,k_2)$-separating} if the complement
$t \setminus \{\gamma_1 \cup \cdots \cup \gamma_{\ell}\}$
consists of two face-connected components with respectively $k_1$ and $k_2$ triangles.

We now state an isoperimetric inequality proved in \cite{budzinski2023distancesisoperimetricinequalitiesrandom}.

\begin{theorem}\label{isoperimetric_inequalities}
Let $\frac{g_n}{n} \to \theta \in (0,\frac{1}{2})$. There exist constants $K_{\theta},\ct>0$ such that, with high probability, for all $K_{\theta}\log(n) \le k_1 \le k_2$ satisfying $k_1+k_2=2n$, the map $\Tng$ does not contain a multicurve of total length $L \le \ct k_1$ that is $(k_1,k_2)$-separating.
\end{theorem}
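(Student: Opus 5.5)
The statement is Theorem~\ref{isoperimetric_inequalities}, quoted from \cite{budzinski2023distancesisoperimetricinequalitiesrandom}. In this paper we simply invoke it. If I had to prove it, I would use a first moment (union bound) argument over all separating multicurves. Fix $k_1 \le k_2$ with $k_1+k_2=2n$, a total length $L \le \ct k_1$ and a number of cycles $\ell \le L$. Let $N(k_1,L,\ell)$ be the expected number of $(k_1,k_2)$-separating multicurves of length $L$ with $\ell$ components in $\Tng$. The goal is to show that
\begin{align*}
\sum_{k_1 \ge K_\theta \log n}\ \sum_{L \le \ct k_1}\ \sum_{\ell \le L} N(k_1,L,\ell) \xrightarrow[n\to+\infty]{} 0 .
\end{align*}

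\textbf{Step 1: cut along the multicurve.} Cutting $\Tng$ along $\gamma_1,\dots,\gamma_\ell$ produces two triangulations of multi-polygons $t_1,t_2$. Each has perimeter vector of total length $L$, with $k_1$ and $k_2$ internal triangles respectively. Their genera satisfy $g_1+g_2+\ell-1=g$ by Euler's formula. The map $\Tng$ is recovered from $(t_1,t_2)$ by regluing along the boundaries, together with the root. I would bound the number of gluing choices and root positions crudely by $C^L\, n^{O(\ell)}$: one has to choose how boundary components are matched and a rotation for each. This gives
\begin{align*}
N(k_1,L,\ell) \le \frac{C^L n^{O(\ell)}}{\tau(n,g)} \sum_{\mathbf{p},\mathbf{p}'} \sum_{g_1+g_2=g+1-\ell} \tau_{\mathbf{p}}(n_1,g_1)\,\tau_{\mathbf{p}'}(n_2,g_2),
\end{align*}
where $n_1,n_2$ are determined by $k_1,k_2$ and the perimeters.

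\textbf{Step 2: remove the boundaries.} Lemma~\ref{add_new_boundary} and Lemma~\ref{bounded_ratio_vertices} let me trade the boundaries for vertices and collapse the extra boundary components. This costs at most a further factor $C^L (6n)^{\ell}$. I am then left with sums of the form $\sum \tau(n_1',g_1)\tau(n_2',g_2)$ with $n_1'+n_2' \approx n + O(L)$. Lemma~\ref{bound_separating}, iterated, compares this to $\tau(n+O(L),g+1-\ell)$. The key point is that one side has only $k_1$ triangles, so its genus is at most about $k_1/4$, and the genus "lost" in the cut, $\ell-1$, must be paid for.

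\textbf{Step 3 (main obstacle): exponential decay in $k_1$.} The crux is a sharp comparison between $\tau(n_1,g_1)\tau(n_2,g_2)$ and $\tau(n,g)$. I would use the known asymptotics $\tau(n,g) \approx n^{2g} e^{f(g/n) n}$. With these, putting $k_1$ triangles and genus $g_1$ into the small piece costs a factor roughly $(k_1/n)^{2g_1} e^{-c k_1}$ relative to the big piece. This factor must beat:
\begin{itemize}
\item the polynomial factors $n^{O(\ell)}$, which are controlled because each lost handle gives a factor $n^{-2}$ from $n^{2g}$;
\item the entropy $C^{2L} \le C^{2\ct k_1}$, which is absorbed by choosing $\ct$ small compared to $c$.
\end{itemize}
The result is $N(k_1,L,\ell) \le e^{-c' k_1}$ uniformly. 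The assumption $k_1 \ge K_\theta\log n$ is exactly what is needed to sum over the polynomially many values of $(k_1,L,\ell)$ and the leftover polynomial factors. I expect the uniform control of the ratio $\tau(n_1,g_1)\tau(n_2,g_2)/\tau(n,g)$ in the regime $k_1$ small but $\gg \log n$ to be the hardest part. I would first try to derive it from the recursive bounds of Lemma~\ref{bounded_ratio_vertices} applied step by step.
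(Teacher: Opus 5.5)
Quoting the result is exactly what the paper does: Theorem~\ref{isoperimetric_inequalities} is imported from \cite{budzinski2023distancesisoperimetricinequalitiesrandom} without proof. Your self-contained sketch, however, has a genuine gap.

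Its architecture is the natural one and mirrors the proof of Proposition~\ref{multicurves_few_boundaries}: a first moment over cut configurations, a factor $n^{-2}$ per lost handle, $C^{L}$ absorbed by taking $\ct$ small, and $k_1\ge K_\theta\log n$ used against polynomial factors. But Step~3 carries the entire content of the theorem and is only asserted. Moreover, Step~2 as written destroys what Step~3 needs. Once you iterate Lemma~\ref{bound_separating} to pass to $\tau(n+O(L),g+1-\ell)$, the information that one side has only $k_1$ triangles is gone. Lemma~\ref{bounded_ratio_vertices} and $\tau(n,g)\ge n^2\tau(n-2,g-1)$ then give a bound of order $C^{L}n^{-2(\ell-1)}$ times your prefactors, with no decay in $k_1$. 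This is exactly the computation in the proof of Proposition~\ref{multicurves_few_boundaries}, which ends with $e^{C_\theta k_1}n^{1-\varepsilon(s-1)}$. That bound is useful there only because $s>k_1/\log\log n$; for a single separating cycle it gives $e^{C_\theta k_1}n$. You have to keep the product $\tau_{\mathbf p}(n_1,g_1)\tau_{\mathbf p'}(n_2,g_2)$ and estimate each factor sharply.

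There is also a bookkeeping issue. Since $\ell$ may be of order $\ct k_1$, the powers of $n$ must cancel up to an $\ell$-independent power, as they do in $(6n_1)^{\ell-1}(6n_2)^{\ell-1}n^{-2(\ell-1)}$. A loose factor $n^{O(\ell)}$, as in your Steps~1--2, cannot be absorbed by $e^{-c'k_1}$.

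The proposed source of decay also fails. Lemma~\ref{bounded_ratio_vertices} (applied step by step or not), Lemmas~\ref{bound_separating} and~\ref{add_new_boundary}, and the Goulden--Jackson bound all hold verbatim for $g=0$. There the conclusion is false: in large planar triangulations the hull of radius $r$ contains of order $r^4$ triangles but has perimeter of order $r^2$. So an argument built on these estimates in the way you describe would apply equally to planar maps and prove too much.

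The factor $e^{-ck_1}$ has to come from hyperbolicity. The number of small planar-like pieces grows like $\lambda_c^{-1}$ per inner vertex, times $C^{L}$ for the boundary and corrections for handles. Meanwhile each vertex removed from the big side gains a factor close to $\lambda(\theta)$, and $\lambda(\theta)<\lambda_c$ for $\theta>0$. Turning this into a proof requires sharp ratio asymptotics of the type $\tau(n-1,g_n)/\tau(n,g_n)\to\lambda(\theta)$, valid uniformly over the $O(k_1)$ steps needed. The constant-factor bounds $1\le\tau_{\mathbf p}(n+1,g)/\tau_{\mathbf p}(n,g)\le C_\varepsilon$ are not enough.

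The global asymptotic $\tau(n,g)=n^{2g}e^{f(g/n)n+o(n)}$ that you invoke does not replace this. Its $e^{o(n)}$ error swamps $e^{-ck_1}$ as soon as $k_1=o(n)$. It can therefore at best treat $k_1\ge\varepsilon n$, and leaves open the range $K_\theta\log n\le k_1\le\varepsilon n$, which is the heart of the statement.
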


For the rest of the paper, we fix constants $K_{\theta},\ct$ given by the above theorem. In words, the previous theorem states that if a multicurve separates the surface into two connected components of size larger than $K_{\theta}\log(n)$, then the total length of the multicurve is of the same order as the size of the smallest connected component.

However, it is shown in \cite{budzinski2023distancesisoperimetricinequalitiesrandom} that, with high probability, there exist regions of size of order $\log(n)$ that are separated from the rest of the surface by a multicurve of total length $o(\log(n))$. Nevertheless, the total mass of these regions is negligible. We now make this statement more precise.

\begin{definition1}
Let $\beta > 0$. Let $g,n > 0$ such that $n \ge 2g-1$. Let $t \in \mathcal{T}(n,g)$ and let $f$ be a face of $t$. We say that $f$ is \emph{$\beta$-isolated} if there exists a $(k_1,k_2)$-separating multicurve $\gamma$ in $t$ such that:
\begin{itemize}
\item[$\bullet$] the face $f$ belongs to the connected component of size $k_1$,
\item[$\bullet$] we have $k_2 \ge \sqrt{n}$,
\item[$\bullet$] the total length $L$ of $\gamma$ satisfies $L \le \beta \min(k_1,k_2)$.
\end{itemize}
\end{definition1}

Let $\mathrm{Isol}_{\beta}(t)$ denote the number of $\beta$-isolated faces in $t$. We then have the following result proved in \cite{budzinski2023distancesisoperimetricinequalitiesrandom}.

\begin{proposition}\label{few_isolated_faces}
Let $\frac{g_n}{n} \to \theta \in (0,\frac{1}{2})$. For any $\eta > 0$, there exists $\beta > 0$ such that, for $n$ large enough,
\begin{align*}
\mathbb{P}\big(\mathrm{Isol}_{\beta}(\Tng) \ge \eta n\big) \le \eta.
\end{align*}
\end{proposition}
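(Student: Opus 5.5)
Since this proposition is imported from \cite{budzinski2023distancesisoperimetricinequalitiesrandom}, here is how I would reprove it with the tools stated above. The plan is to reduce to a statement about the root face and then split according to the size of the isolated region. By rerooting invariance of $\Tng$, we have $\mathbb{E}[\mathrm{Isol}_{\beta}(\Tng)] = 2n\,\mathbb{P}(\text{the root face of } \Tng \text{ is } \beta\text{-isolated})$. By Markov's inequality, it therefore suffices to show that for every $\eta>0$ there is $\beta>0$ with $\mathbb{P}(\text{root face } \beta\text{-isolated}) \le \eta^2/2$ for $n$ large.

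First I remove the large regions using Theorem~\ref{isoperimetric_inequalities}. Take $\beta<\ct$. On the high-probability event of that theorem, any $(k_1,k_2)$-separating multicurve with $L\le \beta\min(k_1,k_2)$ must have $\min(k_1,k_2)<K_{\theta}\log n$. The side containing the other component has $k_2\ge\sqrt n \gg K_\theta \log n$, so the isolated region satisfies $k_1<K_{\theta}\log n$. Hence, up to an $o(1)$ probability, the root face lies in a face-connected region $R$ with $k:=|R|<K_\theta\log n$, cut off by a multicurve of length $L\le \beta k$.

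Next I handle small $k$ trivially. Every separating multicurve has length $L\ge 1$. If we choose $\beta<1/k_0$, the constraint $L\le\beta k$ forces $k>k_0$. So it remains to bound $\mathbb{P}(\exists R\ni \text{root face},\ k_0< |R| < K_\theta \log n,\ |\partial R|\le \beta|R|)$ by $\eta^2/4$, summed over $k$, for a suitable $k_0$.

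This is a first-moment computation and is the main obstacle. For fixed $k$, $L$, number of cycles $\ell$ and genus $g_1$ of $R$, I would do the following.
\begin{enumerate}
\item Count the rooted regions $R$ (triangulations of an $\ell$-multipolygon of total perimeter $L$ with about $k$ faces) by $A^{k}$ for some absolute constant $A$, using $g_1\le k$ and standard exponential bounds.
\item Bound the complement by $\tau_{\mathbf p}(n-k/2+O(L),\,g-g_1-\ell+1)$ and compare it with $\tau(n,g)$ via Lemmas~\ref{bounded_ratio_vertices} and~\ref{add_new_boundary}, together with the high-genus estimate $\tau(n,g-1)/\tau(n,g)\asymp n^{-2}$ that is behind those lemmas.
\end{enumerate}
The key point is Euler's formula. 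A region with $k$ triangles and short boundary $L\le\beta k$ has few vertices unless it has genus or many boundary components: the number of internal vertices is $\frac{k}{2}-\frac{L}{2}+2-2g_1-\ell+\dots$. On the complement side, the missing genus $g_1+\ell-1$ is paid by factors $n^{-1}$ or smaller. Meanwhile the missing $\approx k/2$ faces, together with the $\approx k/2$ vertices they carry, cost only a constant factor per face relative to the PSHT growth; equivalently, the local limit gives a factor $(\text{const})^{k}$ that is strictly smaller than $A^{-k}$ once the boundary is short. Concretely, I expect to show that the total contribution of a given $k$ is at most $(C\beta)^{c k}+ n^{-1}A^{k}$. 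With $\beta$ small the first term sums over $k>k_0$ to less than $\eta^2/8$. The second term is negligible provided $K_\theta\log A<1$; if this fails, I would refine the count by using that a region with genus $g_1$ has at most $k$ faces, so its genus costs at least $n^{-g_1}$, which beats $A^{k}$ for $k\le K_\theta\log n$. Checking that the exponent bookkeeping in this last step really closes is where I expect the difficulty.
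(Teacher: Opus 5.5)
The paper gives no proof of this proposition; it imports it from \cite{budzinski2023distancesisoperimetricinequalitiesrandom}. Your reductions are sound. Rerooting and Markov's inequality reduce the statement to the root face. Theorem~\ref{isoperimetric_inequalities} with $\beta<\ct$ forces the isolated side to have $k_1<K_{\theta}\log(n)$, since the side with $k_2\ge\sqrt n$ cannot be the smaller one. Finally, $L\ge 1$ forces $k_1\ge 1/\beta$.

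The gap is the first-moment bound for $1/\beta\le k<K_{\theta}\log(n)$, which is the entire content of the statement, and your sketch of it does not close. Already for a planar region with one boundary of length $L$, there are of order $C^{L}\lambda_c^{-k/2}$ such regions, so a count $A^{k}$ with an unspecified constant $A$ is useless. What you need is that the complement count decays at a strictly faster exponential rate: $\tau_{L}(n-\frac{k-L}{2}-1,g_n)/\tau(n,g_n)\le C'^{L}(\lambda(\theta)+o(1))^{k/2}$ uniformly in $k\le K_{\theta}\log(n)$, together with the strict inequality $\lambda(\theta)<\lambda_c$. This is exactly where $\theta>0$ enters.

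The lemmas you invoke give nothing of this kind. Lemma~\ref{bounded_ratio_vertices} only yields $\tau_{\mathbf p}(m-1,g)\le\tau_{\mathbf p}(m,g)$, i.e.\ no decay at all. Compare the proof of Proposition~\ref{multicurves_few_boundaries}: there these same lemmas leave a loss $e^{C_{\theta}k_1}$, which is beaten only because $s>k_1/\log(\log(n))$ boundary components each pay $n^{-\varepsilon}$. A disk with one short boundary has no such gain.

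Your sentence that ``the local limit gives a factor $(\mathrm{const})^{k}$ strictly smaller than $A^{-k}$'' is precisely the missing statement, asserted rather than proved. Making it rigorous requires uniform ratio asymptotics $\tau_{\mathbf p}(m-1,g)/\tau_{\mathbf p}(m,g)\to\lambda(\theta)$ for $|\mathbf p|=O(\log n)$, matched against the sharp planar growth rate $\lambda_c^{-1}$. Note also that the decay is not of the form $(C\beta)^{ck}$. The disk contribution is of order $C^{\beta k}(\lambda(\theta)/\lambda_c)^{k/2}$: smallness comes from the gap, and $\beta$ only needs to be small enough that $C^{\beta}$ does not destroy it.

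The treatment of genus also fails as written. The count $A^{k}$ is false once $g_1$ is unrestricted. Triangulations with $k$ faces and genus $g_1$ number of order $k^{2g_1}e^{O(k)}$ when $g_1$ is proportional to $k$, hence $k^{\Theta(k)}$ in total. Your fallback does not repair this. Removing genus $g_1$ from the complement costs $n^{-2g_1}$ (via $\tau(m,h)\ge m^2\tau(m-2,h-1)$), which is better than your $n^{-g_1}$. But against a net exponential loss $A^{k}$ with $k$ up to $K_{\theta}\log(n)$, this gives $n^{K_{\theta}\log A-2g_1}$. That is not small for $g_1=1$ unless $K_{\theta}\log A<2$, and $K_{\theta}$ is an uncontrolled constant from Theorem~\ref{isoperimetric_inequalities}.

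To close the argument you need a uniform enumerative bound: genus-$g_1$ regions must grow at the planar exponential rate up to a factor $(Ck)^{2g_1}$. Then the gap $\lambda(\theta)<\lambda_c$ again absorbs the exponential part, and $(Ck/n)^{2g_1}$ pays for the genus. The same bookkeeping applies to the factor $n^{-2(\ell-1)}$ gained from $\ell$ boundary components. Without these two quantitative enumerative inputs, the proposal is a heuristic for why the statement should hold, not a proof.
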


To conclude this subsection, we state a proposition that will be useful in the remainder of the paper. This proposition is necessary to ensure that the complement of large subsets of $\Tng$ has \textquotedbl large boundaries\textquotedbl\ which allows one to apply \cite[Theorem 1.1]{lions2026locallimitsuniformtriangulations}

\begin{proposition}\label{multicurves_few_boundaries}
Let $\frac{g_n}{n} \to \theta \in (0,\frac{1}{2})$ and let $\varepsilon > 0$. With high probability, any multicurve $(\gamma_1,\ldots,\gamma_s)$ that is $(k_1,k_2)$-separating with $k_1 \le k_2$, $k_1+k_2=2n$ and
$K_{\theta}\log(n) < k_1 \le n^{1-\varepsilon}$
satisfies
\[
s \le \frac{k_1}{\log(\log(n))}.
\]
\end{proposition}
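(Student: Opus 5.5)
We prove Proposition~\ref{multicurves_few_boundaries} by a first moment argument over separating configurations, in the spirit of the proof of Theorem~\ref{isoperimetric_inequalities} in \cite{budzinski2023distancesisoperimetricinequalitiesrandom}.

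\textbf{Reduction to a count.} Cutting $\Tng$ along a $(k_1,k_2)$-separating multicurve $(\gamma_1,\dots,\gamma_s)$ of total length $L$ produces two triangulations of multi-polygons. The first is $t_1$, with $k_1$ triangles, some genus $g_1$, and perimeter vector $\mathbf{p}=(p_1,\dots,p_s)$. The second is $t_2$, with $k_2$ triangles, genus $g_2$, and the same perimeter vector. Euler's formula gives $g_1+g_2+s-1=g$. Conversely, $\Tng$ is recovered from $(t_1,t_2)$ by gluing the boundaries pairwise. This requires choosing the matching of the $s$ boundaries and the rotations, plus a root, giving at most $s!\prod_i p_i\cdot 6n$ choices up to symmetry. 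Hence the expected number of such multicurves with prescribed $(k_1,\mathbf{p},g_1)$ is at most
\[
\frac{6n\, s!\,\prod_i p_i\;\tau_{\mathbf{p}}(\cdot,g_1)\,\tau_{\mathbf{p}}(\cdot,g_2)}{\tau(n,g)} .
\]
We then sum over all $k_1\in(K_\theta\log n,n^{1-\varepsilon}]$, all $s> k_1/\log\log n$, all $\mathbf{p}$, and all $g_1$, and aim to show the total is $o(1)$.

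\textbf{Bounding the summand.} We use Theorem~\ref{isoperimetric_inequalities}: with high probability we may restrict to $L\ge \ct k_1\ge s\,\ct\log\log n$. This constraint alone does not kill the large-$s$ terms, so we need the genus deficit.

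For the small piece, $t_1$ has $k_1$ triangles and $s$ boundaries. Its number of faces is $k_1+s$, and its vertex count is at least $L/\text{const}$. Crucially, the $s$ boundaries each cost a factor of $(6n)^{-1}$ roughly, via Lemma~\ref{add_new_boundary} used in reverse. Concretely, we bound $\tau_{\mathbf{p}}(\cdot,g_2)$ for the large piece by $(6n)^{s-1}\tau(\cdot,g_2)$ using Lemma~\ref{add_new_boundary}. The ratio $\tau(n-k_1',g-g_1-s+1)/\tau(n,g)$ is then controlled using the known asymptotics
\[
\tau(n,g)\approx n^{2g}e^{nf(g/n)}\quad\text{(Budzinski--Louf type)}.
\]
Lowering the genus by $g_1+s-1$ gains a factor of about $n^{-2(g_1+s-1)}$, while the size change by $k_1$ loses only $C^{k_1}$, as in Lemma~\ref{bounded_ratio_vertices}. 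For the small piece, we use the crude bound $\tau_{\mathbf{p}}(k_1,g_1)\le C^{k_1}k_1^{2g_1}\,(\text{poly in }k_1)^s$.

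Altogether, each term is at most
\[
C^{k_1}\,s!\,(6n)^{s}\,\prod p_i\; n^{-2(s-1)}\,(k_1/n)^{2g_1}.
\]
Since $s!\le k_1^s\le n^{(1-\varepsilon)s}$, $\prod p_i\le (L/s)^s$, and $L\lesssim k_1$ (a multicurve on $t_1$ has length at most $3k_1$), this is at most
\[
\Big(C\, n^{1-\varepsilon}\cdot n\cdot n^{-2}\cdot \tfrac{k_1}{s}\Big)^{s}\,C^{k_1}.
\]
When $s\ge k_1/\log\log n$, the factor $k_1/s$ is at most $\log\log n$, so the bracket is at most $n^{-\varepsilon}\log\log n$. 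The product is then at most
\[
C^{k_1}\,n^{-\varepsilon s/2}\le C^{k_1}\,n^{-\varepsilon k_1/(2\log\log n)},
\]
which is $e^{-k_1\cdot\omega(1)}$. Summing over $\mathbf{p}$ (at most $\binom{L}{s}\le 2^{3k_1}$ compositions), over $L\le 3k_1$, over $g_1$, and over $k_1\ge K_\theta\log n$ gives $o(1)$.

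\textbf{Main obstacle.} The delicate step is the ratio $\tau(n-k_1,g-g_1-s+1)/\tau(n,g)\lesssim C^{k_1}n^{-2(g_1+s-1)}$ uniformly in $g_1,s$. This needs the precise genus asymptotics of $\tau(n,g)$, namely that each unit of genus costs a factor of order $n^2$ uniformly while $g/n$ stays in a compact subset of $(0,1/2)$. Lemma~\ref{bounded_ratio_vertices} alone does not give this. We would try to import it from \cite{budzinski2023distancesisoperimetricinequalitiesrandom}, where exactly this estimate is used to prove Theorem~\ref{isoperimetric_inequalities}. Failing that, we would iterate a one-handle comparison of the form $\tau(n,g-1)\le Cn^{-2}\tau(n,g)$, obtained from the Goulden--Jackson recursion.
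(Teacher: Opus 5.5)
Your scheme is the paper's: a first moment over cut configurations, Lemma~\ref{add_new_boundary} to pay for the boundaries, a deficit of $s-1$ handles worth $n^{-2}$ each, Lemma~\ref{bounded_ratio_vertices} for the $O(k_1)$ shift in size, and at most $8^{k_1}$ length vectors. However, the count itself contains an error exactly where the margin is tight.

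Elements of $\mathcal{T}_{\mathbf{p}}$ already carry labelled, rooted boundaries. Once the cycles are ordered and given canonical roots, the map $(t,\gamma)\mapsto(t_1,t_2,e)$ is injective and there is no gluing multiplicity, so your factor $s!\prod_i p_i$ is spurious. On the other hand, the small piece really does cost a factor of order $k_1$ per boundary: Lemma~\ref{add_new_boundary} gives $(6n_1)^{s-1}$, and you wrote $(\text{poly in }k_1)^s$ yourself. That factor then silently disappears in your ``Altogether'' line.

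If you keep both factors, each extra boundary costs about $k_1\cdot s\cdot\frac{k_1}{s}\cdot n\cdot n^{-2}=k_1^2/n$. This tends to infinity for $k_1$ near $n^{1-\varepsilon}$ whenever $\varepsilon<\frac12$, so the argument as written does not close. Your final bound has the right size only because the two mistakes offset: $s!\prod_i p_i\le(3k_1)^s$ is comparable to $(6n_1)^{s-1}$. The correct budget per boundary is $(6n_1)(6n_2)n^{-2}\le 144\,n^{-\varepsilon}$, which leaves no room for an extra factor polynomial in $k_1$.

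The step you call the main obstacle is elementary. The paper also avoids your unproved small-piece estimate $\tau_{\mathbf{p}}(k_1,g_1)\le C^{k_1}k_1^{2g_1}(\cdots)^s$, whose genus dependence is not available in the paper. After applying Lemma~\ref{add_new_boundary} to both pieces, the paper merges them with Lemma~\ref{bound_separating}:
\[
\sum_{g_1+g_2=g_n-s+1}\tau(n_1,g_1)\tau(n_2,g_2)\le\tau(n_1+n_2+1,g_n-s+1).
\]
So the split of the genus never has to be controlled. Keeping only the first term of the Goulden--Jackson recursion gives $\tau(m,g)\ge m^2\tau(m-2,g-1)$. Iterating this $s-1$ times produces the factor $n^{-2(s-1)}$, and Lemma~\ref{bounded_ratio_vertices} absorbs the resulting size shift $|\vec{\ell}|+2s-1\le 9k_1$ as $C^{9k_1}$. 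No Budzinski--Louf asymptotics are needed; your fallback is exactly this.

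With these two corrections your argument becomes the paper's. Each term is then bounded by $\exp\bigl(C_{\theta,\varepsilon}k_1-\varepsilon k_1\log(n)/\log(\log(n))\bigr)$, and the sum over $k_1$ and $\vec{\ell}$ is $o(1)$.
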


\begin{proof}
The proof is similar to \cite[Lemma 8]{budzinski2023distancesisoperimetricinequalitiesrandom}.
Let $N_n$ denote the number of multicurves $(\gamma_1,\ldots,\gamma_s)$ that are $(k_1,k_2)$-separating with $k_1 \le k_2$ and $k_1+k_2=2n$ and $K_{\theta}\log(n) < k_1 \le n^{1-\varepsilon}$ and satisfying $s > \frac{k_1}{\log(\log(n))}$.

 Let us fix $k_1 \le k_2$ with $k_1+k_2=2n$ and $K_{\theta}\log(n) < k_1 \le n^{1-\varepsilon}$ and $s > \frac{k_1}{\log(\log(n))}$, and $\vec{\ell}=(\ell_1,\ldots,\ell_s)\in (\mathbb{N}^{*})^s$ such that $|\vec{\ell}|:=\ell_1+\cdots+\ell_s \le 3k_1$. We denote by $X(k_1,\vec{\ell})$ the set of pairs $(t,(\gamma_1,\ldots,\gamma_s))$, such that:
\begin{itemize}
\item[$\bullet$] $t \in \mathcal{T}(n,g_n)$.
\item[$\bullet$] $(\gamma_1,\ldots,\gamma_s)$ is $(k_1,k_2)$-separating and with length vector $\vec{\ell}$.
\end{itemize}

We then have
\begin{align}\label{expectation_N}
\mathbb{E}[N_n]
= \tau(n,g_n)^{-1}
\sum_{k_1,s,\vec{\ell}} \#X(k_1,\vec{\ell}).
\end{align}

Fix $k_1,s$ and $\vec{\ell}$, and let us bound $\tau(n,g_n)^{-1}\#X(k_1,\vec{\ell})$. For $(t,(\gamma_1,\ldots,\gamma_s)) \in X(k_1,\vec{\ell})$, let $e$ denote the root edge of $t$. Cutting along $(\gamma_1,\ldots,\gamma_s)$ produces a pair $(t_1,t_2)$ of triangulations with boundaries of length vector $\vec{\ell}$ and $e$ lies in $t_1$ or in $t_2$. We define
\[
\varphi(t,\gamma_1,\ldots,\gamma_s) = (t_1,t_2,e).
\]
The mapping $\varphi$ is injective. Let us introduce $n_i = \frac{1}{2}(k_i + |\vec{\ell}|-2s)$. Recalling that an element of $\mathcal{T}_{\vec{\ell}}(m,h)$ has $2m-|\vec{\ell}|+2s$ internal triangles, we obtain
\begin{align}\label{bound_card_X}
\tau(n,g_n)^{-1}\#X(k_1,\vec{\ell})
\le 6n\,\tau(n,g_n)^{-1}
\sum_{\substack{g_1+g_2=g_n-s+1}}
\tau_{\vec{\ell}}(n_1,g_1)\tau_{\vec{\ell}}(n_2,g_2).
\end{align}

We have $n_1+n_2 \le n+|\vec{\ell}| \le 2n$ and
$n_1 \le \frac{n^{1-\varepsilon}+|\vec{\ell}|}{2} \le 2n^{1-\varepsilon}$. Using Lemma~\ref{add_new_boundary} and then Lemma~\ref{bound_separating} in a very crude way, we bound the right-hand side by
\begin{align*}
&6n \sum_{g_1+g_2=g_n-s+1} (6n_1)^{s-1}(6n_2)^{s-1}\tau(n_1,g_1)\tau(n_2,g_2)\,\tau(n,g_n)^{-1}\\
&\le 6n(12n^{1-\varepsilon})^{s-1}(12n)^{s-1}
\frac{\tau(n+|\vec{\ell}|-2s+1,g_n-s+1)}{\tau(n,g_n)}\\
&\le 6n(12n^{1-\varepsilon})^{s-1}(12n)^{s-1}
\frac{\tau(n+|\vec{\ell}|+1,g_n-s+1)}{\tau(n,g_n)}.
\end{align*}

The Goulden--Jackson recursion formula \cite[Theorem~4]{GOULDEN2008932} yields
\begin{align*}
\tau(n,g)
= \frac{4}{n+1}\bigg(
n(3n-2)(3n-4)\tau(n-2,g-1)
+ \sum_{\substack{i+j=n-2\\ h+k=g}}
(3i+2)(3j+2)\tau(i,h)\tau(j,k)
\bigg),
\end{align*}
from which we deduce that, for $n\ge 2$,
\[
\tau(n,g) \ge n^2 \tau(n-2,g-1).
\]

It follows that
\begin{align*}
\tau(n,g_n)^{-1}\#X(k_1,\vec{\ell})
\le 6n(12n^{1-\varepsilon})^{s-1}(12n)^{s-1}
n^{-2(s-1)}
\frac{\tau(n+|\vec{\ell}|+2s-1,g_n)}{\tau(n,g_n)}.
\end{align*}

Finally, using Lemma~\ref{bounded_ratio_vertices}, together with
$\frac{k_1}{\log(\log(n))} < s \le |\vec{\ell}| \le 3k_1$
and $k_1 \ge K_{\theta}\log(n)$, we obtain
\begin{align*}
\tau(n,g_n)^{-1}\#X(k_1,\vec{\ell})
&\le \exp(C_{\theta}k_1)\,n^{-\varepsilon(s-1)+1}\\
&\le \exp(C_{\theta,\varepsilon}k_1)\,
n^{-\varepsilon \frac{k_1}{\log(\log(n))}}\\
&= \exp\bigg(
C_{\theta,\varepsilon}k_1
-\varepsilon\frac{\log(n)}{\log(\log(n))}k_1
\bigg),
\end{align*}
where we used the fact that
$n^{1+\varepsilon}
= \exp((1+\varepsilon)\log(n))
\le \exp\!\left(\frac{1+\varepsilon}{K_{\theta}}k_1\right)$.
Substituting this bound into \eqref{expectation_N}, we obtain
\begin{align*}
\mathbb{E}[N_n]
\le \sum_{k_1,k_2,s,\vec{\ell}}
\exp\bigg(
C_{\theta,\varepsilon}k_1
-\varepsilon\frac{\log(n)}{\log(\log(n))}k_1
\bigg).
\end{align*}

Fixing $k_1$, the number of $\vec\ell=(\ell_1,\ldots,\ell_s) \in (\mathbb{N}^{*})^s$ with $|\vec\ell|\le 3k_1$ is at most $2^{3k_1} = 8^{k_1}$. Consequently,
\begin{align*}
\mathbb{E}[N_n]
&\le \sum_{k_1=K_{\theta}\log(n)}^{n^{1-\varepsilon}}
8^{k_1}
\exp\bigg(
C_{\theta,\varepsilon}k_1
-\varepsilon\frac{\log(n)}{\log(\log(n))}k_1
\bigg)
\xrightarrow[n\to +\infty]{ } 0.
\end{align*}
This concludes the proof by a first moment argument.
\end{proof}

For any $\varepsilon,\eta,\beta > 0$, let $\mathbf{Iso}_n(\varepsilon,\eta,\beta)$ denote the event on which the conclusion of Theorem~\ref{isoperimetric_inequalities} holds, $\mathrm{Isol}_{\beta}(\Tng) \le \eta n$, and the conclusion of Proposition~\ref{multicurves_few_boundaries} holds. We write $\mathbf{Iso}_n(\varepsilon)$ for the event on which only Theorem~\ref{isoperimetric_inequalities} and Proposition~\ref{multicurves_few_boundaries} hold. Using Theorem~\ref{isoperimetric_inequalities}, Proposition~\ref{few_isolated_faces}, and Proposition~\ref{multicurves_few_boundaries}, we obtain the following corollary.

\begin{corollary}\label{Iso_typical}
Let $\varepsilon,\eta > 0$. There exists $\beta > 0$ such that, for $n$ large enough,
\begin{align*}
\mathbb{P}(\mathbf{Iso}_n(\varepsilon,\eta,\beta)) \ge 1-\eta.
\end{align*}
Moreover $
\mathbb{P}(\mathbf{Iso}_n(\varepsilon)) \underset{n \to +\infty}{\rightarrow} 1$.
\end{corollary}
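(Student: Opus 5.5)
This corollary follows by a union bound over the three high-probability statements listed just before it. Fix $\varepsilon,\eta>0$.

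\textbf{The event $\mathbf{Iso}_n(\varepsilon)$.} This event is the intersection of two events:
\begin{itemize}
\item $A_n$, the event on which the conclusion of Theorem~\ref{isoperimetric_inequalities} holds (with the fixed constants $K_\theta,\ct$);
\item $B_n(\varepsilon)$, the event on which the conclusion of Proposition~\ref{multicurves_few_boundaries} holds.
\end{itemize}
Both results are statements \emph{with high probability}, so $\mathbb{P}(A_n^c)\to 0$ and $\mathbb{P}(B_n(\varepsilon)^c)\to 0$. Hence
\[
\mathbb{P}(\mathbf{Iso}_n(\varepsilon)^c)\le \mathbb{P}(A_n^c)+\mathbb{P}(B_n(\varepsilon)^c)\xrightarrow[n\to+\infty]{}0,
\]
which is the second claim.

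\textbf{The event $\mathbf{Iso}_n(\varepsilon,\eta,\beta)$.} Apply Proposition~\ref{few_isolated_faces} with $\eta/2$ in place of $\eta$. This yields $\beta>0$ such that, for $n$ large enough,
\[
\mathbb{P}\big(\mathrm{Isol}_\beta(\Tng)\ge \tfrac{\eta}{2}n\big)\le \tfrac{\eta}{2}.
\]
On the complementary event we have $\mathrm{Isol}_\beta(\Tng)<\tfrac{\eta}{2}n\le \eta n$. So the failure probability of the condition $\mathrm{Isol}_\beta(\Tng)\le\eta n$ is at most $\eta/2$.

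By the first part, for $n$ large enough, $\mathbb{P}(A_n^c)+\mathbb{P}(B_n(\varepsilon)^c)\le \eta/2$. A union bound then gives
\[
\mathbb{P}(\mathbf{Iso}_n(\varepsilon,\eta,\beta)^c)\le \tfrac{\eta}{2}+\tfrac{\eta}{2}=\eta.
\]

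Only two points need checking. First, $\beta$ is chosen depending on $\eta$ only, which is allowed since the statement says "there exists $\beta$". Second, the threshold "$n$ large enough" is taken as the maximum of the finitely many thresholds coming from the three results. No step is a genuine obstacle.
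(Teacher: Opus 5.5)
Your proof is correct and is essentially the argument the paper intends. The paper gives no written proof; it states the corollary as a direct consequence of Theorem~\ref{isoperimetric_inequalities}, Proposition~\ref{few_isolated_faces} and Proposition~\ref{multicurves_few_boundaries}, which is exactly your union bound. Applying Proposition~\ref{few_isolated_faces} with $\eta/2$ is the right detail, since it leaves room for the vanishing failure probabilities of the other two events so that the total is at most $\eta$.
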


\subsection{Local topology}\label{local_topology}

In this subsection, we introduce different notions of balls of radius $r$. Although these notions are related, they both have drawbacks. We start by defining the strong ball of radius $r$. We give a definition that is slightly more general. Indeed, we define the slightly more general notion of balls centered at a family of oriented edges.

\begin{definition1}\label{strong_ball}
Fix $n,g \ge 0$ such that $n \ge 2g-1$. Also fix $\ell \ge 0$ and $\mathbf{p} = (p_1,\ldots,p_{\ell}) \in (\mathbb{N}^{*})^{\ell}$. Given $t \in \mathcal{T}_{\mathbf{p}}(n,g)$ and a subset $E$ of oriented edges of $t$, for any $r \ge 0$ we define the strong ball $B_r^{+}(t,E)$ of radius $r$ centered at $E$ as the family of triangulations with holes obtained by keeping all faces having a vertex at graph distance at most $r-1$ from the starting point of an edge in $E$.
\end{definition1}

For $(t,e),(t',e') \in \mathcal{T}^1$, we define the strong local distance by
\begin{align}\label{strong_local_topology}
d^{+}_{\mathrm{loc}}((t,e),(t',e'))
= \frac{1}{1+\sup\{r \ge 0 : B_r^{+}(t,e) = B_r^{+}(t',e')\}}.
\end{align}
We denote by $\overline{\mathcal{T}^{1}}^{+}$ the completion of $\mathcal{T}^1$ for $d^{+}_{\mathrm{loc}}$. The drawback of this definition is that Theorem~\ref{local_limit_boundary} does not hold for this distance, since a sequence of triangulations rooted on a boundary whose perimeter tends to infinity cannot converge in this distance.
Indeed, the strong ball contains the entire boundary on which the root edge lies. Therefore, we need another notion of local distance for which Theorem~\ref{local_limit_boundary} holds.

\begin{definition1}
Fix $(t,e) \in \mathcal{T}^1$, and let $\rho$ denote the starting point of $e$. We define the \emph{ball of radius $r$} of $t$ centered at $e$ as the map $B_r(t,e)$ consisting of all edges having at least one endpoint at graph distance at most $r-1$ from $\rho$.
\end{definition1}

We then introduce the \emph{local distance} on $\mathcal{T}^1$ by
\begin{align}\label{distance_locale}
d_{\mathrm{loc}}((t,e),(t',e'))
= \big(1 + \max \{r \ge 0 : B_r(t,e) = B_r(t',e')\}\big)^{-1}.
\end{align}
Its completion, denoted by $\overline{\mathcal{T}^1}$, is a Polish space. However, this space is not compact. The drawback of this definition is that the ball of radius $r$ is not a triangulation with holes in general (see Figure~\ref{ball_local}). Note that $B_r^{+}(t,e)$ is obtained from $B_r(t,e)$ by gluing all the faces incident to an edge in $B_r(t,e)$.

\begin{figure}[H]
\centering
\includegraphics[scale=0.2]{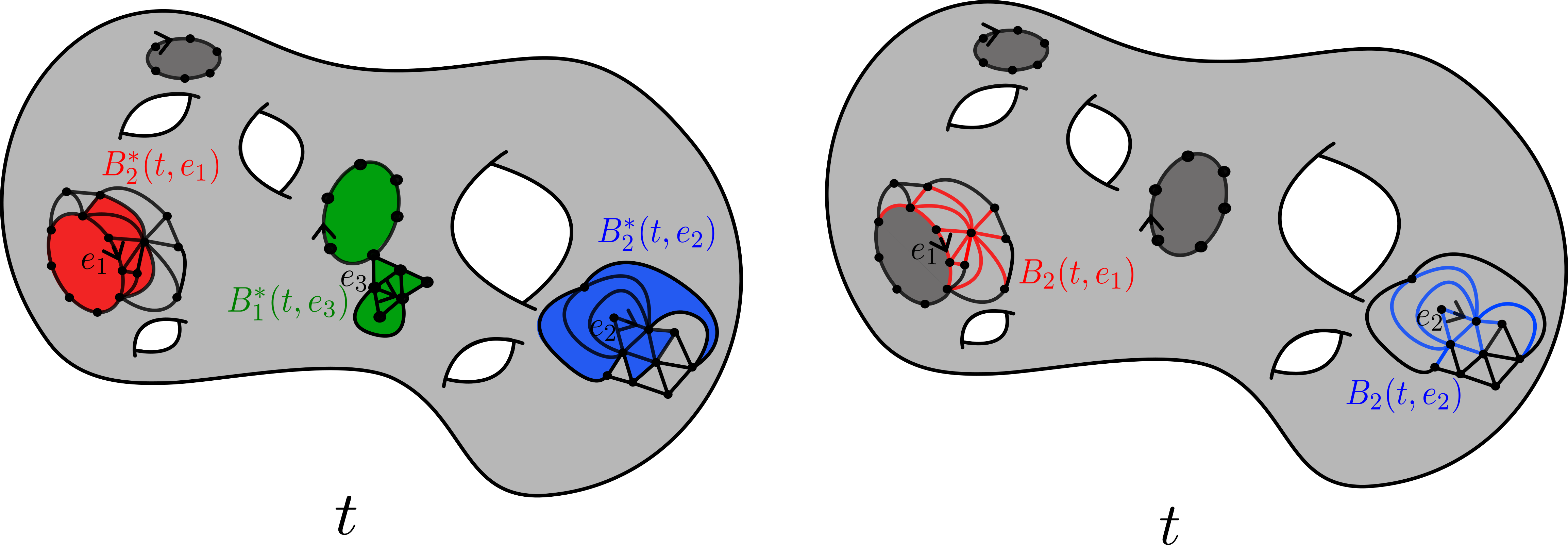}
\caption{We represent a triangulation $t$ of the $\mathbf{p}$-gon. On the left, we represent the strong balls of radius $2$ centered at $e_1$ (in red), $e_2$ (in blue), and $e_3$ (in green). Note that $B_2^{+}(t,e_1)$ contains the boundary on which $e_1$ lies. Moreover, in $B_1^{+}(t,e_3)$, one of the green internal faces shares a vertex with a boundary face, and thus, this boundary face also belongs to $B_2^{+}(t,e_3)$. On the right, we represent in red the ball of radius $2$ centered at $e_1$, which lies on a boundary, and in blue at $e_2$, which lies far from the boundaries. }
\label{ball_local}
\end{figure}

\subsection{Triangulations of the half-plane}

A \emph{triangulation of the half-plane} is a planar triangulation of the $\infty$-gon with finite vertex degrees that is one-ended (see the left part of Figure~\ref{halfplane}). Fix a triangulation with holes $t$ that consists of one infinite boundary, one infinite hole, and a finite number of triangles. We use the shorthand notation
\[
|\partial^{*}t| - |\partial t|
:= |\partial^{*}t \setminus \partial t| - |\partial t \setminus \partial^{*}t|.
\]
In Figure~\ref{halfplane}, it corresponds to the length of the red segment minus the length of the blue segment.
We also denote by $|t|_{\mathrm{in}}$ the number of vertices of $t$ that do not lie on $\partial t$. In Figure~\ref{halfplane}, it corresponds to the number of green vertices.
\begin{figure}[H]
\centering
\includegraphics[scale=0.18]{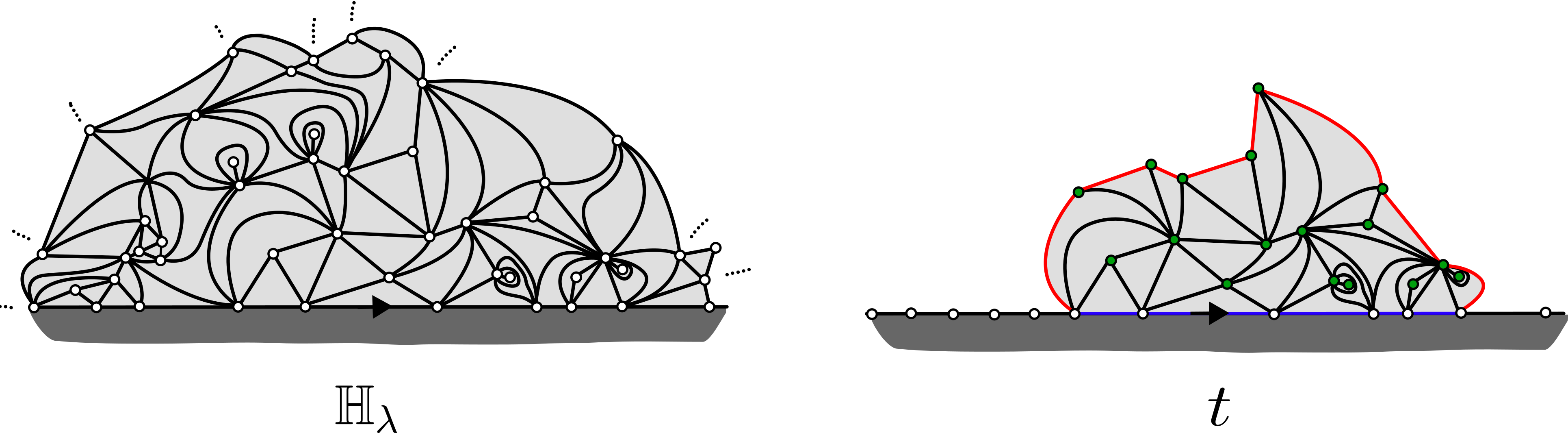}
\caption{On the left: a realisation of $\mathbb{H}_{\lambda}$. On the right, a triangulation $t$ with one infinite boundary (dark grey face) and one infinite hole (white face) such that $t \subset \mathbb{H}_{\lambda}$. On this example, $|t|_{\mathrm{in}} = 17$ denotes the number of green vertices and $|\partial^{*}t|-|\partial t| = 8 - 5 = 3$ denotes the number of red edges minus the number of blue edges.  }
\label{halfplane}
\end{figure}
For any $0 < \lambda \le \lambda_c$, let $h(\lambda)$ be the unique $h \in (0,\tfrac{1}{4}]$ such that
$\lambda = \frac{h}{(1+8h)^{3/2}}$, and define
\begin{align*}
d(\lambda)
= \frac{h \log\!\big(\frac{1+\sqrt{1-4h}}{1-\sqrt{1-4h}}\big)}{(1+8h)\sqrt{1-4h}}.
\end{align*}

For $0 < \lambda \le \lambda_c$, we introduce the type-I random triangulation of the half-plane, denoted by $\mathbb{H}_{\lambda}$, introduced in \cite{budzinski_geodesics} as the type-I analogue of \cite{Angel_Ray}. These triangulations are characterized by
\begin{align}\label{characterize_halfplane}
\mathbb{P}(t \subset \mathbb{H}_{\lambda})
= \bigg(8+\frac{1}{h}\bigg)^{|\partial^{*}t|-|\partial t|}
\lambda^{|t|_{\mathrm{in}}},
\end{align}
where $\lambda = \frac{h}{(1+8h)^{3/2}}$. In particular, the half-plane triangulation $\mathbb{H}_{\lambda}$ satisfies a spatial Markov property: for any $t$, conditionally on $t \subset \mathbb{H}_{\lambda}$, we have
$\mathbb{H}_{\lambda} \setminus t \overset{(d)}{=} \mathbb{H}_{\lambda}$.
We now state the local limit result obtained in \cite[Theorem 1.1]{lions2026locallimitsuniformtriangulations}.
    \begin{theorem}\label{local_limit_boundary}
    Fix $\theta \in [0,\frac{1}{2})$, $\displaystyle \frac{g_n}{n} \underset{n \to +\infty}{\longrightarrow}\theta$ and $\mathbf{p}^{n} = (p^n_{1},\cdots,p^n_{\ell_n})$ such that $|\mathbf{p}^{n}| = o(n)$ and $\displaystyle \frac{|\mathbf{p}^{n}|}{\ell_n}  \underset{n \to +\infty}{\longrightarrow}+\infty$. Denote by $\en$ a uniformly chosen oriented edge on the union $\partial_1\cup \cdots \cup \partial_{\ell_n}$ of the boundaries of $\Tngp$. The following convergence holds for the local topology
\begin{align*}
   (\Tngp,\en) \overset{(d)}{\to}\mathbb{H}_{\lambda(\theta)},
\end{align*}
where $\lambda(\theta)$ is the unique solution to the equation $d(\lambda(\theta)) = \frac{1-2\theta}{6}$.
    \end{theorem}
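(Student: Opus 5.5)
The plan is to characterize every subsequential local limit of $(\Tngp,\en)$ through the quantities $\mathbb{P}(t\subset(\Tngp,\en))$, where $t$ is a finite triangulation with one (long) boundary segment and one hole, and to show that these converge to the right-hand side of \eqref{characterize_halfplane} with $\lambda=\lambda(\theta)$. Since $\mathbb{H}_{\lambda}$ is determined by these probabilities, this identifies the limit.

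\textbf{Step 1: exact formula and tightness.} Fix such a $t$. The map $\Tngp\setminus t$ is a uniform triangulation of a modified multi-polygon $\mathbf{p}'$, in which the root boundary length changes by $|\partial^*t|-|\partial t|$, with the same genus and with $|t|_{\mathrm{in}}$ fewer vertices. This holds except on the event that the hole of $t$ touches itself or another boundary component. Hence
\[
\mathbb{P}(t\subset \Tngp)=\frac{\tau_{\mathbf{p}'}(n-\delta_t,g_n)}{\tau_{\mathbf{p}^n}(n,g_n)},
\]
up to the bad events, for an explicit shift $\delta_t$. I will first use the hypothesis $|\mathbf{p}^n|/\ell_n\to\infty$ to show that the root boundary is typically long, so that the root boundary becomes the infinite line $\partial$ in the limit. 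Next, I will bound the bad events: $t$ wrapping around a handle, or touching another boundary. For these I will use the crude bounds of Lemmas~\ref{bound_separating} and~\ref{add_new_boundary} together with the fact that the chosen root boundary edge is far from other boundaries. Lemma~\ref{bounded_ratio_vertices} then gives uniform bounds on these ratios, which yields tightness (in particular, bounded root degrees) and guarantees that every subsequential limit is a one-ended planar half-plane triangulation.

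\textbf{Step 2: spatial Markov structure of limits.} Along a subsequence, I will extract limits
\[
\frac{\tau_{\mathbf{p}^n}(n-1,g_n)}{\tau_{\mathbf{p}^n}(n,g_n)}\to \lambda,\qquad
\frac{\tau_{(p_1+1,\dots)}(n,g_n)}{\tau_{\mathbf{p}^n}(n,g_n)}\to a .
\]
The aim is then to show that these limits do not depend on small perturbations of $(\mathbf{p},n)$, which makes the limiting ratio multiplicative in $t$. I expect to get this from monotonicity in Lemma~\ref{bounded_ratio_vertices} combined with a log-concavity or averaging argument. It follows that every subsequential limit satisfies $\mathbb{P}(t\subset H)=a^{-(|\partial^*t|-|\partial t|)}\lambda^{|t|_{\mathrm{in}}}$. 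Such a limit is a weakly Markovian half-plane triangulation, and a classification argument (summing over the possible first peeling steps) forces $a=8+1/h(\lambda)$. So the limit is $\mathbb{H}_{\lambda}$ for some $\lambda\in(0,\lambda_c]$.

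\textbf{Step 3: identifying $\lambda$, the main obstacle.} It remains to show that $\lambda=\lambda(\theta)$, and this is where I expect the difficulty. I would mimic the Budzinski--Louf argument for the closed case. The ratio $\tau(n-1,g)/\tau(n,g)$ is tied to the local limit of $T_{n,g}$, which is the PSHT with parameter $\lambda(\theta)$, whose mean inverse root degree encodes $d(\lambda)=\frac{1-2\theta}{6}$ through Euler's formula. To transfer this, I will compare $\tau_{\mathbf{p}^n}(n-1,g_n)/\tau_{\mathbf{p}^n}(n,g_n)$ with the closed ratio by gluing a small planar piece along each boundary, which changes the size only by $O(|\mathbf{p}^n|)=o(n)$. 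Alternatively, I could run the Euler-formula and average-degree argument directly on $\Tngp$: vertices far from the boundary see a PSHT-like environment, and boundary effects are $o(n)$. Either route shows that the only admissible $\lambda$ satisfies $d(\lambda)=\frac{1-2\theta}{6}$. Since this identification holds for every subsequence, the full sequence converges.
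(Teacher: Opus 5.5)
The paper does not prove this statement; it quotes it from \cite{lions2026locallimitsuniformtriangulations}, so your proposal has to stand on its own. Your skeleton is the natural one: the root boundary is long because $|\mathbf{p}^n|/\ell_n\to\infty$, then tightness, then Markovian structure of subsequential limits, then identification of the parameter. However, the two steps that carry the content are not supplied.

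\textbf{Step~2.} Knowing $\tau_{\mathbf{p}^n}(n-1,g_n)/\tau_{\mathbf{p}^n}(n,g_n)\to\lambda$ along a subsequence says nothing about $\tau_{\mathbf{p}^n}(n-2,g_n)/\tau_{\mathbf{p}^n}(n-1,g_n)$. It also says nothing about the same ratio after the (randomly chosen) root boundary is lengthened. Lemma~\ref{bounded_ratio_vertices} gives only monotonicity and two-sided bounds, which allow oscillations, and no log-concavity of $n\mapsto\tau_{\mathbf{p}}(n,g)$ is available.

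What the finite formula gives for free is that $\lim\mathbb{P}(t\subset\Tngp)$ depends only on $(|\partial^*t|-|\partial t|,|t|_{\mathrm{in}})$. This is a weak Markov property, and its solutions are \emph{mixtures} over a random parameter. You never rule out the mixture. In the closed case, \cite{Budzinski_2020} needs a separate two-root argument, using that the mean inverse degree is deterministic by Euler's formula, to show the mixing measure is a Dirac mass.

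Nor does summing over the first peeling step force $a=8+\frac1h$. By \eqref{formula_W}, the normalisation $\lambda a+2\sum_{i\ge0}a^{-i}w_\lambda(i+1)=1$ becomes $\big(1-\frac{1+8h}{h}x\big)\sqrt{1-4(1+8h)x}=0$ with $x=1/a$. So $a=4(1+8h)$ is a second solution. It is the heavy-tailed, tree-like half-plane seen at the boundary of large planar Boltzmann disks (indeed $w_\lambda(p+1)/w_\lambda(p)\to 4(1+8h)$), and excluding it must use the genus.

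Your bookkeeping is also inconsistent. With your definition, $a\le 1$ by Lemma~\ref{bounded_ratio_vertices}, so $a$ cannot equal $8+\frac1h$. Since $\Tngp\setminus t\in\mathcal{T}_{(p_I+k,\dots)}(n-v+k,g_n)$ with $k=|\partial^*t|-|\partial t|$ and $v=|t|_{\mathrm{in}}$, a multiplicative limit would read $(a/\lambda)^{k}\lambda^{v}$.

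\textbf{Step~3.} This is the heart of the theorem, and neither of your routes closes it.

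Gluing planar pieces into the boundaries only produces injections, i.e.\ inequalities between cardinalities with index shifts of order $|\mathbf{p}^n|$. These control $\log\tau_{\mathbf{p}^n}(n,g_n)$ up to $O(|\mathbf{p}^n|)$. They cannot pin down a one-step ratio without exactly the regularity that is missing.

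The Euler/mean-inverse-degree argument on $\Tngp$ only identifies the parameter of the limit seen from a \emph{uniform} edge, i.e.\ in the bulk; that is where $\frac{1-2\theta}{6}$ comes from. The boundary parameter is a limit of ratios for a different family of multi-polygons: $(p_I+k,\mathbf{p}^n_{-I})$ rather than $(\mathbf{p}^n,q)$.

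The missing idea is a mechanism tying the boundary-rooted limit to the bulk one. For instance, a joint limit for a boundary root and a bulk root showing they share the same $\lambda$ and the hyperbolic perimeter weight $8+\frac1h$. Alternatively, a proof that the vertex and perimeter ratios are asymptotically insensitive to all these changes of $\mathbf{p}$. Without it, your argument stops at subsequential limits that are mixtures of half-plane models of the two types above, with an unidentified mixing law.
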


\subsection{Peeling exploration}\label{peeling_exploration}

In this subsection, we define the peeling exploration for triangulations of the half-plane. Although the peeling exploration can be defined in a more general setting, we restrict ourselves here to the half-plane case.

A peeling algorithm is a function $\mathcal{A}$ that associates to any triangulation with holes $t$ having one infinite boundary, one infinite hole, and finitely many faces, an edge $\mathcal{A}(t)$ on the hole of $t$. Given a peeling algorithm $\mathcal{A}$ and a half-plane triangulation $h$, we define the peeling exploration $(\overline{\mathcal{E}}_k^{\mathcal{A}}(h))_{k \ge 0}$ as follows:
\begin{itemize}
\item[$\bullet$] We define $\overline{\mathcal{E}}_0^{\mathcal{A}}(h)$ as the rooted triangulation with holes consisting of an infinite boundary and an infinite hole.
\item[$\bullet$] For $k \ge 0$, assume that $\overline{\mathcal{E}}_k^{\mathcal{A}}(h)$ has been constructed and satisfies $\overline{\mathcal{E}}_k^{\mathcal{A}}(h) \subset h$. We then define $\overline{\mathcal{E}}_{k+1}^{\mathcal{A}}(h)$ as the triangulation obtained from $\overline{\mathcal{E}}_k^{\mathcal{A}}(h)$ by discovering the triangle incident to the edge $\mathcal{A}(\overline{\mathcal{E}}_k^{\mathcal{A}}(h))$ in $h$ and filling the potential finite hole created.
\end{itemize}

Fix $\lambda \in (0,\lambda_c]$ and a peeling algorithm $\mathcal{A}$. For any $k \ge 1$, we say that
\begin{enumerate}
\item \label{eventC}
the event $\mathbf{C}_k^{\mathcal{A}}$ occurs when the $k$-th discovered triangle has its third vertex not lying on the hole of $\overline{\mathcal{E}}_{k-1}^{\mathcal{A}}(\mathbb{H}_{\lambda})$,
\item \label{eventL}
for any $i \ge 0$, the event $\mathbf{L}_{k,i}^{\mathcal{A}}$ (resp. $\mathbf{R}_{k,i}^{\mathcal{A}}$) occurs when the $k$-th discovered triangle has its third vertex lying on the hole of $\overline{\mathcal{E}}_{k-1}^{\mathcal{A}}(\mathbb{H}_{\lambda})$ exactly $i$ edges to the left (resp. to the right) of $\mathcal{A}(\overline{\mathcal{E}}_{k-1}^{\mathcal{A}}(\mathbb{H}_{\lambda}))$.
\end{enumerate}

Moreover, conditionally on $\overline{\mathcal{E}}_{k-1}^{\mathcal{A}}(\mathbb{H}_{\lambda})$, these events occur with respective probabilities
\begin{align}\label{peeling_transitions}
&\mathbb{P}(\mathbf{C}_k^{\mathcal{A}} \mid \overline{\mathcal{E}}_{k-1}^{\mathcal{A}}(\mathbb{H}_{\lambda}))
= \lambda\bigg(8+\frac{1}{h}\bigg), \nonumber\\
&\mathbb{P}(\mathbf{L}_{k,i}^{\mathcal{A}} \mid \overline{\mathcal{E}}_{k-1}^{\mathcal{A}}(\mathbb{H}_{\lambda})) = \mathbb{P}(\mathbf{R}_{k,i}^{\mathcal{A}} \mid \overline{\mathcal{E}}_{k-1}^{\mathcal{A}}(\mathbb{H}_{\lambda}))
= \bigg(8+\frac{1}{h}\bigg)^{-i} w_{\lambda}(i+1).
\end{align}

\subsection{Hulls}\label{section_hulls}

Fix $n,g \ge 0$ such that $n \ge 2g-1$. Also fix $\ell > 0$ and $(p_1,\ldots,p_{\ell}) \in (\mathbb{N}^{*})^{\ell}$. For $t \in \mathcal{T}_{\mathbf{p}}(n,g)$ and a subset $E$ of oriented edges of $t$, recall the definition of $B_r^{+}(t,E)$ from Definition~\ref{strong_ball}. Then $t \setminus B_r^{+}(t,E)$ is a family $t^1,\ldots,t^k$ of triangulations of multipolygons. Let us recall that for $t_0$ a family of triangulations with holes, we denote by $|t_0|$ the total number of internal triangles composing the family.

Recall that $t$ has $2n-\sum_{i=1}^{\ell}(p_i-2)$ triangles. It follows that at most one connected component $t^i$ can have more than $n$ triangles. Therefore, gluing to $B_r^{+}(t,E)$ all the connected components $t^j$ with at most $n$ triangles defines a family $B_r^{\bullet}(t,E)$ of triangulations with holes such that $t \setminus B_r^{\bullet}(t,E)$ is either empty or connected.

\begin{definition1}
The subset $B_r^{\bullet}(t,E)$ is called the hull of radius $r$ centered at $E$.
\end{definition1}

\begin{figure}[H]
\includegraphics[scale=0.2]{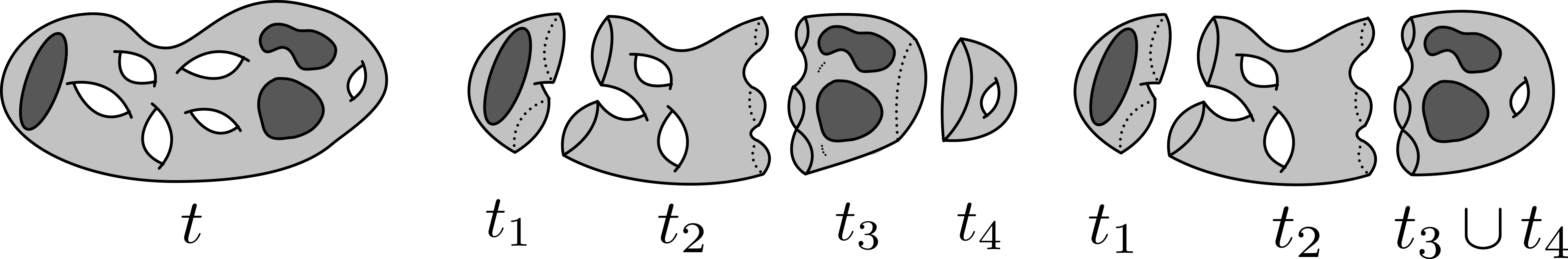}
\caption{On the left, a triangulation $t$ of the $\mathbf{p}$-gon. In the middle, we have
$B_r^{+}(t,\partial t)=(t_1,t_3)$ and
$t \setminus B_r^{+}(t,\partial t)=(t_2,t_4)$.
On the right, we represent the hull of radius $r$, for which
$B_r^{\bullet}(t,\partial t)=(t_1,t_3 \cup t_4)$ and
$t \setminus B_r^{\bullet}(t,\partial t)=t_2$.}
\label{hulls}
\end{figure}

In the remainder of the paper, the key task is to understand the volume growth of strong balls, that is, the quantity $|B_r^{+}(\Tng,e_n)|$, where $e_n$ is an oriented edge chosen uniformly at random on $\Tng$. However, this is a difficult problem, since $\Tng \setminus B_r^{+}(\Tng,e_n)$ may consist of many connected components $T^1,\ldots,T^k$, whose behaviour is hard to control simultaneously. Thus, we will rather consider the growth of the hulls $|B_r^{\bullet}(\Tng,e_n)|$.

For the rest of the paper, we fix $\displaystyle \frac{g_n}{n} \to \theta \in (0,\tfrac{1}{2})$. We write
$B_{r}^{+}(n) := B_r^{+}(\Tng,e_n)$ and
$B_{r}^{\bullet}(n) := B_r^{\bullet}(\Tng,e_n)$,
where $e_n$ denotes the root edge of $\Tng$. Most of the time, we simply write $B_r^{+}:= B_{r}^{+}(n)  $ and $B_r^{\bullet}:=B_{r}^{\bullet}(n) $, since the dependence on $n$ is clear from the context. It is direct to verify that
\[
B_{r+1}^{\bullet}
= B_r^{\bullet} \cup B_1^{\bullet}(T',\partial T')
\quad\text{and}\quad
\partial^{*}B_{r+1}^{\bullet}
= \partial^{*}B_1^{\bullet}(T',\partial T'), \hspace*{0.3cm} \text{ where }T' = \Tng \setminus B_r^{\bullet}.
\]
Thus, understanding $B_1^{\bullet}(T',\partial T')$ will play a crucial role in the study of the growth of $|B_r^{\bullet}|$, which is the focus of Section~\ref{section_boundary}.

\section{From $(1-\varepsilon)D_{\theta}\log(n)$ to $D_{\theta}\log(n)$}\label{section_final_proof}
In this section, we give the proof of Theorem~\ref{theorem_typical_distances} given the following theorem that we prove in Section~\ref{section_growth_of_hulls}.

\begin{theorem}\label{growth_of_balls}
For any $\varepsilon > 0$, we have the convergence
\begin{align*}
\frac{\log |B_{(1-\varepsilon)D_{\theta}\log(n)}^{\bullet}(\Tng,e_n)|}{(1-\varepsilon)\log(n)}
\underset{n \to +\infty}{\overset{(\mathbb{P})}{\longrightarrow}} 1,
\end{align*}
where $D_{\theta} = \frac{1}{\log(m_{\theta}^{-1})}$.
\end{theorem}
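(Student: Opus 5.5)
We prove the two inequalities separately. Put $r_n = (1-\varepsilon)D_{\theta}\log(n)$ and write $m := m_{\theta}$. The goal is to show that for every $\delta>0$, with high probability,
\[
n^{(1-\varepsilon)(1-\delta)} \le |B^{\bullet}_{r_n}| \le n^{(1-\varepsilon)(1+\delta)}.
\]
The main tool is the one-step recursion $B^{\bullet}_{r+1} = B^{\bullet}_r \cup B^{\bullet}_1(T',\partial T')$ with $T' = \Tng\setminus B^{\bullet}_r$. Conditionally on $B^{\bullet}_r$, the spatial Markov property makes $T'$ a uniform triangulation of the $\mathbf{p}$-gon. Its parameters are $(n',g')$ with $|\mathbf{p}| = |\partial^* B^{\bullet}_r|$, and $n'$ and $g'$ differ from $n$ and $g_n$ by at most $O(|B^{\bullet}_r|)$.

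\textbf{Step 1: one-step growth from the boundary.} The key input is the result of Section~\ref{section_boundary}, which I would establish from Theorem~\ref{local_limit_boundary}. When $|\mathbf{p}| = o(n)$ and $|\mathbf{p}|/\ell\to\infty$, the added volume should satisfy
\[
|B^{\bullet}_1(T',\partial T')| = (c_{\theta}+o(1))\,|\mathbf{p}| \quad\text{and}\quad |\partial^* B^{\bullet}_1(T',\partial T')| = (m^{-1}+o(1))\,|\mathbf{p}|
\]
in probability. I would derive this by rooting at a uniform boundary edge and applying Theorem~\ref{local_limit_boundary}. The expected new perimeter per boundary edge then converges to the corresponding quantity for $\mathbb{H}_{\lambda(\theta)}$, namely one layer of peeling. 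The identification of that quantity with $m^{-1}$ should follow from the peeling transitions \eqref{peeling_transitions}, consistent with the PSHT growth rate.

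Turning convergence of expectations into concentration needs two further ingredients. The first is uniform integrability, provided by the second moment bound of Proposition~\ref{second_moment_bounded_degree} on boundary degrees. The second is approximate independence of two uniform boundary roots, which comes from the joint version of the local limit for two independent roots. Together these give a variance bound of order $o(|\mathbf{p}|^2)$. The hypothesis $|\mathbf{p}|/\ell\to\infty$ is guaranteed on $\mathbf{Iso}_n(\varepsilon)$ by Proposition~\ref{multicurves_few_boundaries}, since $\ell\le |B_r^\bullet|/\log\log n$ while $|\mathbf{p}|\ge \ct|B^\bullet_r|$.

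\textbf{Step 2: iteration up to $r_n$.} Start at a large fixed radius $r_0$. There the local convergence of Budzinski--Louf to the PSHT gives $|\partial^* B^{\bullet}_{r_0}| \approx m^{-r_0}$, with ratios close to $m^{-1}$ once $r_0$ is large. However, the isoperimetric bounds only apply once $|B^{\bullet}_r|\ge K_{\theta}\log n$. To bridge that gap, I would use a quantitative local limit for radii up to $C\log\log n$, or simply accept an $O(\log\log n)$ loss in the radius, which is negligible against $\log n$.

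Then iterate Step 1 from $r_0$ up to $r_n$. The main obstacle is that the failure probabilities of the individual steps, each only $o(1)$, do not sum over $\Theta(\log n)$ steps. I would handle this with a supermartingale-type argument on $\log|\partial^*B^\bullet_r|$. Conditionally on the past, each ratio $|\partial^*B^\bullet_{r+1}|/|\partial^* B^\bullet_r|$ has conditional mean $m^{-1}+o(1)$ and bounded conditional second moment, with the bound uniform over the past thanks to Proposition~\ref{second_moment_bounded_degree}. Since the perimeter is at least $K_\theta\log n\to\infty$, the variance of this ratio is in fact $o(1)$. A Chebyshev or Azuma bound on the sum of centered log-increments over $\log n$ steps then gives deviations of order $o(\log n)$ with high probability. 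As a result, $\log|\partial^*B^\bullet_{r_n}| = r_n\log m^{-1} + o(\log n) = (1-\varepsilon)\log n + o(\log n)$.

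Along the way the hypothesis $|\mathbf{p}|=o(n)$ holds automatically, because the volume is at most $n^{1-\varepsilon/2}$. Throughout, the volume and perimeter of the hull are comparable up to constants: the isoperimetric inequality (Theorem~\ref{isoperimetric_inequalities}) gives $|\partial^*B^\bullet_r|\ge \ct|B_r^\bullet|$, and the reverse inequality follows from summing perimeters, since $|B^{\bullet}_r| \le \sum_{s\le r} C|\partial^* B^{\bullet}_s|$, which is geometric. Constant factors are invisible after taking the logarithm and dividing by $\log n$, which yields both inequalities.
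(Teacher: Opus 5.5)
Your overall plan matches the paper's: one-layer boundary growth from Theorem~\ref{local_limit_boundary}, uniform integrability from Proposition~\ref{second_moment_bounded_degree}, then iteration over $\Theta(\log n)$ layers. Two steps do not go through as written.

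\textbf{The start of the iteration.} Neither of your two options bridges the phase before $|B^\bullet_r|\ge K_\theta\log n$.
A quantitative local limit up to radius $C\log\log n$ is not available. ``Accepting an $O(\log\log n)$ loss'' presupposes exactly what is missing: that the first radius with $|B^\bullet_r|\ge K_\theta\log n$ is $o(\log n)$, and that the hull there has size $n^{o(1)}$ rather than having jumped to $n^{c}$.
Below $K_\theta\log n$ the isoperimetric inequality gives nothing. Moreover, $\Tng$ does contain regions of size of order $\log n$ cut off by multicurves of length $o(\log n)$. So nothing in your argument controls growth in this phase.

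The missing idea is Proposition~\ref{few_isolated_faces} combined with the uniformity of the root face $f_n$. For suitable $\beta$, with probability close to $1$, $f_n$ is not $\beta$-isolated. This forces $|\partial^*B^\bullet_r|\ge\beta|B^\bullet_r|$ from $r=0$ on (while $|B^\bullet_r|\le n^{1-\varepsilon}$). Hence $|B^\bullet_r|\ge(1+\beta/3)^r$ and $|B^\bullet_{\log(n)^{1/2}}|\ge K_\theta\log n$. The matching bound $\mathbb{E}|B^\bullet_{\log(n)^{1/2}}|\le n^{o(1)}$ comes from a path-cutting first-moment count (Proposition~\ref{control_growth_small_r}).

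Likewise, ``$|\mathbf{p}|=o(n)$ holds automatically because the volume is at most $n^{1-\varepsilon/2}$'' is circular, since that bound is part of what you are proving. You need a no-large-jump estimate (Proposition~\ref{no_big_jumps}: Markov on the intermediate layer $B_1^+(T',\partial T')$, with a union bound over $O(\log n)$ layers) plus a stopping argument.

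\textbf{The one-layer estimate.} Your Step~1 cannot be read off the local limit in the form you state.
The hull $B^\bullet_1(T',\partial T')$ is not locally determined, because whether a component of $T'\setminus B_1^+(T',\partial T')$ gets filled depends on its global size. A component that looks unbounded from a boundary edge may have intermediate size and be swallowed. So $|\partial^*B^\bullet_1(T',\partial T')|$ is not a sum of contributions governed by the local limit. This is why the paper gets the lower bound only on $\{|\partial^*\Bnb|/|\Bnb|\ge\ct\}$, via $\mathbf{GOOD}(a,\delta)$ and the counting argument of Proposition~\ref{J_lies_on_hole_hull}.

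Your variance bound also needs a joint local limit for two independent boundary roots, which is not available here. The paper avoids decorrelation entirely by averaging over windows $I_a(t,e)$ (Lemma~\ref{inequalities}). Each window average already concentrates in $\mathbb{H}_{\lambda(\theta)}$ for large $a$, so single-root estimates plus Markov suffice.

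\textbf{The obstacle you single out.} It needs no Azuma. With uniform $L^1$ convergence (Proposition~\ref{growth_rate_first_moment}), each layer satisfies $\mathbb{E}|\log(|\partial^*B^\bullet_{r+1}|/|\partial^*B^\bullet_r|)-D_\theta^{-1}|=o(1)$ uniformly. The sum over $O(\log n)$ layers is then $o(\log n)$ in expectation, and Markov concludes. This does require a lower bound on the ratio, such as $\ge\beta/3$, to make $\log$ Lipschitz; you never mention it, and without it the logarithm of the ratio is uncontrolled.
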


We also need a proposition that shows that the volumes of hulls are indeed a good approximation for those of balls. 
\begin{proposition}\label{hull_well_defined}
For any $\varepsilon > 0$ and $n \ge 1$, let us assume that the event $\mathbf{Iso}_{n}(\frac{\varepsilon}{2})$ occurs. Let us also fix $r \ge 0$ such that $|B_r^{+}| \le n^{1-\varepsilon}$. Then, we have $B_r^{\bullet} \neq \Tng$. Moreover, if $K_{\theta}\log(n) \le |B_r^{\bullet}|$, then we have
\begin{align}\label{approx_balls_with_hulls}
\big|\log(|B_r^{\bullet}|)-\log(|\partial^{*}B_r^{\bullet}|)\big| \le \log(3\ct^{-1}).
\end{align}
In particular, we have 
\[
\big|\log(|B_r^{+}|)-\log(|B_r^{\bullet}|)\big| \le \log(3\ct^{-1}).
\]
\end{proposition}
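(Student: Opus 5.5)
The plan is to read the hull boundary $\partial^{*}B_r^{\bullet}$ as a separating multicurve in $\Tng$ and then apply Theorem~\ref{isoperimetric_inequalities} and Proposition~\ref{multicurves_few_boundaries}, both of which hold on $\mathbf{Iso}_n(\frac{\varepsilon}{2})$.

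\textbf{Step 1: $B_r^{\bullet}\neq\Tng$.} The complement $\Tng\setminus B_r^{+}$ has at least $2n-n^{1-\varepsilon}$ triangles. Suppose every component had at most $n$ triangles, so that all of them are glued into the hull. Consider the family of components glued so far. Starting from $B_r^{+}$, I will add components one at a time until the glued region $X$ first has more than $K_{\theta}\log(n)$ triangles. Two cases arise.

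If $X$ first exceeds $K_{\theta}\log(n)$ triangles already at the start, take $X=B_r^{+}$. Otherwise, stop at the first moment when $|X|\ge n^{1-\varepsilon/2}$. This threshold is reachable, and the overshoot is controlled as follows. Each added component has at most $n$ triangles. If a single component has more than $n^{1-\varepsilon/2}$ triangles, use that component itself as $X$ instead, since its size then lies between $n^{1-\varepsilon/2}$ and $n$. If no component is that large, the greedy sum stops between $n^{1-\varepsilon/2}$ and $2n^{1-\varepsilon/2}$.

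In either case $X$ or its complement is face-connected: a single component is face-connected, and $B_r^{+}$ plus components has connected interior. The boundary of $X$ is a sub-multicurve of the holes of $B_r^{+}$. Its length is at most the number of hole edges of the glued pieces, which is at most $3|B_r^{+}|\le 3n^{1-\varepsilon}$. This is much smaller than $\ct n^{1-\varepsilon/2}$, which contradicts Theorem~\ref{isoperimetric_inequalities}.

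A subtlety is that separation into exactly two face-connected pieces may fail. To handle this, I will instead take $X$ to be one large component $T^j$ versus its complement, which is face-connected through $B_r^{+}$ and the other components. Choosing the single largest component and accepting that it might be smaller than $n^{1-\varepsilon/2}$, a counting argument still works: there are at most $3n^{1-\varepsilon}$ components, since each has a boundary edge, so the largest has at least about $n^{\varepsilon}/2\ge K_{\theta}\log n$ triangles. Its boundary is at most $3n^{1-\varepsilon}$, while the isoperimetric inequality demands $\ct\min(k_1,k_2)$. I expect this to be the main obstacle. The clean route is to apply the isoperimetric inequality to the complement-of-hull component at every scale rather than assuming no large component exists.

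\textbf{Step 2: the ratio estimate \eqref{approx_balls_with_hulls}.} Now $T'=\Tng\setminus B_r^{\bullet}$ is a single component and $B_r^{\bullet}$ is face-connected, being $B_r^{+}$ plus components attached to it. So $\partial^{*}B_r^{\bullet}$ is a $(k_1,k_2)$-separating multicurve with $k_1=|B_r^{\bullet}|$ as the smaller side. Indeed $k_2=|T'|>n$, since otherwise $T'$ would have been glued into the hull. The condition $k_1\ge K_{\theta}\log n$ holds by assumption.

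Theorem~\ref{isoperimetric_inequalities} gives $|\partial^{*}B_r^{\bullet}|\ge\ct|B_r^{\bullet}|$. In the other direction, every hole edge is an edge of a triangle of $B_r^{\bullet}$, so $|\partial^{*}B_r^{\bullet}|\le 3|B_r^{\bullet}|$. Taking logarithms, and using $\ct\le 1$, yields \eqref{approx_balls_with_hulls}.

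\textbf{Step 3: comparing $B_r^{+}$ and $B_r^{\bullet}$.} The boundary of the hull is contained in the holes of $B_r^{+}$, so $|\partial^{*}B_r^{\bullet}|\le 3|B_r^{+}|$. Combining this with $|B_r^{+}|\le|B_r^{\bullet}|\le\ct^{-1}|\partial^{*}B_r^{\bullet}|$ gives $|B_r^{+}|\le|B_r^{\bullet}|\le 3\ct^{-1}|B_r^{+}|$, which is the final claim.
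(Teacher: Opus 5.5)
Your Steps 2 and 3 are correct and are the paper's argument. However, Step 1 ($B_r^{\bullet}\neq\Tng$) is not actually proved, and Step 2 depends on it through $|\Tng\setminus B_r^{\bullet}|>n$.

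The greedy union $B_r^{+}\cup T^{j_1}\cup\cdots$ has a complement made of several components, as you note yourself. So its boundary is not a $(k_1,k_2)$-separating multicurve, and Theorem~\ref{isoperimetric_inequalities} says nothing about it. The only sub-case you genuinely close is the one where some component has between $n^{1-\varepsilon/2}$ and $n$ triangles. Your fallback with the single largest component gives no contradiction. That component is only guaranteed about $\frac{2}{3}n^{\varepsilon}$ triangles, while its boundary may have length up to $3n^{1-\varepsilon}$. For $\varepsilon<\frac12$, this is far larger than $\ct\cdot\frac{2}{3}n^{\varepsilon}$. Your closing suggestion (``apply the isoperimetric inequality to the complement-of-hull component at every scale'') does not supply the missing step.

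The missing idea is to apply the isoperimetric inequality to \emph{each} small component separately and then sum. Let $T^1,\dots,T^k$ be the components of $\Tng\setminus B_r^{+}$, ordered by size. For $i\le k-1$:
\begin{itemize}
\item $|T^i|\le n$, since otherwise $|T^i|+|T^k|>2n$;
\item $T^i$ is face-connected, and its complement is face-connected through $B_r^{+}$.
\end{itemize}
Hence on $\mathbf{Iso}_n(\frac{\varepsilon}{2})$ we get $|T^i|\le\max\bigl(K_{\theta}\log(n),\ct^{-1}|\partial T^i|\bigr)$.

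The boundaries $\partial T^i$ are disjoint subsets of $\partial^{*}B_r^{+}$. This gives $\sum_i|\partial T^i|\le 3n^{1-\varepsilon}$ and $k\le 3n^{1-\varepsilon}$, which are exactly the two bounds you already had. Summing yields $\sum_{i<k}|T^i|\le 3n^{1-\varepsilon}\bigl(K_{\theta}\log(n)+\ct^{-1}\bigr)$. Therefore $|B_r^{+}|+\sum_{i<k}|T^i|\le n^{1-\varepsilon/2}$, and so $|T^k|\ge 2n-n^{1-\varepsilon/2}>n$, which is the paper's argument.

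Equivalently, in your contradiction setup, apply the inequality to all $k$ components. The total mass then comes out as $O(n^{1-\varepsilon}\log(n))$ rather than at least $2n-n^{1-\varepsilon}$. The isoperimetric deficit is only visible on average over the components: total boundary $O(n^{1-\varepsilon})$ against total mass about $2n$. No single component, the largest included, needs to witness it.
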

\begin{proof}
For $r \ge 0$, let $T^1,\dots,T^k$ denote the connected components of $\Tng \setminus B_r^{+}$ ordered by increasing number of triangles (we choose an arbitrary convention to order the $T^i$ in case of equality for the number of triangles). Denote by $B_r^{\circ}$ the triangulation obtained by gluing to $B_r^{+}$ the components $T^1,\dots,T^{k-1}$. 
For $1 \le i \le k-1$, applying the isoperimetric inequality to the component $T^i$, we obtain $$|T^i| \le \max(K_{\theta}\log(n),\ct^{-1}|\partial T^i|).$$ Moreover, using the crude bound $ |\partial^{*}B_r^{+}| \le 3 |B_r^{+}| \le 3n^{1-\varepsilon}$, for $n$ large enough, we obtain
\begin{align*}
|B_r^{\circ}| &= |B_r^{+}|+\somme{i=1}{k-1}{|T^i|} \\&\le n^{1-\varepsilon} + 3n^{1-\varepsilon} \cdot K_{\theta} \log(n) + \ct^{-1}|\partial^{*}B_r^{+}|\\
&\le (1+3K_{\theta}\log(n) +3\ct^{-1})n^{1-\varepsilon} \\&\le n^{1-\frac{\varepsilon}{2}}.
\end{align*}
It follows that $|T^k| \ge n$. Hence,
\[
B_r^{\bullet} = B_r^{\circ} \neq \Tng.
\]

Moreover, if $K_{\theta}\log(n) \le |B_r^{\bullet}|$, the isoperimetric inequality yields $ \ct|B_r^{\bullet}|\le |\partial^{*}B_r^{\bullet}|$ and we have the crude bound $ |\partial^{*}B_r^{\bullet}| \le 3|B_r^{\bullet}|$, which gives \eqref{approx_balls_with_hulls}. The second part of the proposition follows from the inclusion $\partial^{*} B_r^{\bullet} \subset B_r^{+} \subset B_r^{\bullet}$.
\end{proof}

Now, we can prove Theorem~\ref{theorem_typical_distances}.
\begin{proof}
Let us fix $\varepsilon > 0$. We start by proving the theorem in the case where $x_n^1,x_n^2$ denote the starting points of two oriented edges $e_n^1,e_n^2$ chosen uniformly at random on $\Tng$. In this proof, we write $B_r := B_r(\Tng,e_n^1)$ and $B_r^{\bullet} =  B_r^{\bullet}(\Tng,e_n^1)$. We start by bounding $\mathbb{P}(d_{\Tng}(x_n^1,x_n^2) \le (1-\varepsilon)D_{\theta}\log(n))$. We can write 
\begin{align}\label{lower_bound}
\mathbb{P}(d_{\Tng}(x_n^1,x_n^2) \le (1-\varepsilon)D_{\theta}\log(n)) &= \mathbb{P}( x_n^2\in B_{(1-\varepsilon)D_{\theta}\log(n)}) \nonumber\\&\le \mathbb{P}( e_n^2 \in B_{(1-\varepsilon/2)D_{\theta}\log(n)}^{\bullet}) \nonumber\\
&= \mathbb{E}\bigg[\frac{2\#E(B_{(1-\varepsilon/2)D_{\theta}\log(n)}^{\bullet})}{6n}\bigg] \nonumber\\
& \le  \mathbb{E}\bigg[\frac{4|B_{(1-\varepsilon/2)D_{\theta}\log(n)+1}^{\bullet}|}{6n}\bigg],
\end{align}
since each edge is incident to at most $2$ triangles. It follows that the right-hand side is bounded by $
\displaystyle \mathbb{E}\bigg[\frac{4|B_{(1-\varepsilon/2)D_{\theta}\log(n)+1}^{\bullet}|}{6n}\bigg]$. Using Theorem~\ref{growth_of_balls} and the fact that $|B_{(1-\varepsilon/2)D_{\theta}\log(n)+1}^{\bullet}| \le 6n$, we deduce that \eqref{lower_bound} goes to $0$ as $n$ tends to $+\infty$.

Let us now prove that for any $\eta > 0$, there exists a constant $C_{\theta,\eta} > 0$ which does not depend on $\varepsilon$ such that for $n$ large enough, we have 

\begin{align}\label{toshow_eta}
\mathbb{P}(d_{\Tng}(x_n^1,x_n^2) \ge (D_{\theta}+C_{\theta,\eta}\varepsilon)\log(n)) \le 4\eta.
\end{align}
It is easy to check that if this holds for any $\varepsilon,\eta > 0$, letting first $\varepsilon \to 0$ and then $\eta \to 0$ concludes the proof of the first part of Theorem~\ref{theorem_typical_distances}.

Fix $\eta > 0$. For $\beta > 0$ we recall that the event $\mathbf{Iso}_n(\frac{\varepsilon}{4},\eta,\beta)$ occurs when the conclusions of Theorem~\ref{isoperimetric_inequalities} and Proposition~\ref{multicurves_few_boundaries} are satisfied and we have $\mathrm{Isol}_{\beta}(\Tng) \le \eta n$. Using Corollary~\ref{Iso_typical}, we fix $\beta > 0$ such that for $n$ large enough we have
\begin{align*}
\mathbb{P}\bigg(\mathbf{Iso}_n\bigg(\frac{\varepsilon}{4},\eta,\beta\bigg)\bigg)\ge 1-\eta.
\end{align*}
Let us now prove that there exists a constant $C_{\theta,\eta} > 0$ such that, for $n$ large enough, under $\mathbf{Iso}_n(\frac{\varepsilon}{4},\eta,\beta) $ and  $\{n^{1-2\varepsilon}\le|B_{(1-\varepsilon)D_{\theta}\log(n)}^{\bullet}|\le n^{1-\varepsilon/2}\}$, we have
 \begin{center}
 $|\Tng \setminus  B_{(D_{\theta}+C_{\theta,\eta}\varepsilon)\log(n)}^{+}|< 2\eta n.$
 \end{center} 
 First, by Proposition~\ref{hull_well_defined}, we have $|B_{(1-\varepsilon)D_{\theta}\log(n)}^{+}| \ge \frac{\ct}{3} n^{1-2\varepsilon}$. Now, for $r \ge (1-\varepsilon)D_{\theta}\log(n)$ such that $|\Tng \setminus  B_{r}^{+}|\ge 2\eta n$, we write $T^1,\cdots,T^k$ for the connected components of $\Tng \setminus  B_{r}^{+}$. Then there are two cases :
\begin{itemize}
	\item[$\bullet$] If one of the components $T^i$ has at least $n$ triangles, then using $\mathbf{Iso}_n(\frac{\varepsilon}{4},\eta,\beta)$, we find $|\partial^{*}B_r^{+}| \ge \ct |B_r^{+}|$. It follows that $|B_{r+1}^{+}| \ge (1+\frac{\ct}{3})|B_r^{+}|$.
	\item[$\bullet$] If all the components $T^i$ have at most $n$ triangles, let us denote by $\ell_i$ the total length of the multicurve that separates $T^i$ from $B_r^{+}$. Then if $\ell_i \le \beta |T^i|$, using the fact that $|B_r^{+}| \ge \sqrt{n}$, we deduce that all the triangles in $T^i$ are $\beta$-isolated.  Using $\mathbf{Iso}_n(\frac{\varepsilon}{4},\eta,\beta)$, this implies $\displaystyle \somme{\substack{i=1\\\ell_i \le \beta |T^i|}}{k}{|T^i|} \le \eta n$. Using this with the fact that $\somme{i=1}{k}{|T^i|} \ge 2\eta n$, we obtain $\displaystyle \somme{\substack{i=1\\\ell_i \ge \beta |T^i|}}{k}{|T^i|} \ge \eta n$ and thus $\somme{i=1}{k}{\ell_i} \ge \beta \eta n$. This rewrites as $|\partial^{*}B_r^{+}| \ge \beta \eta n$. It follows that $|B_{r+1}^{+}| \ge |B_r^{+}|+\frac{1}{3}\beta \eta  n \ge (1+\frac{\beta \eta}{6}) |B_r^{+}|$.
\end{itemize}
It follows that for $r' \ge 0$ such that $|\Tng \setminus  B_{(1-\varepsilon)D_{\theta}\log(n) + r'}^{+}|\ge 2\eta n$, we have $$|B_{(1-\varepsilon)D_{\theta}\log(n) + r'}^{+}| \ge \frac{\ct}{3}n^{1-2\varepsilon}(1+c_{\theta,\eta})^{r'}, $$ where $c_{\theta,\eta} = \min(\frac{\beta \eta}{6},\frac{\ct}{3})$. Let us introduce $C_{\theta,\eta} = \frac{4}{\log(1+c_{\theta,\eta})}$. We directly deduce that for $r \ge (D_{\theta}+C_{\theta,\eta}\varepsilon )\log(n)$ we have $|\Tng \setminus  B_{r}^{+}| < 2 \eta  n$. Since for $n$ large enough we have 
\begin{center}
$ \mathbb{P}(\mathbf{Iso}_n(\frac{\varepsilon}{4},\eta,\beta) \cap \{n^{1-2\varepsilon}\le|B_{(1-\varepsilon)D_{\theta}\log(n)}^{\bullet}|\le n^{1-\varepsilon/2}\}) \ge 1-3\eta$,
\end{center} 
we deduce 
\begin{align*}
\mathbb{P}(d_{\Tng}(x_n^1,x_n^2) \ge (D_{\theta}+C_{\theta,\eta}\varepsilon)\log(n))  \le \mathbb{E}\bigg[\frac{2|\Tng \setminus B_{(D_{\theta}+C_{\theta,\eta}\varepsilon)\log(n)}^{+}|}{6n} \bigg] \le 4 \eta .
\end{align*}
This proves \eqref{toshow_eta}.

Now, we prove the second part of the theorem, i.e. $y_n^1,y_n^2$ are independent uniform vertices on $\Tng$.  Conditionally on $\Tng$, for $v \in \Tng$ a vertex, we have 
\begin{align*}
&\mathbb{P}(x_n^1 = v\text{ | }\Tng) = \frac{\deg_{\Tng}(v)}{6n} \text{ and }\mathbb{P}(y_n^1 = v\text{ | }\Tng) = \frac{1}{n+2-2g_n}.
\end{align*}
 Thus, for $v,v'$ two vertices in $\Tng$, using the fact that $\frac{g_n}{n} \to \theta$ and $\deg_{\Tng}(v)\ge 1$, we can write for $n$ large enough 
\begin{align*}
\mathbb{P}(y_n^1 = v,y_n^2 = v'\text{ | }\Tng) &= \frac{(6n)^2}{(n+2-2g_n)^2} \cdot \frac{1}{\deg_{\Tng}(v)}\cdot \frac{1}{\deg_{\Tng}(v')}\mathbb{P}(x_n^1 = v,x_n^2 = v'\text{ | }\Tng)\\
&\le 2\frac{36}{(1-2\theta)^2}\mathbb{P}(x_n^1 = v,x_n^2=v'\text{ | }\Tng).
\end{align*} 
In other words, the law of $(y_n^1,y_n^2)$ is absolutely continuous with respect to the law of $(x_n^1,x_n^2)$; the conclusion immediately follows.
\end{proof}

\section{Growth rate at the boundary}\label{section_boundary}

In this section, we define $\Bn := B_1^{+}(\Tngp,\partial \Tngp)$ as the triangulation with holes obtained by gluing all the triangles having at least one vertex on $\partial \Tngp$, as described in Section~\ref{section_hulls}.
We use the shorthand notation $\Bnb:= B_1^{\bullet}(\Tngp,\partial \Tngp)$.
The main results of the section are Proposition~\ref{second_moment_bounded_degree} and Proposition~\ref{growth_rate_first_moment}.
\subsection{Second moment bound}
This section is dedicated to proving Proposition~\ref{second_moment_bounded_degree}. For any $x \in  \Tngp$, we write
\[
D_x^n := \deg_{\Tngp}(x),
\]
where $\deg_{\Tngp}(x)$ denotes the number of oriented edges starting from $x$.
Each edge with one endpoint on $\partial \Tngp$ is incident to at most two triangles, yielding the bounds
\begin{align}\label{bound}
|\partial^{*}\Bnb| \le |\partial^{*}\Bn| \le |\Bn| \le 2\sum_{x \in \partial \Tngp} D_x^n.
\end{align}
We hope the following proposition may be of interest in other contexts; we therefore give a general formulation.

\begin{proposition}\label{second_moment_bounded_degree}
Let $n, g \ge 1$ such that $n \ge 2g-1$ and let $\mathbf{p} = (p_1,\cdots,p_{\ell}) \in (\mathbb{N}^{*})^{\ell}$. Denoting by $e_1$ the root edge on $\partial_1 T_{n,g,\mathbf{p}}$ and by $\rho_1$ its starting point, we have 
\begin{align*}
\mathbb{E}[\deg_{T_{n,g,\mathbf{p}}}(\rho_1)^2] \le 4 + 5 \frac{\tau_{\mathbf{p}}(n+1,g)}{\tau_{\mathbf{p}}(n,g)}+ 16\frac{\tau_{\mathbf{p}}(n+2,g)}{\tau_{\mathbf{p}}(n,g)}.
\end{align*}

Moreover, for any $\theta \in [0,\frac{1}{2})$, there exists $C_{\theta} > 0$ such that  for any $\frac{g_n}{n} \to \theta \in [0,\frac{1}{2})$ and $\mathbf{p}^n = (p_1^n,\cdots,p_{\ell_n}^n) \in (\mathbb{N}^{*})^{\ell_n}$ such that $|\mathbf{p}^n| = o(n)$, for $n$ large enough
\begin{align*}
\forall x \in \partial \Tngp, \qquad \mathbb{E}[(D_x^n)^2] \le C_{\theta}.
\end{align*}
Therefore by \eqref{bound}, for $n$ large enough,
\begin{align*}
\mathbb{E}\big[|\partial^{*}\Bnb|^2\big]
\le \mathbb{E}\big[|\Bn|^2\big]
\le 4C_{\theta}|\mathbf{p}^{n}|^2.
\end{align*}
\end{proposition}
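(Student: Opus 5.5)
The plan is to bound the second moment of $\deg(\rho_1)$ through an injection (or bounded-to-one) argument. We relate triangulations whose root vertex has large degree to triangulations with one or two more vertices, i.e.\ to $\mathcal{T}_{\mathbf{p}}(n+1,g)$ and $\mathcal{T}_{\mathbf{p}}(n+2,g)$. Since $\deg(\rho_1)^2 \le 2\binom{\deg(\rho_1)}{2}+\deg(\rho_1)$, it is enough to control $\mathbb{E}[\deg(\rho_1)]$ and $\mathbb{E}[\binom{\deg(\rho_1)}{2}]$, that is, to count pairs $(t,E)$ with $t\in\mathcal{T}_{\mathbf{p}}(n,g)$ and $E$ a set of one or two distinguished oriented edges starting from $\rho_1$.

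For the first moment, given $(t,e)$ with $e$ an oriented edge out of $\rho_1$, we split $\rho_1$ along $e$ and the boundary root edge $e_1$. The corners around $\rho_1$ are partitioned into the two arcs between $e_1$ and $e$; one arc is kept at $\rho_1$ and the other is moved to a new vertex $\rho'$. We then insert a digon between $\rho_1$ and $\rho'$ and triangulate it with a new internal vertex, or simply by opening $e$ into a digon filled with two triangles. This yields a triangulation of the same $\mathbf{p}$-gon and genus with one more vertex, so by Euler it lies in $\mathcal{T}_{\mathbf{p}}(n+1,g)$. The construction is invertible after recording $O(1)$ bits, since the new vertex can be located as a marked degree-$2$ or degree-$3$ configuration adjacent to $\rho_1$. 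Special cases (loops at $\rho_1$, $e=e_1$, multiple edges) are handled separately and contribute the additive constant. This gives $\mathbb{E}[\deg(\rho_1)]\le c_0+c_1\tau_{\mathbf{p}}(n+1,g)/\tau_{\mathbf{p}}(n,g)$.

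For two marked edges $e,e'$ out of $\rho_1$, we apply the same splitting twice: the corners around $\rho_1$ are cut at $e_1,e,e'$ into three arcs, and two new vertices are created. The result lies in $\mathcal{T}_{\mathbf{p}}(n+2,g)$, again with bounded multiplicity. Collecting constants should produce the coefficients $4,5,16$; I will not aim to reproduce them exactly beyond checking that the bounded multiplicities match. The main obstacle is making the inverse map well defined: the surgery must keep the map connected, keep the genus unchanged (the split must be local around a single vertex, so that the same rotation system is preserved away from $\rho_1$), and keep the boundary vertex-simple. The inserted gadget must also be recognizable, so I will mark it with a distinguished oriented edge, which costs a factor that is absorbed into the stated coefficients. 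Multiple edges and loops based at $\rho_1$ need care, and I will treat them by the convention that splitting along a loop produces a multiple edge.

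For the uniform statement, we first note that for any $x\in\partial\Tngp$ on boundary $\partial_i$, rerooting the boundaries shows $D_x^n\overset{(d)}{=}\deg(\rho_1)$ for a triangulation of a permuted perimeter vector, which has the same counts $\tau_{\mathbf{p}}$. Then Lemma~\ref{bounded_ratio_vertices} applies with $\varepsilon$ chosen so that $\theta<\frac12-5\varepsilon$; for $n$ large, $|\mathbf{p}^n|\le\varepsilon n$ and $g_n/n\le\frac12-4\varepsilon$. It bounds $\tau_{\mathbf{p}}(n+2,g)/\tau_{\mathbf{p}}(n,g)\le C_\varepsilon^2$, so we get $C_\theta=4+5C_\varepsilon+16C_\varepsilon^2$. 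Finally, \eqref{bound} and Cauchy--Schwarz give $\mathbb{E}[|\Bn|^2]\le 4|\mathbf{p}^n|\sum_x\mathbb{E}[(D_x^n)^2]\le 4C_\theta|\mathbf{p}^n|^2$.
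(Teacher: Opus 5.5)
Your route differs from the paper's, and the idea behind it is viable, but the step that carries the whole proposition is not established as written. That step is showing that $(t,e)\mapsto t'$ has bounded multiplicity.

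The paper cuts $t$ along $e$. This changes the perimeter vector: it becomes $(p_1+2,\dots)$, or $\partial_1$ merges with $\partial_i$, or $\partial_1$ splits into two boundaries, possibly disconnecting the map. The paper therefore needs four cases, plus a peeling comparison showing that each of $A,B,C,D$ is at most $\tau_{\mathbf{p}}(n+1,g)$. Splitting $\rho_1$ while keeping $\mathbf{p}$ and $g$ fixed would avoid all of that. However, the mechanisms you give for inverting the map fail:

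\begin{itemize}
\item A degree-$2$ or degree-$3$ vertex adjacent to $\rho_1$ cannot be singled out with $O(1)$ bits. In $t'$, $\rho_1$ may have many such neighbours, so the fibre size is of order $\deg_{t'}(\rho_1)$, which is exactly the unbounded quantity you are estimating. You would only get $\mathbb{E}[\deg(\rho_1)]$ at size $n$ bounded by $\frac{\tau_{\mathbf{p}}(n+1,g)}{\tau_{\mathbf{p}}(n,g)}$ times the same expectation at size $n+1$, which does not close.
\item Marking the gadget by a distinguished oriented edge costs a factor up to $6(n+1)$, which cannot be absorbed into constants.
\item The gadget description is inconsistent: splitting $\rho_1$ and then filling a digon with a new internal vertex adds two vertices, not one.
\item Loops and multiple edges at $\rho_1$ cannot be put into the additive constant, since their number is unbounded. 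Only the two boundary half-edges at $\rho_1$ can be.
\end{itemize}

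The repair is to use the root edge itself as the marker, i.e.\ to perform an inverse edge contraction. Let $\rho_1^+$ and $y$ be the endpoints of $e_1$ and $e$.
\begin{itemize}
\item Split $\rho_1$ into two vertices. The new vertex $\rho'$ receives the arc of half-edges from $e$ round to $e_1$ that contains the corner of $\partial_1$; $\rho_1$ keeps the rest.
\item Duplicate $e_1$ and $e$, and fill the resulting quadrilateral $\rho_1\rho_1^+\rho'y$ with the diagonal $\rho_1\rho'$.
\item Reroot $\partial_1$ at $\rho'\rho_1^+$.
\end{itemize}
Then $\rho_1$ becomes internal. The boundary faces keep their degrees and remain vertex-simple and disjoint, with $\partial_1$ now passing through $\rho'$. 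The numbers of vertices, edges and faces increase by $1,3,2$, so $t'\in\mathcal{T}_{\mathbf{p}}(n+1,g)$. You recover $(t,e)$ by contracting the edge joining the root vertex of $t'$ to the apex of the triangle on the root edge. No marking is needed, and loops or multiple edges at $\rho_1$ pass through the same map.

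For pairs, label $e,e'$ so that $e'$ comes after $e$ in the cyclic order from $e_1$; then $e'$ sits at the new root vertex and you can split again. This gives at most $2\tau_{\mathbf{p}}(n+2,g)$ ordered pairs, hence
\[
\mathbb{E}[\deg(\rho_1)^2]\le 4+5\frac{\tau_{\mathbf{p}}(n+1,g)}{\tau_{\mathbf{p}}(n,g)}+2\frac{\tau_{\mathbf{p}}(n+2,g)}{\tau_{\mathbf{p}}(n,g)}.
\]
This implies the stated bound by a shorter argument than the paper's. Your treatment of the uniform bound via rerooting and Lemma~\ref{bounded_ratio_vertices}, and the final Cauchy--Schwarz step, match the paper.
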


\begin{proof}
We start by proving the second inequality assuming the first. Let $\frac{g_n}{n} \to \theta \in [0,\frac{1}{2})$ and $\mathbf{p}^n = (p_1^n,\cdots,p_{\ell_n}^n) \in (\mathbb{N}^{*})^{\ell_n}$ such that $|\mathbf{p}^n| = o(n)$. Using the translation invariance of $(\Tngp,\en_i)$ along the boundary $\partial_i \Tngp$, for any $1 \le i \le \ell_n$, we have 
\begin{align*}
\forall x \in \partial \Tngp, \qquad \mathbb{E}[(D_x^n)^2] \le 4 + 5 \frac{\tau_{\mathbf{p}^n}(n+1,g_n)}{\tau_{\mathbf{p}^n}(n,g_n)}+ 16\frac{\tau_{\mathbf{p}^n}(n+2,g_n)}{\tau_{\mathbf{p}^n}(n,g_n)}.
\end{align*}
We conclude by bounding the right-hand side using Lemma~\ref{bounded_ratio_vertices}. The last inequality follows from \eqref{bound} and Cauchy-Schwarz.

Now, let us prove the first inequality. Let $n, g \ge 1$ such that $n \ge 2g-1$ and let $\mathbf{p} = (p_1,\cdots,p_{\ell}) \in (\mathbb{N}^{*})^{\ell}$. Fix $t \in \mathcal{T}_{\mathbf{p}}(n,g)$. Let $\rho_1,\ldots,\rho_{\ell}$ denote the starting points of the root edges $e_1,\ldots,e_{\ell}$ on the boundaries of $t$. Fix an oriented edge $e$ such that $e \notin \partial t$ and the starting point of $e$ is $\rho_1$. Let $y$ denote the endpoint of $e$.

We denote by $t \setminus e$ the family of triangulations obtained by cutting along $e$. We define a mapping $\varphi(t,e)$ as follows.

\medskip

\noindent
\begin{minipage}[c]{0.4\textwidth}
\textbf{1.} If $y \notin \partial t$, then $t \backslash e \in \mathcal{T}_{(p_1+2,p_2,\ldots,p_{\ell})}(n+1,g)$, where the first boundary is still rooted at $e_1$. We define $\varphi(t,e)=t \backslash e$.
\end{minipage}
\hfill
\begin{minipage}[c]{0.5\textwidth}
\centering
\includegraphics[width=\linewidth]{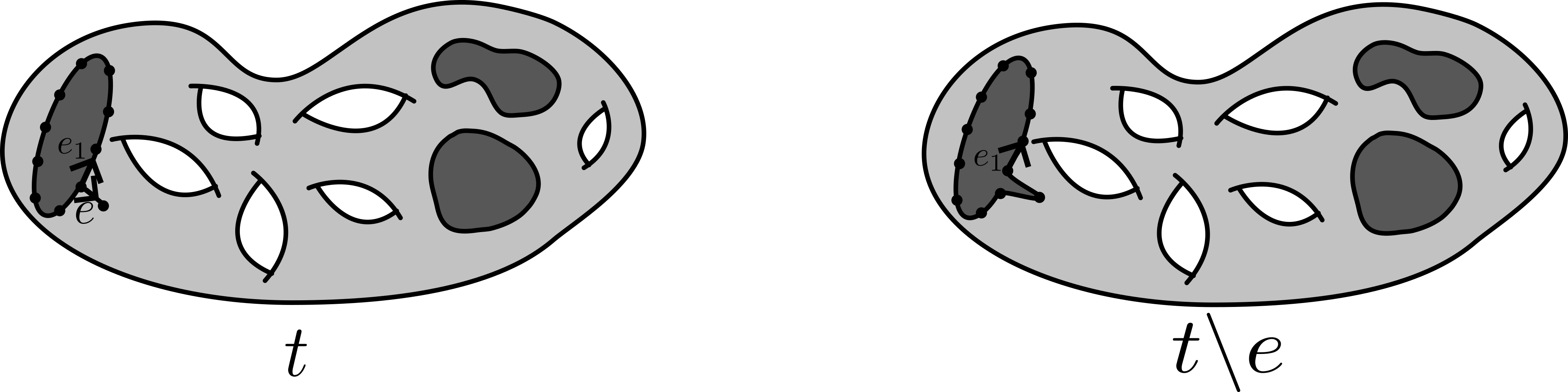}
\end{minipage}

\medskip

\noindent
\begin{minipage}[c]{0.4\textwidth}
\textbf{2.} If $y \in \partial_i t$ with $i \ge 2$, then cutting along $e$ defines a map
\[
t \backslash e \in \mathcal{T}_{(p_1+p_i+2,\ldots,p_{i-1},p_{i+1},\ldots,p_{\ell})}(n+2,g).
\]
The first boundary of $t \backslash e$ is rooted at $e_1$. Let $k \in \{0,\ldots,p_i-1\}$ be such that $y$ is the $k$-th vertex to the right of $\rho_i$ on $\partial_i t$. We define $\varphi(t,e)=(t \backslash e,i,k)$.
\end{minipage}
\hfill
\begin{minipage}[c]{0.5\textwidth}
\centering
\includegraphics[width=\linewidth]{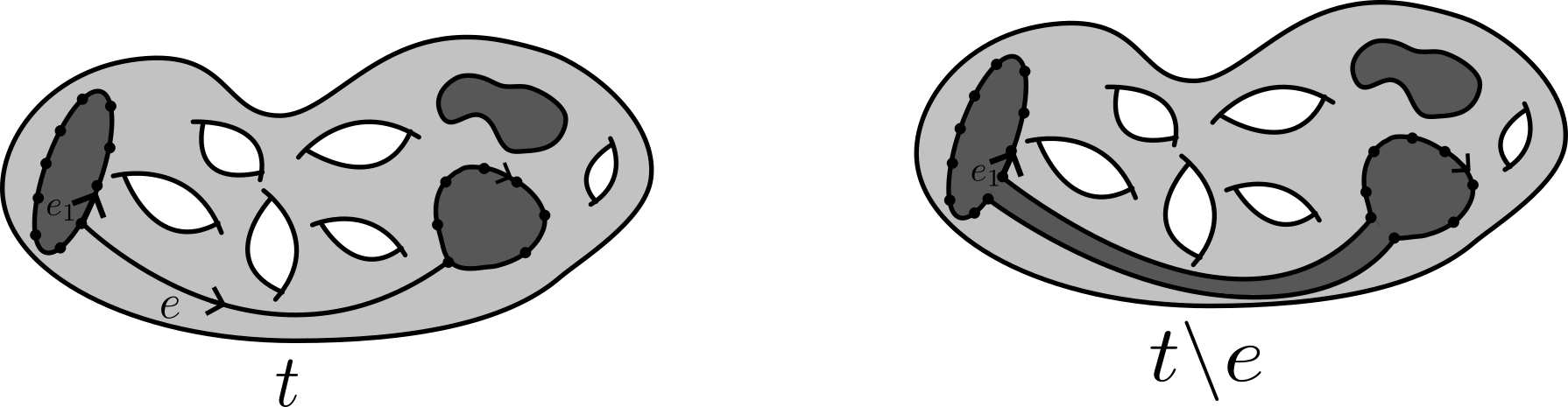}
\end{minipage}

\medskip

\noindent\textbf{3.} If $y \in \partial_1 t$, let $k \in \{0,\ldots,p_1-1\}$ be such that $y$ is the $k$-th vertex to the right of $\rho_1$ on $\partial_1 t$. There are two cases.

\medskip

\noindent
\begin{minipage}[c]{0.4\textwidth}
\textbf{3.1.} If internal faces of $t \backslash e$ form a connected subset of $(t \backslash e)^{*}$, then
\[
t \backslash e \in \mathcal{T}_{p_1-k+1,k+1,p_2,\ldots,p_{\ell}}(n,g-1).
\]
We define $\varphi(t,e) = t \backslash e$.
\end{minipage}
\hfill
\begin{minipage}[c]{0.5\textwidth}
\centering
\includegraphics[width=\linewidth]{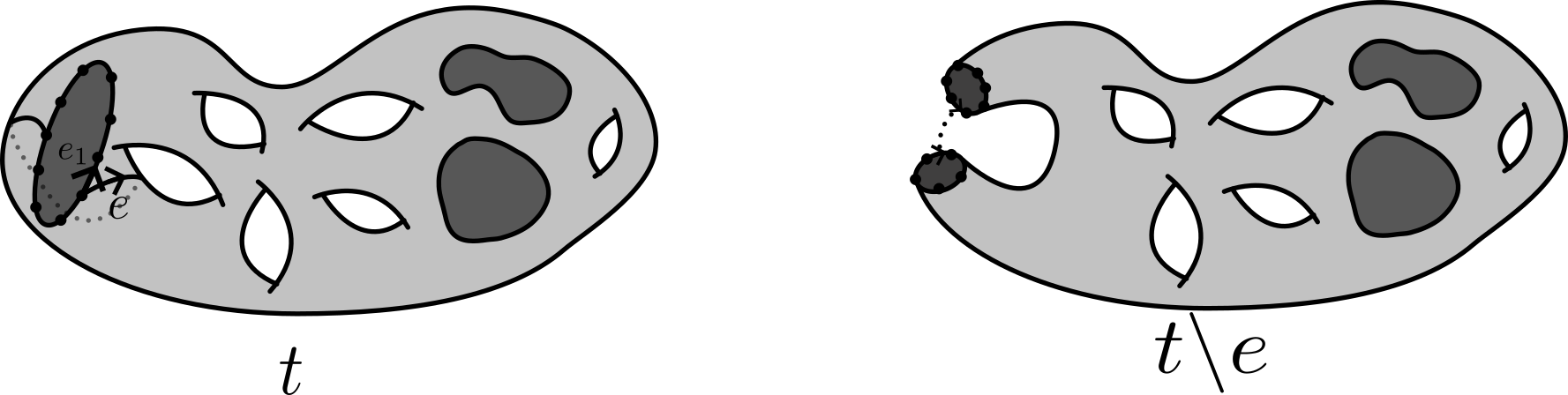}
\end{minipage}

\medskip

\noindent
\begin{minipage}[c]{0.4\textwidth}
\textbf{3.2.} If internal faces of $t \backslash e$ form two connected subset of $(t \backslash e)^{*}$, then $
t \backslash e = (t_1,t_2) \in
\mathcal{T}_{p_1-k+1,\mathbf{p}_I}(n_1,g_1)
\times
\mathcal{T}_{k+1,\mathbf{p}_J}(n_2,g_2),
$
where $n_1+n_2=n$, $g_1+g_2=g$, and $I \sqcup J=\{2,\ldots,\ell\}$. We define $\varphi(t,e)=(t_1,t_2)$.
\end{minipage}
\hfill
\begin{minipage}[c]{0.5\textwidth}
\centering
\includegraphics[width=\linewidth]{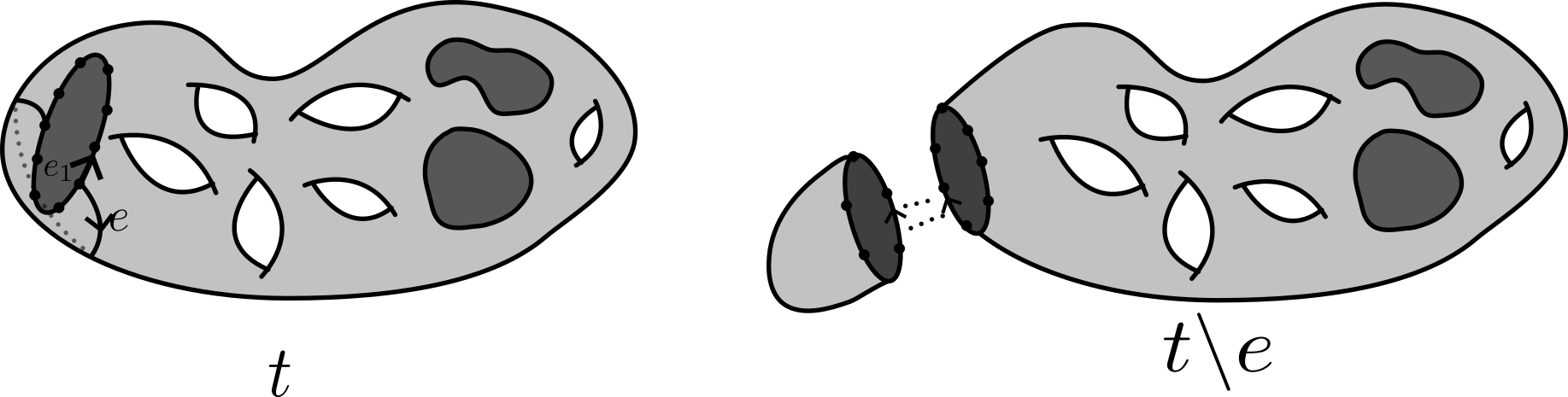}
\end{minipage}

\medskip

We claim that $\varphi$ defines an injective mapping. Indeed, given $\varphi(t,e)$, one can recover $e$ and then $t$ by gluing the appropriate edges. This yields
\begin{align}\label{bound_with_ABCD}
&\#\Big\{(t,e): t \in \mathcal{T}_{\mathbf{p}}(n,g),\ e \text{ oriented edge with starting point } \rho_1,\ e \notin \partial t\Big\} \\
&\le
\underbrace{\tau_{(p_1+2,p_2,\ldots,p_{\ell})}(n+1,g)}_{A}
+
\underbrace{\sum_{i=2}^{\ell} p_i \,
\tau_{(p_1+p_i+2,\ldots,p_{i-1},p_{i+1},\ldots,p_{\ell})}(n+2,g)}_{B}
\nonumber\\
&\quad+
\underbrace{\sum_{k=0}^{p_1-1}
\tau_{p_1-k+1,k+1,p_2,\ldots,p_{\ell}}(n,g-1)}_{C}
+
\underbrace{\sum_{k=0}^{p_1-1}
\sum_{\substack{n_1+n_2=n\\ g_1+g_2=g\\ I \sqcup J=\{2,\ldots,\ell\}}}
\tau_{p_1-k+1,\mathbf{p}_I}(n_1,g_1)
\tau_{k+1,\mathbf{p}_J}(n_2,g_2)}_{D}.
\nonumber
\end{align}

Now, we claim that we have the bound
\begin{align}\label{bound_ABCD}
\max(A,B,C,D) \le \tau_{\mathbf{p}}(n+1,g).
\end{align}

We only prove that $D \le \tau_{\mathbf{p}}(n+1,g)$. The argument is identical for the other terms. Consider the random triangulation $T_{n+1,g,\mathbf{p}}$ and the triangle incident to the root edge $e_1$. Let $\mathcal{D}$ denote the event that this triangle has its third vertex on $\partial_1$ and disconnects $T_{n+1,g,\mathbf{p}}$. The probability that this occurs is given by
\begin{align*}
1 \ge \mathbb{P}(\mathcal{D}) = \tau_{\mathbf{p}}(n+1,g)^{-1}
\sum_{k=0}^{p_1-1}
\sum_{\substack{n_1+n_2=n\\ g_1+g_2=g\\ I \sqcup J=\{2,\ldots,\ell\}}}
\tau_{p_1-k,\mathbf{p}_I}(n_1,g_1)
\tau_{k+1,\mathbf{p}_J}(n_2,g_2).
\end{align*}
Using the inequality
$\tau_{p_1-k+1,\mathbf{p}_I}(n_1,g_1)
\le
\tau_{p_1-k,\mathbf{p}_I}(n_1,g_1)$
yields \eqref{bound_ABCD}.

\medskip
This is enough to bound $\mathbb{E}[\deg_{T_{n,g,\mathbf{p}}}(\rho_1)]$. For the second moment, let us bound
\begin{align*}
\#\Big\{(t,e,e'): t \in \mathcal{T}_{\mathbf{p}}(n,g),\ e,e' \text{ are oriented edges starting at } \rho_1 \text{ and } e,e' \notin \partial t\Big\}.
\end{align*}
For $(t,e,e')$ with $e \neq e'$,  let us distinguish the four cases as in the first part of the proof for the pair $(t,e)$. In each case, we define the mapping $\psi(t,e,e')$:
\begin{itemize}
\item[$\mathbf{1}.$] $\psi(t,e,e')=\varphi(\varphi(t,e),e')$.
\item[$\mathbf{2}.$] If $\varphi(t,e)=(t \backslash e,i,k)$, then $\psi(t,e,e')=(\varphi(t \backslash e,e'),i,k)$.
\item[$\mathbf{3.1}.$] $\psi(t,e,e')=\varphi(\varphi(t,e),e')$.
\item[$\mathbf{3.2}.$] If $\varphi(t,e)=(t_1,t_2)$ and $e' \in t_1$, then $\psi(t,e,e')=(\varphi(t_1,e'),t_2)$. If $e' \in t_2$, then $\psi(t,e,e')=(\varphi(t_2,e'),t_1)$.
\end{itemize}
If $e=e'$, we simply set $\psi(t,e,e')=\varphi(t,e)$. It is straightforward to check that the function $\psi$ is injective.

Using \eqref{bound_ABCD}, each resulting case contributes at most $\tau_{\mathbf{p}}(n+2,g)$, yielding
\begin{align*}
\#\Big\{(t,e,e'): t \in \mathcal{T}_{\mathbf{p}}(n,g),\ e,e' &\text{ are oriented edges with starting point } \rho_1 \text{ and } e,e' \notin \partial t \text{ and } e \neq e'\Big\}\\&
\le 16 \tau_{\mathbf{p}}(n+2,g).
\end{align*}

Therefore,
\begin{align*}
\mathbb{E}[(D_{\rho_1}^n)^2]
&= 4
+\frac{\#\Big\{(t,e,e'): t \in \mathcal{T}_{\mathbf{p}}(n,g),\ e,e' \text{ are oriented edges with starting point } \rho_1,\ e,e' \notin \partial t,\ e\neq e'\Big\}}{\tau_{\mathbf{p}}(n,g)}\\
&+5\frac{\#\Big\{(t,e): t \in \mathcal{T}_{\mathbf{p}}(n,g),\ e \text{ are oriented edges with starting point } \rho_1,\ e \notin \partial t\Big\}}{\tau_{\mathbf{p}}(n,g)}\\
&\le 4 + 5 \frac{\tau_{\mathbf{p}}(n+1,g)}{\tau_{\mathbf{p}}(n,g)}+ 16\frac{\tau_{\mathbf{p}}(n+2,g)}{\tau_{\mathbf{p}}(n,g)},
\end{align*}
where the term $4$ accounts for pairs of boundary edges incident to $\rho_1$ and the factor $5$ counts the cases where $e = e' \notin \partial t$ or $e \in \partial t$ and $e' \notin \partial t$ or $e' \in \partial t$ and $e \notin \partial t$. This concludes the proof.
\end{proof}
In the next section, we aim to establish a first moment estimate that provides the correct growth rate for the quantity $\displaystyle \frac{|\partial^{*}\Bnb|}{|\mathbf{p}^{n}|}$.

\subsection{First moment estimate}\label{section_first_moment}
This section is dedicated to proving the next proposition. It provides an $L^1$ convergence for $
\frac{|\partial^{*}\Bnb|}{|\mathbf{p}^{n}|}$. Let us recall that for a triangulation with holes $t$, the internal vertices (resp. edges) are the vertices (resp. edges) that do not lie on a hole.
\begin{proposition}\label{growth_rate_first_moment}
For any $\frac{g_n}{n} \to \theta \in [0,\frac{1}{2})$ and $\mathbf{p}^n = (p_1^n,\cdots,p_{\ell_n}^n) \in (\mathbb{N}^{*})^{\ell_n}$ such that $|\mathbf{p}^n| = o(n)$ and $\ell_n = o(|\mathbf{p}^n|)$, we have 
\begin{align*}
\mathbb{E}\bigg[
\indi{\frac{|\partial^{*}\Bnb|}{|\Bnb|}\ge \ct}
\bigg|\frac{|\partial^{*}\Bnb|}{|\mathbf{p}^{n}|} - m_{\theta}^{-1}\bigg|
\bigg]
\underset{n \to +\infty}{\longrightarrow} 0.
\end{align*}
\end{proposition}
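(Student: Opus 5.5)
The plan is to combine the local convergence of Theorem~\ref{local_limit_boundary} with the uniform integrability supplied by Proposition~\ref{second_moment_bounded_degree}. The quantity $|\partial^{*}\Bn|$ can be written as a sum over boundary vertices (or boundary edges) $x$ of local contributions $Z_x$. Take $Z_x$ to be the number of hole edges of $\Bn$ attributed to $x$ by a fixed local rule, for instance the hole edges of triangles incident to $x$ counted with weight $1/(\text{number of their boundary vertices})$. Each $Z_x$ is a function of a bounded neighbourhood of $x$ (essentially $B_2(\Tngp,x)$). By Theorem~\ref{local_limit_boundary}, and since $\ell_n = o(|\mathbf{p}^n|)$ makes $|\mathbf{p}^n|/\ell_n\to\infty$, the law of $Z_{x}$ at a uniform boundary vertex converges to the law of the corresponding quantity $Z$ in $\mathbb{H}_{\lambda(\theta)}$.

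\emph{Step 1: the limiting mean.} Compute $\mathbb{E}[Z]=m_\theta^{-1}$ in $\mathbb{H}_{\lambda(\theta)}$. This uses the peeling transitions \eqref{peeling_transitions}. Peel every edge of the infinite boundary once, from left to right, and record at each step whether a new vertex is created (event $\mathbf{C}$) or a finite bubble is swallowed (events $\mathbf{L}$, $\mathbf{R}$). The resulting layer $B_1^+(\mathbb{H},\partial\mathbb{H})$ has a new boundary whose length per unit of old boundary is an explicit function of $\lambda$ and $h$. Evaluating it with \eqref{formula_wlambda} and \eqref{formula_W} should give $\frac{2h}{1-2h-\sqrt{1-4h}}=m_\theta^{-1}$; this layer growth rate is the source of $m_\theta$ in the PSHT. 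Translation invariance of $\mathbb{H}_\lambda$ along its boundary reduces the per-vertex mean to this ergodic average.

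\emph{Step 2: first moment and concentration for $\Bn$.} By exchangeability along each boundary, $\mathbb{E}[|\partial^*\Bn|]/|\mathbf{p}^n| = \mathbb{E}[Z_{x_n}]$ with $x_n$ uniform on $\partial\Tngp$. Since $Z_x\le 2D_x^n$, Proposition~\ref{second_moment_bounded_degree} gives uniform integrability, so $\mathbb{E}[Z_{x_n}]\to m_\theta^{-1}$. For $L^1$ concentration of the ratio, I would bound the variance of $\sum_x Z_x/|\mathbf{p}^n|$. Take two independent uniform boundary points $x,x'$. The pair $(\Tngp,x,x')$ should converge jointly to two independent copies of $\mathbb{H}_\lambda$. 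This follows either from the two-point version implicit in \cite{lions2026locallimitsuniformtriangulations}, or by rerooting: the two points are typically far apart, and conditionally on the neighbourhood of $x$ the remaining map is again a high-genus triangulation with boundaries satisfying the hypotheses of Theorem~\ref{local_limit_boundary}. Then $\mathrm{Cov}(Z_x,Z_{x'})\to0$. Uniform integrability of $Z_xZ_{x'}$ comes from Cauchy--Schwarz and the second-moment bound, so $|\partial^*\Bn|/|\mathbf{p}^n|\to m_\theta^{-1}$ in $L^2$, hence in $L^1$.

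\emph{Step 3: passing from $\Bn$ to $\Bnb$.} The sets $\partial^*\Bnb$ and $\partial^*\Bn$ differ by the boundaries of the small components that were glued in. On the indicator event $|\partial^*\Bnb|\ge \ct|\Bnb|$, these glued components have total mass and perimeter that must be shown to be $o(|\mathbf{p}^n|)$ in $L^1$. One argument is that a small component of $\Tngp\setminus\Bn$ whose hole edges all lie on $\partial^*\Bn$ corresponds, in the local limit, to a finite bubble enclosed by $\Bn$. In $\mathbb{H}_\lambda$, however, $\mathbb{H}_\lambda\setminus B_1^+$ is one-ended, so such bubbles already belong to the filled holes of the peeling. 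A local-limit argument therefore shows that the contribution of hole edges of $\Bn$ not adjacent to the unique large component has vanishing density. Combined with the domination $|\partial^*\Bnb|\le|\Bn|\le 2\sum_x D^n_x$, this gives $\mathbb{E}[\indi{\cdot}\,\big||\partial^*\Bnb|-|\partial^*\Bn|\big|]/|\mathbf{p}^n|\to0$. I expect this step, together with the decorrelation in Step 2, to be the main obstacle, because local convergence alone does not rule out a small component with large boundary. I would first try to use the indicator together with counting bounds of the type in Lemma~\ref{add_new_boundary} and Proposition~\ref{multicurves_few_boundaries} to exclude macroscopic glued components.
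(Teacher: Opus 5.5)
\textbf{Verdict.} Your proposal has a genuine gap: it conflates the strong ball $B_1^{+}$ with the hull $B_1^{\bullet}$ in Steps 1--3.

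\textbf{The limiting quantity is misidentified.} The peeling exploration of $\mathbb{H}_{\lambda(\theta)}$ fills every finite hole it creates. So what it computes is the hole of the \emph{hull} $B_1^{\bullet}(\mathbb{H}_{\lambda(\theta)},\partial\mathbb{H}_{\lambda(\theta)})$, i.e.\ the edges adjacent to the unique infinite component. The constant $m_\theta^{-1}$ is the density of that set, not of $\partial^{*}B_1^{+}$. One-endedness only says that $\mathbb{H}_{\lambda(\theta)}\setminus B_1^{+}$ has exactly one infinite component; it also has finite components with positive density. For instance, an $\mathbf{R}_{k,0}$ step has probability $w_{\lambda}(1)>0$. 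It creates a loop at a boundary vertex $v$ enclosing a Boltzmann triangulation of the $1$-gon. With positive probability this filling contains triangles not incident to $v$. The edges separating them from the triangles at $v$ lie on $\partial^{*}B_1^{+}$ but not on the hole of the hull, and since peeling steps are i.i.d.\ this happens with positive density. Consequently:
\begin{itemize}
\item your $\mathbb{E}[Z]$ is strictly larger than $m_\theta^{-1}$;
\item the claimed limit $|\partial^{*}\Bn|/|\mathbf{p}^n|\to m_\theta^{-1}$ in Step~2 is false;
\item the claim of Step~3 that $|\partial^{*}\Bn|-|\partial^{*}\Bnb|=o(|\mathbf{p}^n|)$ is also false. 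Since $\partial^{*}\Bnb\subset\partial^{*}\Bn$, the difference is a positive fraction of $|\mathbf{p}^n|$.
\end{itemize}

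\textbf{The missing idea: matching local and global hulls.} Once this is corrected, the real difficulty appears. The hull is not a local functional of the finite map, because a component of $\Tngp\setminus\Bn$ is glued exactly when it has at most $n$ triangles. The paper therefore works with hulls of boundary windows, $\mathcal{H}_a(t,e)=B_1^{\bullet}(t,I_a(t,e))$, and the deterministic sandwich of Lemma~\ref{inequalities}.
\begin{itemize}
\item The upper bound $|\partial^{*}\Bnb|\le\sum_e|J_a(t,e)|/|I_a(t,e)|$ is automatic.
\item The lower bound needs $\overset{\circ}{J_a}(t,e)\subset\partial^{*}\Bnb$ for most boundary edges $e$. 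That is, the side that looks infinite from the window must really be the giant component, not a small component closed off far away.
\end{itemize}
This is Proposition~\ref{J_lies_on_hole_hull}, and it is exactly where the indicator is used. Under $\mathbf{GOOD}(a,\delta)$, each failure places a ball $B^{+}_{a^{1/2}}(t\setminus\mathcal{H}_a(t,e),u)$ with at least $a^4$ triangles inside $\Bnb$. Failures at a $\delta$-fraction of boundary edges yield of order $\delta|\mathbf{p}^n|/a^2$ pairwise disjoint such balls, because windows far apart along $\partial t$ are far apart in $t$ by item~3. This forces $|\Bnb|$ to be at least a constant times $\delta a^2|\mathbf{p}^n|$. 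On the indicator event, however, $|\Bnb|\le \ct^{-1}|\partial^{*}\Bnb|\le \ct^{-1}a|\mathbf{p}^n|$, where the last inequality holds with probability at least $1-C_\theta a^{-2}$ by Proposition~\ref{second_moment_bounded_degree}. The two bounds contradict each other for large $a$. The counting tools you suggest (Lemma~\ref{add_new_boundary}, Proposition~\ref{multicurves_few_boundaries}) do not address this, since the wrongly glued components need not be individually macroscopic.

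\textbf{Decorrelation is unnecessary.} The two-point decorrelation you need in Step~2 is not supplied by Theorem~\ref{local_limit_boundary}, and the paper avoids it. For fixed large $a$, the window ratio $|J_a|/|I_a|$ is already concentrated near $m_\theta^{-1}$ in the limit, by the law of large numbers for the peeling increments. A Markov bound on the density of bad windows, combined with the $L^2$ bound, then gives convergence in probability, hence in $L^1$.
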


We also state an analogue result where the $L^1$ convergence is replaced by convergence in probability. 

\begin{proposition}\label{growth_rate}
Let $\frac{g_n}{n} \to \theta \in [0,\frac{1}{2})$ and $\mathbf{p}^n = (p_1^n,\cdots,p_{\ell_n}^n) \in (\mathbb{N}^{*})^{\ell_n}$ such that $|\mathbf{p}^n| = o(n)$ and $\ell_n = o(|\mathbf{p}^n|)$. For any $\delta > 0$, we have
\begin{align*}
\mathbb{P}\bigg(\frac{|\partial^{*}\Bnb|}{|\Bnb|}\ge \ct,
\ \bigg|\frac{|\partial^{*}\Bnb|}{|\mathbf{p}^n|} - m_{\theta}^{-1}\bigg| \ge \delta
\bigg)
\underset{n \to +\infty}{\longrightarrow} 0.
\end{align*}
\end{proposition}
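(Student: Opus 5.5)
The statement to prove is Proposition~\ref{growth_rate}. I would derive it directly from Proposition~\ref{growth_rate_first_moment} through Markov's inequality. Write
\[
X_n := \indi{\frac{|\partial^{*}\Bnb|}{|\Bnb|}\ge \ct}\bigg|\frac{|\partial^{*}\Bnb|}{|\mathbf{p}^{n}|} - m_{\theta}^{-1}\bigg| \ge 0 .
\]
The event in the statement is contained in $\{X_n \ge \delta\}$. On it the indicator equals $1$, so $X_n$ is exactly the absolute deviation, which is at least $\delta$. Markov's inequality then gives
\[
\mathbb{P}\bigg(\frac{|\partial^{*}\Bnb|}{|\Bnb|}\ge \ct,\ \bigg|\frac{|\partial^{*}\Bnb|}{|\mathbf{p}^n|} - m_{\theta}^{-1}\bigg| \ge \delta\bigg) \le \frac{\mathbb{E}[X_n]}{\delta}.
\]
The right-hand side tends to $0$ by Proposition~\ref{growth_rate_first_moment}, whose hypotheses ($g_n/n\to\theta\in[0,\frac12)$, $|\mathbf{p}^n|=o(n)$, $\ell_n=o(|\mathbf{p}^n|)$) are exactly those assumed here. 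This completes the argument.

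The only point to check is that the two propositions use the same objects $\Bnb$, $\ct$ and $m_\theta$ under the same asymptotic regime, and they do. There is therefore no real obstacle at this step.

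If instead one wanted Proposition~\ref{growth_rate} as the primary result, the substantive work would lie elsewhere. One would first prove convergence in probability using Theorem~\ref{local_limit_boundary}, which is applicable because $|\mathbf{p}^n|/\ell_n\to\infty$. The plan would be to estimate the expected hole length contributed per boundary edge via the half-plane peeling probabilities \eqref{peeling_transitions}, and to obtain concentration from a second-moment computation over pairs of boundary edges that are asymptotically independent. One would then upgrade to $L^1$ convergence by uniform integrability, using the bound $\mathbb{E}[|\partial^*\Bnb|^2]\le 4C_\theta|\mathbf{p}^n|^2$ from Proposition~\ref{second_moment_bounded_degree}. Since Proposition~\ref{growth_rate_first_moment} is already available, however, the Markov argument above suffices.
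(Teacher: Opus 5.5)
Your Markov step is correct as an implication; the paper itself calls it immediate. As a proof of Proposition~\ref{growth_rate}, however, it is circular. In the paper, Proposition~\ref{growth_rate_first_moment} is not established independently. Its proof consists exactly of combining Proposition~\ref{growth_rate} (convergence in probability) with the $L^2$ bound of Proposition~\ref{second_moment_bounded_degree}. So it is not ``already available'', and all the content of the statement lies in the part you set aside.

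Your fallback sketch does not supply that content. Theorem~\ref{local_limit_boundary} only describes the neighbourhood of a single uniform boundary edge, so a concentration argument over pairs of boundary edges would need a two-point local limit that is not at hand. More importantly, the sketch never explains why the hole discovered locally near a boundary edge (what peeling in $\mathbb{H}_{\lambda(\theta)}$ computes) actually lies on $\partial^{*}\Bnb$. In general it need not: components of $t\setminus B_1^{+}(t,\partial t)$ with at most $n$ triangles are glued into the hull and can swallow it. Your sketch never uses the indicator $|\partial^{*}\Bnb|/|\Bnb|\ge \ct$, and that indicator is precisely what rules this out.

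The paper avoids both difficulties as follows. A deterministic double counting (Lemma~\ref{inequalities}) gives two bounds on $|\partial^{*}\Bnb|$:
\begin{itemize}
\item above by $\sum_{e\in\partial t}|J_a(t,e)|/|I_a(t,e)|$;
\item below by $(1-3\delta)m_{\theta}^{-1}$ times the number of boundary edges $e$ such that $(t,e)$ satisfies $\mathbf{GOOD}(a,\delta)$ and $\overset{\circ}{J_a}(t,e)\subset\partial^{*}\Bnb$.
\end{itemize}
Both sides are sums over boundary edges. Since $\en$ is uniform, taking expectations reduces everything to one-point estimates at $\en$, and Markov's inequality replaces any concentration argument.

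For the upper bound, Proposition~\ref{proba_B_goes_to_1}, Cauchy--Schwarz and the degree second-moment bound control the excess term. For the lower bound, the only non-local input is Proposition~\ref{J_lies_on_hole_hull}. On the indicator event, outside an event of probability $O(a^{-2})$, one has $|\Bnb|\le \ct^{-1}|\partial^{*}\Bnb|\le \ct^{-1}a|\mathbf{p}^n|$. Suppose a proportion $\delta/4$ of boundary edges were good but had $\overset{\circ}{J_a}\nsubseteq\partial^{*}\Bnb$. Then Items~2 and~3 of $\mathbf{GOOD}(a,\delta)$ would produce about $\frac{\delta}{16a^2}|\mathbf{p}^n|$ vertex-disjoint regions of size at least $a^4$ inside the hull, which is impossible for $a$ large. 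These are the steps a proof of the statement must provide.
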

It is immediate that Proposition~\ref{growth_rate_first_moment} implies Proposition~\ref{growth_rate}. Let us prove Proposition~\ref{growth_rate_first_moment} assuming Proposition~\ref{growth_rate}.
\begin{proof}
By Proposition~\ref{second_moment_bounded_degree}, the random variable $\frac{|\partial^{*}\Bnb|}{|\mathbf{p}^{n}|}$ is bounded in $L^2$. By Proposition~\ref{growth_rate}, the random variable $
\indi{\frac{|\partial^{*}\Bnb|}{|\Bnb|}\ge \ct}
\bigg|\frac{|\partial^{*}\Bnb|}{|\mathbf{p}^{n}|} - m_{\theta}^{-1}\bigg|$ converges in probability to $0$. Putting these two facts together, it also converges in $L^1$ to $0$ which concludes the proof.
\end{proof}

Note that working under the event $\displaystyle \frac{|\partial^{*}\Bnb|}{|\Bnb|}\ge \ct$ will be sufficient for our needs since this will be applied in Section~\ref{section_growth_of_hulls} to $\Tng \setminus  B_r^{\bullet}(\Tng,\en)$ under $\mathbf{Iso}_n(\frac{\varepsilon}{2})$ where $K_{\theta}\log(n) \le |B_r^{\bullet}(\Tng,\en)| \le n^{1-\varepsilon}$. In this context, the condition $\displaystyle \frac{|\partial^{*}\Bnb|}{|\Bnb|}\ge \ct$ rewrites as $\displaystyle \frac{|\partial^{*}B_r^{\bullet}(\Tng,\en)|}{|B_{r+1}^{\bullet}(\Tng,\en)|} \ge \ct,$ which is satisfied under $\mathbf{Iso}_n(\frac{\varepsilon}{2})$ (see Theorem~\ref{isoperimetric_inequalities}).\\

For the rest of the section, we fix $\frac{g_n}{n} \to \theta \in [0,\frac{1}{2})$ and $\mathbf{p}^n = (p_1^n,\cdots,p_{\ell_n}^n) \in (\mathbb{N}^{*})^{\ell_n}$ such that $|\mathbf{p}^n| = o(n)$ and $\ell_n = o(|\mathbf{p}^n|)$. We denote by $\en$ an oriented edge chosen uniformly at random on $\partial_1 \cup \cdots \cup \partial_{\ell_n}$ such that the boundary lies to the right.
\paragraph*{Sketch of proof.}
Let us give the main ideas of the proof of Proposition~\ref{growth_rate}.
Let us write $\en$ for an oriented edge chosen uniformly at random on
$\partial_1 \cup \cdots \cup \partial_{\ell_n}$. Let $i \in \{1,\ldots,\ell_n\}$ be such that $\en \in \partial_i$.
Consider a segment $I$ of length $2a+1$ on $\partial_i$, centered at $\en$ and where $a$ can be thought of as a large constant. Using Theorem~\ref{local_limit_boundary}, we have $(\Tngp,\en) \overset{(d)}{\underset{\mathrm{loc}}{\longrightarrow}} \mathbb{H}_{\lambda(\theta)}$. Thus, we have $B_1^{\bullet}(\Tngp,I) \overset{(d)}{\approx} B_1^{\bullet}(\mathbb{H}_{\lambda(\theta)},I) $. Moreover, for $r\ge 0$ large enough, with high probability, we have $|\partial^{*}B_1^{\bullet}(\mathbb{H}_{\lambda(\theta)},I) 
\setminus \partial \mathbb{H}_{\lambda(\theta)}|
\approx 2m_{\theta}^{-1} a$. In words, this means that the growth rate of $\mathbb{H}_{\lambda(\theta)}$ is $m_{\theta}^{-1}$. Finally, covering $\partial \Tngp$ with disjoint segments
$I_1,\ldots,I_k$ and applying this estimate to each segment should yield $|\partial^{*}\Bnb| \approx m_{\theta}^{-1}|\mathbf{p}^{n}|$. Although this strategy is conceptually simple, a pathological situation
must be excluded with high probability. Indeed, it is not clear that
	\[
	\partial^{*}B_1^{\bullet}(\Tngp,I)
	\setminus \partial \Tngp
	\subset \partial^{*} \Bnb.
	\]	
	Indeed, it may happen that
	$\partial^{*}B_1^{\bullet}(\Tngp,I)$
	consists only of internal edges of $\Bnb$ (see Figure~\ref{segment_exploration}).
	Therefore, the structure of $\partial^{*}\Bnb$
	cannot be deduced directly from that of
	$\partial^{*}B_1^{\bullet}(\Tngp,I)$.
	This issue is handled in Proposition~\ref{J_lies_on_hole_hull}.
\begin{figure}[H]
\centering
\includegraphics[scale=0.5]{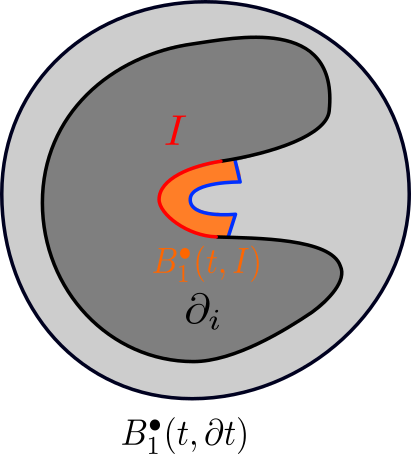}
\caption{We represent a triangulation of the $\mathbf{p}$-gon $t$, the hull $B_1^{\bullet}(t,\partial t)$ in light grey, a segment $I$ of length $2a+1$ lying on $\partial_i t$ and $B_1^{\bullet}(t,I)$ in orange. On this example, $B_1^{\bullet}(t,I) \cap \partial^{*} B_1^{\bullet}(t,\partial t) = \emptyset$, i.e. the blue segment does not lie on the hole of $ B_1^{\bullet}(t,\partial t)$.  }
\label{segment_exploration}
\end{figure}
Let us describe how Section~\ref{section_first_moment} is organised:
\begin{itemize}
	\item[$\bullet$] In Section~\ref{section_local_limit}, we use Theorem~\ref{local_limit_boundary} to show that $(\Tngp,\en)$ typically has (locally) the correct growth rate $m_{\theta}^{-1}$. We also show that a typical small neighbourhood of $(\Tngp,\en)$ behaves nicely in a way that we specify later (see Definition~\ref{definition_GOOD}).
	\item[$\bullet$] In Section~\ref{section_excluding1}, we show that the pathological situation described above does not occur with high probability.
	\item[$\bullet$] In Section~\ref{section_concluding_growth_rate}, we prove Proposition~\ref{growth_rate_first_moment}.
\end{itemize}

\subsubsection{The local convergence result}\label{section_local_limit}
Let $t \in \mathcal{T}_{n,g_n,\mathbf{p}^{n}}$ and $e$ be an oriented edge lying on $\partial_i$ for some $1 \le i \le \ell_n$. This section is dedicated to providing a notion of a 'good' neighbourhood for $(t,e)$ (Definition~\ref{definition_GOOD}). Then, using Theorem~\ref{local_limit_boundary}, we prove that $(\Tngp,\en)$ typically satisfies this condition (Proposition~\ref{proba_B_goes_to_1}).

Let us make this more precise. We denote by $x$ the starting point of $e$.
For $a \ge 0$, if $|\partial_i| \le 2a$, we set $I_a(t,e) := \partial_i$,
otherwise, $I_a(t,e)$ is defined as the segment of $\partial_i$ consisting of
edges whose endpoints are at distance at most $a$ from $x$ along $\partial_i$ (see the red segment in Figure~\ref{def_Hrte}).
We write (see Figure~\ref{def_Hrte})
\[
\Hate := B_1^{\bullet}(t,I_a(t,e)),
\qquad
\Jate := \partial^{*}\Hate \setminus \partial t.
\]

\begin{definition1}
For any $a,\delta > 0$, we say that $(t,e)$ satisfies $\mathbf{GOOD}_{-}(a,\delta)$
if $\Hate$ is a planar triangulation with one hole,
$\Jate$ forms a segment, and
\[
|\Jate| \in \big[(1-\delta)m_{\theta}^{-1}2a,(1+\delta)m_{\theta}^{-1}2a\big].
\]
\end{definition1}

\begin{figure}[H]
\centering
\includegraphics[scale=0.4]{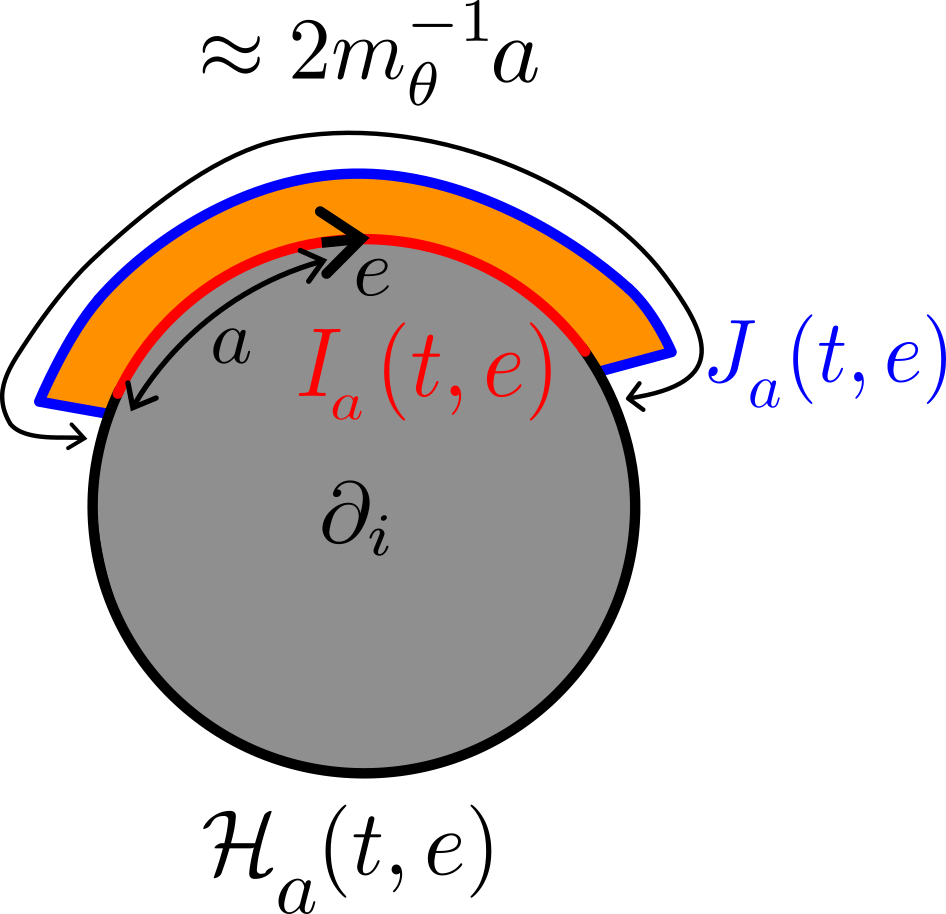}
\caption{On this example, we represent a triangulation $(t,e)$ that satisfies $\mathbf{GOOD}_{-}(a,\delta)$. }
\label{def_Hrte}
\end{figure}

Assume that $(t,e)$ satisfies $\mathbf{GOOD}_{-}(a,\delta)$. An immediate consequence is that $\Hate$ does not intersect any of the boundaries $\partial_j$ with $j \neq i$. We denote by $\Jatec$ the subset of $\Jate$ obtained by
removing the $a^{3/4}$ leftmost and rightmost edges (see the blue segment on Figure~\ref{segment_def}).

\begin{definition1}\label{definition_GOOD}
For any $a,\delta > 0$, we say that $(t,e)$ satisfies $\mathbf{GOOD}(a,\delta)$ if
the following three conditions hold:
\begin{enumerate}
	\item\label{it1}
	$(t,e)$ satisfies $\mathbf{GOOD}_{-}(a,\delta)$.
	\item\label{it2}
	Let $t' = t \setminus \Hate$.
	For any edge $e' \in \Jatec$, we have
	$|B_{a^{1/2}}^+(t',e')| \ge a^{4}$ and
	$d_{t'}(e',\partial t \cap \partial t') > a^{\frac{2}{3}}$.
	\item\label{it3}
	For all vertex $y \in \partial t$, if $d_{\partial t}(e,y) \ge a^2$, then we have
	$d_t(e,y) \ge 3a$.
\end{enumerate}
\end{definition1}
Let us comment on this definition. Items~\eqref{it2} and~\eqref{it3} are the key ingredients to prove that $\overset{\circ}{J_a}(t,e) \subset \partial^{*}B_1^{\bullet}(t,\partial t)$. See Proposition~\ref{J_lies_on_hole_hull} for details. In Item~\ref{it2} and Item~\ref{it3}, the choices for the values $a^4$, $a^{\frac{2}{3}}$ and $3a$ are arbitrary. The important point is that they satisfy the following asymptotic relations: $a^3 = o (a^4)$, $a^{\frac{2}{3}} = o(a^{\frac{3}{4}})$ and $2a+2a^{\frac{1}{2}} < 3a$.

\begin{figure}[H]
\centering
\includegraphics[scale=0.4]{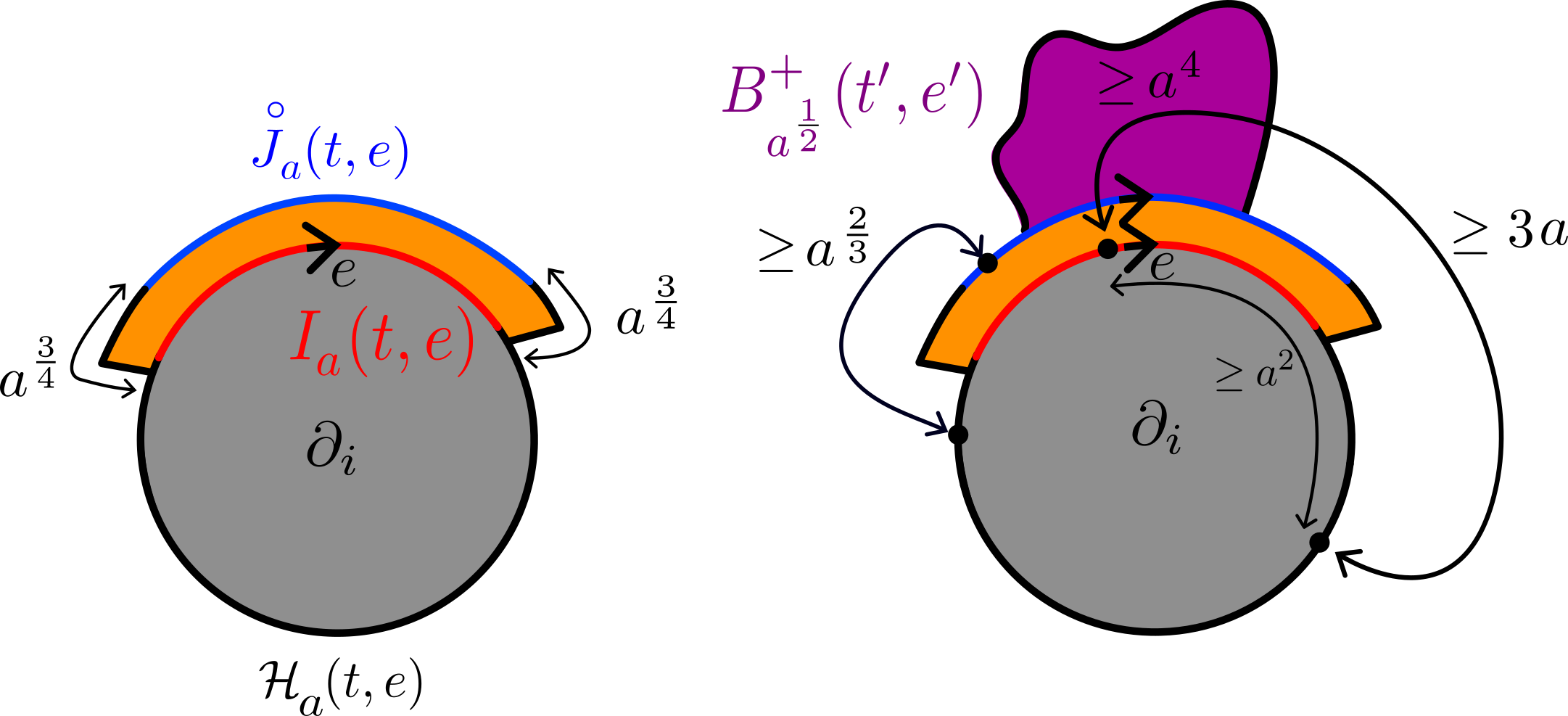}
\caption{On the left: we illustrate the segment $\Jatec$ obtained by removing the $a^{\frac{3}{4}}$ rightmost and leftmost edges from $\Jate$. On the right: we represent a triangulation $(t,e)$ that satisfies $\mathbf{GOOD}(a,\delta)$ and illustrate the different points.}
\label{segment_def}
\end{figure}

For any $a,\delta > 0$, we introduce $\mathcal{B}_n(a,\delta)$ as the event on which $(\Tngp,\en)$ satisfies $\mathbf{GOOD}_{-}(a,\delta)$. We define $\mathcal{C}_n(a,\delta)$ as the event on which $(\Tngp,\en)$ satisfies $\mathbf{GOOD}(a,\delta)$. In the next proposition, we prove that for $a$ large enough, these events occur with high probability.

\begin{proposition}\label{proba_B_goes_to_1}
For any $\delta,\eta > 0$, there exists $A \ge 0$ such that for all $a \ge A$ and for $n$ large enough,
\begin{align*}
\mathbb{P}(\mathcal{C}_n(a,\delta)) \ge 1-\eta.
\end{align*}
\end{proposition}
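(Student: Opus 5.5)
The plan is to transfer each of the three conditions in $\mathbf{GOOD}(a,\delta)$ to the half-plane limit $\mathbb{H}_{\lambda(\theta)}$ via Theorem~\ref{local_limit_boundary}, and then prove the corresponding statements in $\mathbb{H}_{\lambda(\theta)}$ using its spatial Markov property and the explicit peeling transitions \eqref{peeling_transitions}.

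Fix $a$. Items~\ref{it1} and~\ref{it2} only involve the ball $B_R(\Tngp,\en)$ for some $R=R(a)$, for instance $R = 2a^2$. The one exception is the alternative $|\partial_i|\le 2a$, but Theorem~\ref{local_limit_boundary} rules this out with high probability, since the boundary of the limit is infinite. Hence $\mathbf{1}_{\mathbf{GOOD}_-}$ and Item~\ref{it2} are, with high probability, functions of $B_R$. By local convergence in distribution,
\[
\mathbb{P}(\text{Items \ref{it1},\ref{it2}}) \to \mathbb{P}\bigl((\mathbb{H}_{\lambda(\theta)},\text{root}) \text{ satisfies them}\bigr).
\]
In the limit the relevant objects are as follows. $\mathcal{H}_a$ is the hull of the segment $I$ of length $2a$, and in the one-ended half-plane it is a planar triangulation with one hole. $J_a$ is the part of that hole not on $\partial$, and it is a segment.

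For the length of $J_a$, I would peel along $I$ edge by edge, say left to right. Each peeling step changes the length of the exposed boundary by an i.i.d. increment whose law is given by \eqref{peeling_transitions}, up to boundary effects at the two ends of $I$. The expected increment per boundary edge should be exactly $m_\theta^{-1}$; this is the definition of the growth constant, and I would check it from the formulas for $w_\lambda$. The law of large numbers then gives $|J_a|/(2a)\to m_\theta^{-1}$ in probability as $a\to\infty$. End effects and swallowed finite holes contribute $O(1)$ each, with exponential tails coming from the decay of $(8+1/h)^{-i}w_\lambda(i+1)$ for $h<1/4$, hence $o(a)$ in total.

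For Item~\ref{it2}, the spatial Markov property says that $\mathbb{H}\setminus\mathcal{H}_a$ is again distributed as $\mathbb{H}_{\lambda(\theta)}$, now rooted on $J_a$. The two required facts are then:
\begin{itemize}
\item[$\bullet$] balls of radius $a^{1/2}$ grow exponentially, at rate $m_\theta^{-1}>1$ per layer. This gives $|B^+_{a^{1/2}}|\ge e^{ca^{1/2}}\ge a^4$ with probability $1-e^{-c'a^{1/2}}$ at each edge.
\item[$\bullet$] a point at distance at least $a^{3/4}$ along the boundary is at graph distance at least $a^{2/3}$, with probability $1-e^{-ca^{\gamma}}$. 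I would obtain this from a layer-peeling argument: each layer moves the boundary by at most an $O(1)$-tailed amount per step, so distance $r$ in the graph covers boundary distance at most $r^{1+o(1)}$.
\end{itemize}
A union bound over the $O(a)$ edges of $\overset{\circ}{J_a}$ then finishes Item~\ref{it2}.

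Item~\ref{it3} is the main obstacle, because it is not local: it says that no boundary vertex far along $\partial t$ is close in $t$. This includes vertices on other boundaries $\partial_j$, and vertices on $\partial_i$ reached through the bulk. Local convergence controls only $\partial t\cap B_{3a}(t,e)$. Indeed, in the limit, every vertex of $\partial\mathbb{H}$ within graph distance $3a$ of the root lies within boundary distance $a^2$ with high probability, by the same boundary-versus-graph distance estimate as above. So the event that $B_{3a}(t,e)$ contains a boundary vertex at boundary distance at least $a^2$ but not belonging to the visible segment is exactly what must be excluded.

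In the finite map I would handle this through Theorem~\ref{local_limit_boundary}. $B_{3a}(\Tngp,\en)$ converges to $B_{3a}(\mathbb{H})$, and in the limit the ball meets $\partial$ only along one connected segment of length at most $a^2$. Local convergence then forces, with high probability, that $B_{3a}(\Tngp,\en)\cap\partial\Tngp$ is this same segment. Any other boundary vertex inside the ball would make the ball differ from the limiting ball pattern, because boundary faces and edges are marked in the local topology. So Item~\ref{it3} is again a local event in disguise, and choosing $A$ large and then $n$ large gives $\mathbb{P}(\mathcal{C}_n(a,\delta))\ge1-\eta$.
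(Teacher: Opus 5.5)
Your proposal is correct and follows essentially the same route as the paper. You transfer the event to $\mathbb{H}_{\lambda(\theta)}$ via Theorem~\ref{local_limit_boundary}, obtain $|J_a|/(2a)\to m_\theta^{-1}$ from the peeling exploration along $I_a$, and then use the spatial Markov property for $\mathbb{H}_{\lambda(\theta)}\setminus\mathcal{H}_a$ together with quantitative exponential ball growth, a linear comparison of boundary and graph distances, and a union bound over the $O(a)$ edges of $\overset{\circ}{J_a}$. Your longer discussion of Item~\ref{it3} only spells out the locality the paper asserts in one line: Item~\ref{it3} concerns only boundary vertices within graph distance $3a$, so it is read off from a ball of radius of order $a^2$.
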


\begin{proof}
Fix $\delta,\eta > 0$. For any $a \ge 0$, we define $\mathcal{B}(a,\delta)$ (resp.\ $\mathcal{C}(a,\delta)$) as the event on which $(\mathbb{H}_{\lambda(\theta)},e)$ satisfies $\mathbf{GOOD}_{-}(a,\delta)$ (resp.\ $\mathbf{GOOD}(a,\delta)$), where $e$ denotes the root edge of $\mathbb{H}_{\lambda(\theta)}$. By Theorem~\ref{local_limit_boundary}, since $\mathcal{C}_n(a,\delta)$ only depends on the ball of radius $3a$ centered on $\en$, we have for any $a\ge 0$,
\begin{align*}
\mathbb{P}(\mathcal{C}_{n}(a,\delta)) \xrightarrow[n \to +\infty]{} \mathbb{P}(\mathcal{C}(a,\delta)).
\end{align*}
Therefore, it is sufficient to prove that for $a$ large enough,
\begin{align*}
\mathbb{P}(\mathcal{C}(a,\delta)) \ge 1 -\eta.
\end{align*}
For $i \ge 0$, let $x_{i}$ (resp.\ $x_{-i}$) denote the $i$-th vertex to the right (resp.\ to the left) of the starting point of $e$ on $\partial \mathbb{H}_{\lambda(\theta)}$. We use the shorthand notation $I_a := I_a(\mathbb{H}_{\lambda(\theta)},e)$ and $J_a := J_a(\mathbb{H}_{\lambda(\theta)},e)$. We claim that it suffices to prove that there exist constants $c_{\theta},C_{\theta} > 0$ such that for $a$ large enough,
\begin{align}\label{equa_to_prove}
&\mathbb{P}(\mathcal{B}(a,\delta)) \ge 1-\frac{\eta}{2} \nonumber\\
&\mathbb{P}\big(|B_{a^{\frac{1}{2}}}^{+}(\mathbb{H}_{\lambda(\theta)},e)| \ge a^4\big) \ge 1 -\exp(-c_{\theta}a^{\frac{1}{2}}),\\
&\nonumber \mathbb{P}\big(d_{\mathbb{H}_{\lambda(\theta)}}(x_0,x_i) \le C_{\theta}i\big) \ge 1-C_{\theta}\exp(-c_{\theta}i) \hspace*{0.3cm} \forall i \ge 0.
\end{align}

 Indeed, under $\mathcal{B}(a,\delta)$, let
\[
H' = \mathbb{H}_{\lambda(\theta)}\setminus B_1^{\bullet}(\mathbb{H}_{\lambda(\theta)},I_a).
\]
Since $\mathcal{B}(a,\delta)$ is measurable with respect to $B_1^{\bullet}(\mathbb{H}_{\lambda(\theta)},I_a)$ and by the spatial Markov property of $\mathbb{H}_{\lambda(\theta)}$, conditionally on $\mathcal{B}(a,\delta)$, we have $H' \overset{(d)}{=} \mathbb{H}_{\lambda(\theta)}$. For $e' \in \overset{\circ}{J_a}$ and $i \ge 0$, let $y_i$ (resp.\ $y_{-i}$) denote the $i$-th vertex to the right (resp.\ to the left) of the starting point of $e'$ along $\partial H'$. Define the event
\begin{align*}
\mathcal{D}(e') := \{|B_{a^{\frac{1}{2}}}^{+}(H',e')| \ge a^4\}
\cap \{\forall i \in \mathbb{Z}\setminus \{-a^{3/4},\ldots,a^{3/4}\},\ d_{H'}(y_0,y_{i}) > a^{\frac{2}{3}}\}.
\end{align*}
Then, under
\[
\mathcal{B}(a,\delta)\cap \{\forall i \in \mathbb{Z}\setminus \{-a^2,\ldots,a^2\},\ d_{\mathbb{H}_{\lambda(\theta)}}(x_0,x_i) \ge 3a\}
\cap \bigcap_{e' \in \overset{\circ}{J_a}}\mathcal{D}(e'),
\]
the event $\mathcal{C}(a,\delta)$ occurs. Using~\eqref{equa_to_prove}, and the bound $|\Jatec| \le m_{\theta}^{-1}2a(1+\delta)$, a direct computation shows that for $a$ large enough,
\begin{align*}
\mathbb{P}(\mathcal{C}(a,\delta)) \ge 1-\eta.
\end{align*}

 We now prove~\eqref{equa_to_prove}. The value of $C_{\theta},c_{\theta}$ may change from line to line. We start with the first inequality and use a peeling exploration. We define a peeling algorithm $\mathcal{A}_a$. Let $t$ be a triangulation of the half-plane with one infinite hole and root edge $e$. If $I_a\cap \partial^{*}t$ is non-empty, let $e'$ be the leftmost edge in this set, and define $\mathcal{A}_a(t)$ as the first edge to the left of $e'$ along $\partial^{*}t$. Otherwise, define $\mathcal{A}_a(t)$ arbitrarily. Let $(\overline{\mathcal{E}}^{\mathcal{A}_a}_k(\mathbb{H}_{\lambda(\theta)}))_{k \ge 0}$ be the associated peeling exploration, and set $\overline{t}_k = \overline{\mathcal{E}}^{\mathcal{A}_a}_k(\mathbb{H}_{\lambda(\theta)})$.

For $k,i \ge 0$, we recall the events $\mathbf{R}^i_k$, $\mathbf{L}^i_k$ and $\mathbf{C}_k$ defined in Section~\ref{peeling_exploration}. We define
\begin{align}\label{def_tau}
\tau_a = \inf \Big\{k \ge 0 : I_a \cap \partial^{*}\overline{t}_k = \emptyset\Big\}.
\end{align}
It is straightforward to check that $\tau_a$ is almost surely finite and that $\overline{t}_{\tau_a} = B_1^{\bullet}(\mathbb{H}_{\lambda(\theta)},I_a)$. For $k \ge 0$, let $M_k$ denote the number of edges of $I_a$ still exposed on the hole and we denote by $R_k$ (resp. $L_k$) the number of edges swallowed to the right (resp. the left) by the $k$-th discovered triangle. The sequences $(R_k)_{k \ge 0}$ and $(L_k)_{k \ge 0}$ are i.i.d. by the spatial Markov property of $\mathbb{H}_{\lambda(\theta)}$. Then,
\[
M_k = \max\big(0,2a+1-R_1-\cdots-R_k\big).
\]
In particular,
\begin{align*}
\tau_a = \inf \{k \ge 0 : R_1+\cdots+R_k \ge 2a+1 \}.
\end{align*}

Using \eqref{peeling_transitions} and \eqref{formula_wlambda}, one checks that $R_k$ has an exponentially decaying tail and
\[
\mathbb{E}[R_k] = \frac{1}{2}\left(\frac{1}{\sqrt{1+8h}}-\frac{\sqrt{1-4h}}{\sqrt{1+8h}}\right).
\]
By standard large deviation estimates, there exists $c_{\theta,\delta} > 0$ such that for $a$ large enough,
\begin{align}\label{esti_taur}
\mathbb{P}\Bigg(\tau_a \in \Big[\Big(1-\frac{\delta}{2}\Big)\frac{2a}{\mathbb{E}[R_1]},\ \Big(1+\frac{\delta}{2}\Big)\frac{2a}{\mathbb{E}[R_1]}\Big]\Bigg)
\ge 1-\exp(-c_{\theta,\delta}a).
\end{align}

For all $k \ge 1$, let $X_k = |\partial^{*}\overline{t}_k \setminus \partial \overline{t}_k|$ and $Y_k = |\partial\overline{t}_k \setminus \partial^{*}\overline{t}_k|$ (see Figure~\ref{peeling_XY}). Note that
\[
X_{\tau_a} = |\partial^{*} B_1^{\bullet}(\mathbb{H}_{\lambda(\theta)},I_a) \setminus \partial \mathbb{H}_{\lambda(\theta)}|
= |J_a|.
\]

\begin{figure}[H]
\centering
\includegraphics[scale=0.3]{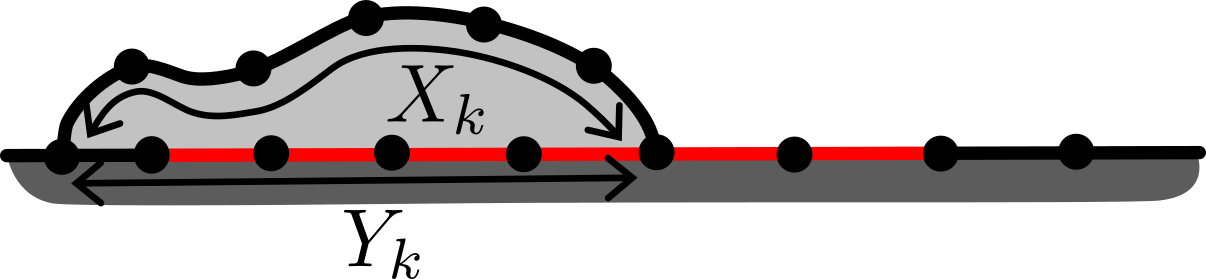}
\caption{The triangulation with holes $\overline{t}_k$ for $k \in \{0,\ldots,\tau_a-1\}$. The variable $X_k$ counts edges exposed on the hole that do not belong to the boundary, while $Y_k$ counts boundary edges that have been swallowed.}
\label{peeling_XY}
\end{figure}

Since $J_a$ is almost surely a segment, it suffices to show
\[
\mathbb{P}\Big(X_{\tau_a} \in [(1-\delta)m_{\theta}^{-1}2a,(1+\delta)m_{\theta}^{-1}2a]\Big)
\ge 1-\frac{\eta}{2}.
\]

Let $S_k = X_k - Y_k$ denote the evolution of the perimeter process. Its increments $\Delta S_k := S_{k+1}-S_k$ are i.i.d.\ with exponentially decaying tails, and by \eqref{peeling_transitions} and \eqref{formula_W},
\[
\mathbb{E}[\Delta S_k] = \frac{\sqrt{1-4h}}{\sqrt{1+8h}}.
\]
Combining this with~\eqref{esti_taur} and standard large deviations again yields
\begin{align}\label{estimate_S}
\mathbb{P}\Bigg(S_{\tau_a} \in \Big[(1-\delta)\frac{\mathbb{E}[\Delta S_1]}{\mathbb{E}[R_1]}2a,\ (1+\delta)\frac{\mathbb{E}[\Delta S_1]}{\mathbb{E}[R_1]}2a\Big]\Bigg)
\ge 1-\exp(-c_{\theta,\delta}a).
\end{align}

At any step $k \in \{0,\ldots,\tau_a-1\}$, the set $\partial^{*}\overline{t}_k \setminus \partial \overline{t}_k$ consists of a single segment on the hole, and the algorithm always peels the rightmost edge of this segment. Consequently,
\begin{align*}
&Y_{k+1}-Y_k = \max(R_k,L_k-X_k),\\
&X_{k+1}-X_k =
\begin{cases}
0 & \text{if } \mathbf{R}_k^i \text{ occurs},\\
\max(-X_k+1,-L_k) & \text{if } \mathbf{L}_k^i \text{ occurs},\\
1 & \text{if } \mathbf{C}_k \text{ occurs}.
\end{cases}
\end{align*}
It follows that
\[
X_k \ge \sum_{j=0}^{k-1}\big(\mathbf{1}_{\mathbf{C}_j}-L_j\mathbf{1}_{\cup_{i \ge 0}\mathbf{L}_j^{i}}\big).
\]
The summands are i.i.d., have a positive drift since they dominate $\Delta S_k$, and exponentially decaying tails. Hence, with high probability,
\[
\mathbb{P}(L_k-X_k \ge 0) \le \exp(-c_{\theta}k).
\]
Using that $L_k-X_k \le 0$ implies $Y_{k+1}-Y_k = R_k$ and \eqref{esti_taur}, one obtains
\begin{align}\label{estimate_Y}
\mathbb{P}\big(Y_{\tau_a} \in [(1-\delta)2a,(1+\delta)2a]\big) \ge 1-\exp(-c_{\theta,\delta}a).
\end{align}
Since $|J_a| = X_{\tau_a} = S_{\tau_a}+Y_{\tau_a}$, we conclude that
\begin{align*}
\mathbb{P}\bigg(|J_a(e)| \in \bigg[(1-\delta)\bigg(1+\frac{\mathbb{E}[\Delta S_1]}{\mathbb{E}[R_1]}
\bigg)2a ,(1+\delta)\bigg(1+\frac{\mathbb{E}[\Delta S_1]}{\mathbb{E}[R_1]}
\bigg)2a \bigg]\bigg)
\ge 1-\exp(-c_{\theta,\delta}a).
\end{align*}
Noting that
\[
1+\frac{\mathbb{E}[\Delta S_1]}{\mathbb{E}[R_1]}
= \frac{1-2h_{\theta}+\sqrt{1-4h_{\theta}}}{2h_{\theta}}
= m_{\theta}^{-1},
\]
this completes the proof of the first inequality in \eqref{equa_to_prove}.

We now turn to the two last inequalities in \eqref{equa_to_prove}. These estimates are proved in~\cite[Lemma~4.6 and Proposition~4.10]{Ray_geometry_halfplane} for type-II half-plane triangulations. Although we consider type-I triangulations, the arguments are the same, so we only sketch the proof.

The main tool is a peeling exploration with an algorithm $\mathcal{A}_{\mathrm{metric}}$ that reveals metric hulls. Let $\rho$ denote the starting vertex of $e$. For a triangulation of the half-plane $t$ with one infinite hole, let $x$ be the leftmost boundary vertex minimizing the distance to $\rho$, and define $\mathcal{A}_{\mathrm{metric}}(t)$ as the edge on the hole immediately to the left of $x$. Using the same notation as above, define for $r \ge 0$,
\begin{align*}
\tau_r = \inf \Big\{k \ge 0 : \min_{x \in \partial^{*}\overline{t}_k} d_{\overline{t}_k}(x,\rho) \ge r\Big\}.
\end{align*}
Then $\overline{t}_{\tau_r} = B_r^{\bullet}(\mathbb{H}_{\lambda(\theta)},e)$ and since each peeling step adds at least one triangle, we have $|B_r^{+}(\mathbb{H}_{\lambda(\theta)},e)| \ge \tau_r$. Between $\tau_r$ and $\tau_{r+1}$, the exploration peels all boundary vertices at distance $r$ until they are swallowed. The number of boundary vertices $x_i$ to the right of the root that belong to $B_r^{\bullet}(\mathbb{H}_{\lambda(\theta)},e)$ is bounded by $\sum_{j=1}^{r} R_{\tau_j-1}$. Since $(R_k)_{k \ge 0}$ are i.i.d.\ with exponentially decaying tails, there exist $c_{\theta},C_{\theta}>0$ such that
\begin{align*}
\mathbb{P}(d_{\mathbb{H}_{\lambda(\theta)}}(x_0,x_i) \le C_{\theta}i ) \ge 1-C_{\theta} \exp(-c_{\theta}i).
\end{align*}
For any $\ell \ge 1$, using \eqref{estimate_S}, we have
\[
\mathbb{P}(\tau_{r+1}-\tau_r \le c_{\theta}\ell \mid X_{\tau_r} = \ell) \le \exp(-c_{\theta}\ell).
\] 
Using the positive drift of $(S_n)_{n \ge 0}$, this implies
\[
\mathbb{P}( X_{\tau_{r+1}} \le (1+c_{\theta})\ell \mid X_{\tau_r} = \ell) \le \exp(-c_{\theta}\ell).
\]
Putting together the two last estimates, there exist $c_{\theta},C_{\theta}>0$ such that 
\[
\mathbb{P}( \tau_r \le \exp(c_{\theta}r)) \le C_{\theta}\exp(-c_{\theta}r).
\]
Using this with the fact that $|B_r^{+}(\mathbb{H}_{\lambda(\theta)},e)| \ge \tau_r$ and replacing $r$ with $a^{\frac{1}{2}}$ completes the proof of~\eqref{equa_to_prove} and of the proposition.
\end{proof}

\subsubsection{Hulls can be discovered locally}\label{section_excluding1}
This section is dedicated to proving the following proposition.
\begin{proposition}\label{J_lies_on_hole_hull}
For any $\delta>0$, there exists $a_0 \ge 0$ such that for all $a \ge a_0$ and for $n$ large enough,
\begin{align*}
\mathbb{P}\bigg(
\frac{|\partial^{*}\Bnb|}{|\Bnb|}\ge \ct \hspace*{0.1cm} \text{ and }\hspace*{0.1cm} \overset{\circ}{J_a}(\Tngp,\en)\nsubseteq \partial^{*}\Bnb
\bigg)
\le \delta .
\end{align*}
\end{proposition}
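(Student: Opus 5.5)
We will work on the event $\mathcal{C}_n(a,\delta')$ from Proposition~\ref{proba_B_goes_to_1}, which has probability at least $1-\delta/2$ once $a$ is large and $n$ is large. On this event, together with $\frac{|\partial^{*}\Bnb|}{|\Bnb|}\ge \ct$, we will show deterministically that $\Jatec \subset \partial^{*}\Bnb$ for $a$ large. Write $t=\Tngp$, $e=\en$, $H=\Hate$ and $t'=t\setminus H$. Since $\Hate$ is a planar triangulation with one hole whose boundary part is $I_a$ and whose hole part is the segment $\Jate$, every edge of $\Jate$ lies at distance $1$ from $I_a$, so it belongs to $\Bn$. The only way an edge $e'\in\Jatec$ can fail to lie on $\partial^{*}\Bnb$ is the following: the triangle of $t'$ adjacent to $e'$ is swallowed into $\Bnb$, either because it belongs to $\Bn$ or because it lies in a small component of $t\setminus\Bn$ that is glued into the hull.

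\textbf{Step 1: no triangle beyond $e'$ touches $\partial t$.} Fix $e'\in\Jatec$. The triangle across $e'$ in $t'$ has a vertex adjacent to $e'$. If that vertex were on $\partial t$, then some boundary vertex $y$ would be within distance $1$ of $e'$ in $t$. The geodesic from $e'$ to $y$ either stays in $t'$ or passes through $H$. In the first case, $y\in\partial t\cap\partial t'$, and Item~\eqref{it2} (distance $>a^{2/3}$) excludes this. In the second case, either $y\in I_a$, which would force $e'$ to be within $a^{3/4}$ of an endpoint of $\Jate$ and so is excluded by the definition of $\Jatec$ (this needs a small planarity argument inside $H$), or $y\notin I_a$ lies near $H$, which has diameter at most $2a+O(1)$. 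In that last case $d_t(e,y)\le 2a+2<3a$, so Item~\eqref{it3} gives $d_{\partial t}(e,y)<a^2$, and $y$ lies on $\partial_i$ close to $I_a$. This sub-case requires care: near the ends of $I_a$, the boundary part of $\partial t'$ meets $\Jate$ only at its endpoints, so again $d_{t'}(e',y)>a^{2/3}$ by Item~\eqref{it2}. We conclude that the triangle across $e'$ is not in $\Bn$.

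\textbf{Step 2: the component beyond $e'$ is the large one.} Let $T^{\star}$ be the component of $t\setminus\Bn$ containing the triangle across $e'$. By Item~\eqref{it2}, the ball $B^{+}_{a^{1/2}}(t',e')$ has at least $a^4$ triangles, and it does not meet $\partial t$ within distance $a^{2/3}$. The part of this ball that lies in $\Bn$ is controlled by the number of vertices at distance $1$ from $\partial t$ within distance $a^{1/2}$ of $e'$. Its complement therefore contains at least $a^4-O(a^{3})$ triangles of $T^{\star}$, hence $|T^{\star}|\ge a^4/2$. Now suppose $T^{\star}$ were glued into the hull. The hypothesis $\frac{|\partial^{*}\Bnb|}{|\Bnb|}\ge\ct$ alone does not exclude this, since it is a global ratio. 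We will therefore use the fact that the glued small components of $t\setminus\Bn$ have total mass at most $n$ while $t$ has about $2n$ triangles; on its own this is not enough. A cleaner route is to note that $T^{\star}$, being separated from the rest of $t$ by $\partial^{*}\Bn$, is controlled by the global ratio condition: if $T^{\star}$ is glued, then $|\Bnb|\ge |T^\star|$ while $|\partial^{*}\Bnb|\le 2\sum_x D^n_x$. Combined with Proposition~\ref{second_moment_bounded_degree} and a union bound over the $O(a)$ edges of $\Jatec$, we will show that gluing such a component is incompatible with the ratio condition except on an event of probability at most $\delta/2$.

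\textbf{Main obstacle.} Step 2 is the delicate point. The difficulty is ruling out that the component beyond $\Jatec$, although locally large (at least $a^4$ triangles), is nevertheless a small component with at most $n$ triangles that gets absorbed into the hull. The plan is to combine the local largeness from Item~\eqref{it2} with the ratio condition and, if needed, the isoperimetric control of Theorem~\ref{isoperimetric_inequalities}, to show that an absorbed component whose perimeter is a subset of $\partial^{*}\Bn$ cannot be so large without violating the ratio, except with small probability.
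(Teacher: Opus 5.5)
\paragraph{Step~1.} This step is essentially the paper's Proposition~\ref{contains_ball} and is fine in substance, though it can be done more simply. A vertex of a face of $t'$ that lies on $\partial t$ is automatically in $\partial t\cap\partial t'$. So Item~\eqref{it2}, applied inside $t'$, shows that no face of $B^{+}_{a^{1/2}}(t',e')$ meets $B_1^{+}(t,\partial t)$. Hence this whole ball, with at least $a^4$ triangles and no $O(a^3)$ correction, lies in a single component of $t\setminus\Bn$. That component is glued into $\Bnb$ whenever $e'\notin\partial^{*}\Bnb$.

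\paragraph{The gap is Step~2.} Your opening claim is false: on $\mathcal{C}_n(a,\delta')\cap\{|\partial^{*}\Bnb|\ge\ct|\Bnb|\}$ the inclusion $\overset{\circ}{J_a}(\Tngp,\en)\subset\partial^{*}\Bnb$ does not hold deterministically. Your proposed repair also cannot work. The absorbed region has a bounded number of triangles (at least $a^4$), while $|\Bnb|$ and $|\partial^{*}\Bnb|$ are of order $|\mathbf{p}^n|\to\infty$. Gluing one such region, or the $O(a)$ regions behind the edges of $\overset{\circ}{J_a}$, therefore changes the global ratio by $o(1)$ and is perfectly compatible with $|\partial^{*}\Bnb|\ge\ct|\Bnb|$. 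Proposition~\ref{second_moment_bounded_degree} only bounds $|\partial^{*}\Bnb|$ from above; it says nothing about whether a particular component is glued. The global ratio condition only gains force when many bad roots are considered at once.

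\paragraph{The missing idea.} Use that $\en$ is uniform on $\partial\Tngp$. A lower bound on the probability of the bad event then becomes a lower bound on the \emph{number} of bad boundary edges in one fixed map, which yields linearly many disjoint absorbed regions. The paper argues by contradiction as follows.
\begin{itemize}
\item By Proposition~\ref{second_moment_bounded_degree} and Markov's inequality, $|\partial^{*}\Bnb|\le a|\mathbf{p}^n|$ except with probability $O(a^{-2})$. So on the ratio event, $|\Bnb|\le\ct^{-1}a|\mathbf{p}^n|$.
\item If the statement failed, there would be a fixed $t$ with $|B_1^{\bullet}(t,\partial t)|\le\ct^{-1}a|\mathbf{p}^n|$ and at least $\frac{\delta}{4}|\mathbf{p}^n|$ boundary edges $e$ such that $(t,e)$ satisfies $\mathbf{GOOD}(a,\delta)$ but $\Jatec\nsubseteq\partial^{*}B_1^{\bullet}(t,\partial t)$.
\item Extract $\frac{\delta}{16a^2}|\mathbf{p}^n|$ of these edges, pairwise at distance at least $a^2$ along $\partial t$. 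Item~\eqref{it3} upgrades this to $t$-distance at least $3a$. Hence the absorbed balls $B^{+}_{a^{1/2}}(t\setminus\mathcal{H}_a(t,e_i),u_i)$ are vertex-disjoint; each has at least $a^4$ triangles and lies in the hull.
\item Therefore $|B_1^{\bullet}(t,\partial t)|\ge\frac{\delta a^2}{16}|\mathbf{p}^n|$. This exceeds $\ct^{-1}a|\mathbf{p}^n|$ once $a>16\delta^{-1}\ct^{-1}$, a contradiction.
\end{itemize}
This is exactly where three exponents are balanced: $a^4$ in Item~\eqref{it2}, $a^2$ in Item~\eqref{it3}, and $a$ from Markov's inequality. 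Note also that Item~\eqref{it3} serves to make the regions for different roots disjoint, not the local analysis around a single root that you attempt in Step~1.
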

For $t \in \mathcal{T}_{\mathbf{p}^n}(n,g_n)$ and $e$ an oriented edge on $\partial t$, we recall the notation
\[
\Hate := B_1^{\bullet}(t,I_a(t,e))
\quad\text{and}\quad
\Jate = \partial^{*}\mathcal{H}_{a}(t,e) \setminus \partial t.
\]
On the event $\mathcal{C}_n(a,\delta)$, the subset $\overset{\circ}{J_a}(t,e)$ is obtained from $J_a(t,e)$ by removing the $a^{3/4}$ leftmost and rightmost edges. See Figure~\ref{segment_def} for an illustration. The proof proceeds as follows:
\begin{enumerate}
	\item  We show that, if $\overset{\circ}{J_a}(t,e) \nsubseteq \partial^{*} B_1^{\bullet}(t,\partial t)$, then $B_1^{\bullet}(t,\partial t)$ must contain a region of diameter at most $2a^{\frac{1}{2}}$ with size at least $a^4$. This is established in Proposition~\ref{contains_ball}. Note that this proposition is entirely deterministic.
	\item Then, if $\overset{\circ}{J_a}(t,e) \nsubseteq \partial^{*} B_1^{\bullet}(t,\partial t)$ for many $e \in \partial t$, we can construct many disjoint large subsets contained in $ B_1^{\bullet}(t,\partial t)$ and we obtain a contradiction with the bound on $\Bnb$ given by Proposition~\ref{second_moment_bounded_degree}. 
\end{enumerate}

\begin{proposition}\label{contains_ball}
Fix $t \in \mathcal{T}_{\mathbf{p}^n}(n,g_n)$ and an oriented edge $e$ on $\partial t$. Let $a \ge 1$ and $\delta > 0$, and assume that $(t,e)$ satisfies $\mathbf{GOOD}(a,\delta)$. 
Assume $\overset{\circ}{J_a}(t,e) \nsubseteq \partial^{*}B_1^{\bullet}(t,\partial t)$. Let us fix $u \in \overset{\circ}{J_a}(t,e) \setminus \partial^{*} B_1^{\bullet}(t,\partial t)$ arbitrarily and write $t' = t \setminus \mathcal{H}_a(t,e)$. Then
\begin{align*}
\mathcal{H}_a(t,e)\cup B_{a^{1/2}}^+(t',u) \subset B_1^{\bullet}(t,\partial t).
\end{align*}
\end{proposition}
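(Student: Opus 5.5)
The plan is to prove the inclusion in two parts: first $\mathcal{H}_a(t,e)\subset B_1^{\bullet}(t,\partial t)$, which is a purely set-theoretic consequence of the definition of hulls, and then $B_{a^{1/2}}^+(t',u)\subset B_1^{\bullet}(t,\partial t)$, which is where Item~\ref{it2} and Item~\ref{it3} of $\mathbf{GOOD}(a,\delta)$ are used.

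\emph{Step 1: $\mathcal{H}_a(t,e)\subset B_1^{\bullet}(t,\partial t)$.} Since $I_a(t,e)\subset\partial t$, we have $B_1^{+}(t,I_a(t,e))\subset B_1^{+}(t,\partial t)$. Hence every connected component of $t\setminus B_1^{+}(t,\partial t)$ is contained in a single connected component of $t\setminus B_1^{+}(t,I_a(t,e))$. Let $C$ be a component of $t\setminus B_1^+(t,I_a(t,e))$ that was glued in $\mathcal{H}_a(t,e)$, so that $|C|\le n$. Every component of $t\setminus B_1^+(t,\partial t)$ meeting $C$ lies inside $C$, so it has at most $n$ triangles. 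It therefore cannot be the unique component with more than $n$ triangles, and it is glued into $B_1^{\bullet}(t,\partial t)$. Together with $B_1^+(t,I_a(t,e))\subset B_1^+(t,\partial t)$, this gives Step~1.

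\emph{Step 2: the ball around $u$.} Let $T^{\mathrm{big}}=t\setminus B_1^{\bullet}(t,\partial t)$; if it is empty there is nothing to prove. The edge $u$ lies on $\partial^*\mathcal{H}_a(t,e)$, so by Step~1 the face on the $\mathcal{H}_a$-side of $u$ is in the hull. Since $u\notin\partial^*B_1^{\bullet}(t,\partial t)$, the face of $t'$ incident to $u$ is also in the hull. Argue by contradiction: suppose some face of $B_{a^{1/2}}^+(t',u)$ belongs to $T^{\mathrm{big}}$. Then there is a chain of adjacent faces in $t'$ of length $O(a^{1/2})$ from the face at $u$ to a face of $T^{\mathrm{big}}$. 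This chain must cross an edge $x\in\partial^*B_1^{\bullet}(t,\partial t)\subset\partial^*B_1^+(t,\partial t)$. By definition of the strong ball, the hull-side face at $x$ contains a vertex $w\in\partial t$, while the opposite face has no vertex on $\partial t$. In particular the endpoints of $x$ are within graph distance $1$ of $\partial t$, and $x$ is at $t'$-distance at most $a^{1/2}+O(1)$ from $u$.

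\emph{Step 3: reaching the contradiction.} There are two cases.
\begin{itemize}
\item[$\bullet$] If the hull-side face at $x$ is a face of $t'$, then $w\in\partial t\cap\partial t'$ and $d_{t'}(u,w)\le a^{1/2}+2<a^{2/3}$ for $a$ large. This contradicts Item~\ref{it2}.
\item[$\bullet$] Otherwise $x\in J_a(t,e)$ and the relevant vertex $w\in\partial t$ is reached through $\mathcal{H}_a(t,e)$. In this case I would use Item~\ref{it3}. Every vertex of $\mathcal{H}_a(t,e)$ is within distance about $a+1$ of $e$ in $t$, so $d_t(e,w)\le a+a^{1/2}+O(1)<3a$. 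Item~\ref{it3} then forces $d_{\partial t}(e,w)<a^2$. Together with the fact that the $t'$-side face at $x$ lies outside $B_1^+(t,\partial t)$, this should show that $w$ is in fact a vertex of $\partial t\cap\partial t'$ within $t'$-distance $a^{1/2}+O(1)$ of $u$, again contradicting Item~\ref{it2}.
\end{itemize}
In the second case it may be necessary to use that $u\in\overset{\circ}{J_a}(t,e)$, which is at distance at least $a^{3/4}$ along $J_a(t,e)$ from its endpoints. This should rule out that the crossing edge $x$ sits near the two ends of $J_a(t,e)$, where $\mathcal{H}_a(t,e)$ meets $\partial t\setminus I_a(t,e)$.

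I expect Step~3, and in particular the second case, to be the main obstacle. It requires a careful local topological description of how $\partial^*B_1^{\bullet}(t,\partial t)$ can approach $u$ inside $t'$, and of how the constants $a^{1/2}<a^{2/3}<a^{3/4}$ and $2a+2a^{1/2}<3a$ enter. Steps~1 and~2 are routine consequences of the definitions of strong balls and hulls.
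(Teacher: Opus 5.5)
Your argument is correct, and the second case of Step~3, which you flag as the main obstacle, cannot occur. Your chain consists of faces of $t'$, so the crossing edge $x$ separates two internal faces of $t'$. Hence the hull-side face at $x$ is a face of $t'$, and $x\notin J_a(t,e)$. The first bullet therefore always applies.

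Item~3 plays no role in this proposition; it is only needed afterwards, to make the balls attached to distant $e_i$ disjoint. Likewise, $u\in\overset{\circ}{J_a}(t,e)$ enters only as the hypothesis that lets you invoke Item~2 at $u$.

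Two small repairs make the first bullet work for every $a\ge 1$, not just for $a$ large:
\begin{itemize}
\item The chain need not have $O(a^{1/2})$ faces when degrees are large. Instead, build it by rotating around the vertices of a $t'$-geodesic of length at most $a^{1/2}-1$ from $u$. Then every face of the chain lies in $B^{+}_{a^{1/2}}(t',u)$, which gives $d_{t'}(u,w)\le a^{1/2}\le a^{2/3}$. This already contradicts the strict inequality in Item~2, with no need for $a^{1/2}+2<a^{2/3}$.
\item You should justify $w\in\partial t\cap\partial t'$. It holds because $w\in\partial t$ is a vertex of a face of $t'$.
\end{itemize}

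The paper uses the same Item~2 observation but skips the crossing construction. It shows directly that no internal face of $B^{+}_{a^{1/2}}(t',u)$ lies in $B_1^{+}(t,\partial t)$, since such a face would have a vertex of $\partial t\cap\partial t'$ within $t'$-distance $a^{1/2}$ of $u$. So the ball lies in a single component $t_i$ of $t\setminus B_1^{+}(t,\partial t)$. The edge $u$ separates $B_1^{+}(t,\partial t)$ from $t_i$, because its $\mathcal{H}_a$-side face lies in $B_1^{+}(t,I_a(t,e))\subset B_1^{+}(t,\partial t)$. Since $u\notin\partial^{*}B_1^{\bullet}(t,\partial t)$, the component $t_i$ is not the large one and is glued into the hull. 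This is shorter than your route.

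Your Step~1, proving $\mathcal{H}_a(t,e)\subset B_1^{\bullet}(t,\partial t)$, supplies a part of the statement that the paper's proof leaves implicit.
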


\begin{proof}

We recall that $t \setminus B_1^+(t,\partial t)$ is a family of connected components $t_1,\dots,t_{k}$ and $B_1^{\bullet}(t,\partial t)$ is obtained by gluing to $B_1^+(t,\partial t)$ the components $t_i$ with at most $n$ internal triangles. We claim that it suffices to prove that there exists $1 \le i \le k$ such that $ B_{a^{1/2}}^+(t',u) \subset t_i$ to conclude the proof. Indeed, in that case, $t_i$ has less than $n$ internal triangles since $u \in t_i$ and $u \notin \partial^{*} B_1^{\bullet}(t,\partial t)$. Let us now prove this claim. We recall that for $t_0$ a triangulation with holes, we denote by $F_{\mathrm{in}}(t_0)$ the set of internal faces of $t_0$. It suffices to prove $F_{\mathrm{in}}(B_{a^{1/2}}^+(t',u)) \cap F_{\mathrm{in}}(B_1^+(t,\partial t)) = \emptyset$. Let us reason by contradiction and assume this intersection is non-empty and let $f \in F_{\mathrm{in}}(B_{a^{1/2}}^+(t',u)) \cap F_{\mathrm{in}}(B_1^+(t,\partial t))$. Then $f$ is incident to a vertex $v \in \partial t$. Moreover, since $f \in F_{\mathrm{in}}(B_{a^{1/2}}^+(t',u))$, we have $v \in \partial t'$ and $d_{t'}(u,v) \le a^{\frac{1}{2}}$. Thus, $v \in \partial t \cap \partial t'$. These last two facts contradict the second condition in the definition of $\mathbf{GOOD}(a,\delta)$ applied to $(t,e)$, since $d_{t'}(u,\partial t \cap \partial t') > a^{2/3} > a^{1/2}$. This concludes the proof.
\end{proof}

Finally, we state and prove Proposition~\ref{J_lies_on_hole_hull}.

\begin{proof}
We reason by contradiction. Fix $\delta>0$ and assume that the conclusion does not hold. Using Proposition~\ref{second_moment_bounded_degree} and Markov's inequality, for any $a > 0$ and for $n$ large enough,
\begin{align*}
\mathbb{P}\bigg(\frac{|\partial^{*}\Bnb|}{|\mathbf{p}^{n}|}\ge a\bigg)
\le C_{\theta} a^{-2}.
\end{align*}
We choose $A > 16\delta^{-1}\ct^{-1}$ such that for $a\ge A $, the right-hand side above is at most $\delta/4$. Using Proposition~\ref{proba_B_goes_to_1}, up to increasing $A$, we may also assume that for $n$ large enough we have $\mathbb{P}(\mathcal{C}_n(a,\delta)) \ge 1 - \frac{\delta}{4}$. Then, for arbitrarily large values of $a \ge 1$ and $n$, we have 
\begin{align}\label{proba_lower_bound}
\mathbb{P}\bigg(
\mathcal{C}_n(a,\delta),\
|\Bnb|\le \ct^{-1} a|\mathbf{p}^{n}|,\
\overset{\circ}{J_a}(\Tngp,\en)\nsubseteq \partial^{*}\Bnb
\bigg)
\ge \frac{\delta}{4}.
\end{align}

Since $\en$ is uniform on $\partial \Tngp$, it follows that there exists
$t\in \mathcal{T}_{\mathbf{p}^{n}}(n,g_n)$ such that
$|B_1^{\bullet}(t,\partial t)|\le \ct^{-1} a|\mathbf{p}^{n}|$ and there are at least $\frac{\delta}{4}|\mathbf{p}^{n}|$ distinct oriented edges $e_1,\dots,e_{\frac{\delta}{4}|\mathbf{p}^{n}|}$ on $\partial t$ for which $(t,e_i)$ satisfies $\mathbf{GOOD}(a,\delta)$ but
\[
\overset{\circ}{J_a}(t,e_i)\nsubseteq \partial^{*}B_1^{\bullet}(t,\partial t).
\]
We fix such a triangulation $t$. From now on, the argument is entirely deterministic, and we show that this leads to a contradiction. Up to relabelling the $e_i$, we may assume that for $1 \le i < j \le \frac{\delta}{16 a^2}|\mathbf{p}^n|$,
\[
d_{\partial t}(e_i,e_j)\ge a^2.
\]
Fix $i\in\{1,\dots,\frac{\delta}{16a^2}|\mathbf{p}^{n}|\}$. Since
$\overset{\circ}{J_a}(t,e_i)\nsubseteq \partial^{*}B_1^{\bullet}(t,\partial t)$,
there exists an edge
$u_i\in \overset{\circ}{J_a}(t,e_i)$ such that
$u_i\notin \partial^{*}B_1^{\bullet}(t,\partial t)$.
We define $
t_i := t\setminus \mathcal{H}_a(t,e_i)$. By Proposition~\ref{contains_ball}, we have
\begin{align}\label{balls_contained_hull}
 B_{a^{1/2}}^+(t_i,u_i)
\subset B_1^{\bullet}(t,\partial t).
\end{align}
Using the fact that for
$1\le i<j\le \frac{\delta}{16 a^2}|\mathbf{p}^{n}|$
we have $d_{\partial t}(e_i,e_j)\ge a^2$, together with the assumption that $(t,e_i)$ satisfies $\mathbf{GOOD}(a,\delta)$ (more precisely Item $3$ of Definition~\ref{definition_GOOD}), we deduce that
$d_t(e_i,e_j)\ge 3a$.
Moreover,
$d_t(e_i,u_i)\le a+1$ and $d_t(e_j,u_j)\le a+1$.
It follows that 
\begin{align}\label{balls_do_not_intersect2}
V(B_{a^{1/2}}^+(t_i,u_i))\cap V(B_{a^{1/2}}^+(t_j,u_j))=\emptyset,
\end{align}
where $V(\cdot)$ denotes the set of vertices. In particular, they do not share any internal face. Moreover, we recall that since $(t,e_i)$ satisfies $\mathbf{GOOD}(a,\delta)$, we have:
\begin{align}\label{large_balls}
\forall i \in \bigg\{1,\cdots,\frac{\delta}{16 a^2}|\mathbf{p}^n| \bigg\}\text{,}\hspace*{0.2cm}|B_{a^{1/2}}^+(t_i,u_i)| \ge a^4.
\end{align}

Combining \eqref{balls_contained_hull}, \eqref{balls_do_not_intersect2} and \eqref{large_balls}, we obtain
\begin{align*}
|B_1^{\bullet}(t,\partial t)|
&\ge
\sum_{i=1}^{\frac{\delta}{16 a^2}|\mathbf{p}^{n}|}
\big|B_{a^{1/2}}^+(t_i,u_i)\big|
\ge
a^{4}\,\frac{\delta}{16a^2}|\mathbf{p}^{n}|
=
\frac{\delta a^2}{16}|\mathbf{p}^{n}|
>
\ct^{-1} a|\mathbf{p}^{n}|,
\end{align*}
where in the last inequality we have used the fact that $a \ge A \ge 16 \delta^{-1}\ct^{-1}$. This contradicts the assumption
$|B_1^{\bullet}(t,\partial t)|\le \ct^{-1} a|\mathbf{p}^{n}|$
and concludes the proof.
\end{proof}

\subsubsection{Proof of the $L^1$ convergence}\label{section_concluding_growth_rate}
This section is dedicated to proving Proposition~\ref{growth_rate}. We start with a deterministic lemma providing a relation between
$|\partial^{*}B_1^{\bullet}(t,\partial t)|$
and
$|\overset{\circ}{J_a}(t,e)|$.

\begin{lemma}\label{inequalities}
For any $t\in \mathcal{T}_{\mathbf{p}^n}(n,g_n)$ and any $\delta\ge 0$, for $a$ large enough, we have
\begin{align*}
(1-3\delta)m_{\theta}^{-1}
\#\Big\{ e\in \partial t \ :\ &
\overset{\circ}{J_a}(t,e)\subset \partial^{*}B_1^{\bullet}(t,\partial t)
\ \text{and } (t,e)\text{ satisfies }\mathbf{GOOD}(a,\delta)
\Big\} \\
&\le |\partial^{*}B_1^{\bullet}(t,\partial t)| \\
&\le \sum_{e\in \partial t}\frac{|J_a(t,e)|}{|I_a(t,e)|}.
\end{align*}
\end{lemma}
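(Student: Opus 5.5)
Both inequalities come from a double-counting argument, where each edge of $\partial^{*}B_1^{\bullet}(t,\partial t)$ is charged to boundary edges $e$. I will prove the upper bound first, since it is essentially a covering argument, and then the lower bound using disjointness of the inner segments $\Jatec$.

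\emph{Upper bound.} Write $H = B_1^{\bullet}(t,\partial t)$. Every edge $u\in\partial^{*}H$ that is not a boundary edge lies on a triangle having a vertex $x$ on some boundary $\partial_i$. Hence it is adjacent to some edge $e\in\partial_i$, and for every $e'$ with $e\in I_a(t,e')$ the edge $u$ lies in $B_1^{+}(t,I_a(t,e'))$.

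The main claim I would show is that $\partial^{*}H\setminus\partial t\subset\Jate$ for such $e$. Points of $\partial^{*}H$ border the unique large component of $t\setminus B_1^{+}(t,\partial t)$. That component is contained in the large component of $t\setminus B_1^{+}(t,I_a(t,e))$ (remove fewer faces, components only merge), so $u$ remains on the hole of $\Hate$.

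Then count. Each $u$ is charged weight $1/|I_a(t,e')|$ by every $e'$ with $u\in J_a(t,e')$. The set of such $e'$ contains the $|I_a|$ (roughly $2a+1$) edges $e'$ whose segment contains $e$, so the total weight on $u$ is at least $1$. Summing over $u$ gives
\[
|\partial^{*}H|\le\sum_{e}\frac{|J_a(t,e)|}{|I_a(t,e)|}.
\]
Boundary edges lying on $\partial^{*}H$ need the same care. I expect to handle them by noting that the definition of $\Jate$ removes $\partial t$, so $\partial^{*}H\cap\partial t$ is empty or also charged, and I will check this convention. The point where $|\partial_i|\le 2a$ is harmless, since then $I_a=\partial_i$.

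\emph{Lower bound.} Let $G$ be the set of good edges $e$ with $\Jatec\subset\partial^{*}H$. For $e\in G$, $\mathbf{GOOD}_-$ gives $|\Jatec|\ge(1-\delta)m_\theta^{-1}2a-2a^{3/4}\ge(1-2\delta)m_\theta^{-1}2a$ for $a$ large.

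The plan is then a multiplicity bound: each edge $u\in\partial^{*}H$ belongs to $\Jatec$ for at most $(1+\delta)(2a)/(1-\delta)$-ish edges $e\in G$, roughly $2a(1+O(\delta))$. Summing then gives
\[
|G|\,(1-2\delta)m_\theta^{-1}2a\le 2a(1+O(\delta))\,|\partial^{*}H|,
\]
which yields the claimed $(1-3\delta)$ after adjusting constants.

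To bound the multiplicity, I would use Item 2 of $\mathbf{GOOD}$. Since $u\in\Jatec$ lies at distance more than $a^{2/3}$ in $t'$ from $\partial t$, yet $u$ is adjacent to a triangle touching $\partial t$, that triangle must touch a boundary vertex inside $I_a(t,e)$ at distance at least about $a^{3/4}$ from its ends. So $e$ is within $a$ (along $\partial t$) of a fixed boundary vertex $x_u$ adjacent to $u$, and the nesting forces $x_u$ to be essentially unique up to $O(a^{3/4})$. The admissible $e$ then form a window of length $2a+O(a^{3/4})$.

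This multiplicity step is the main obstacle. A single hull edge could a priori be adjacent to boundary vertices far apart along $\partial t$, for instance across a pinched region. The fix I would try is to use Item 3 together with Item 2: the window has size at most $2a+2a^{3/4}$ per cluster, and two far-apart clusters contradict $d_t\ge 3a$.
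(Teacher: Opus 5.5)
\textbf{Verdict: the upper bound is fine, but the lower bound has a genuine gap.} The multiplicity bound it rests on is left unproved, and the route you sketch for it would not work.

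Your upper bound is correct and is essentially the paper's argument. The paper sends each $u\in\partial^{*}H$, with $H=B_1^{\bullet}(t,\partial t)$, to the boundary edge starting at the boundary vertex of the triangle of $H$ adjacent to $u$. The boundary-edge case you defer is vacuous: both faces along a boundary edge lie in $B_1^{+}(t,\partial t)$, so $\partial^{*}H\cap\partial t=\emptyset$.

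The lower bound is not established, because everything rests on the multiplicity bound, which you leave open. Your sketch through Items~2 and~3 of $\mathbf{GOOD}(a,\delta)$ does not deliver it, for three reasons.
\begin{itemize}
\item Item~2 controls distances inside $t\setminus\mathcal{H}_a(t,e)$ away from $\partial t$. It says nothing about which boundary vertex the triangle of $H$ next to $u$ touches.
\item Item~3 at best confines that vertex to within $a^2$ of $e$ along $\partial t$. That is a window of order $a^2$, not $2a$.
\item Even granting a multiplicity of the form $2a(1+C\delta)$, you would get the constant $(1-2\delta)/(1+C\delta)$. This is below $1-3\delta$ for small $\delta$ as soon as $C>1$, and your $(1+\delta)/(1-\delta)$ gives $C\approx 2$. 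So ``adjusting constants'' does not recover the stated inequality.
\end{itemize}

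The missing observation is that the ``pinched'' scenario cannot occur. The multiplicity bound is deterministic, has no $\delta$-loss, and does not use Items~2--3.
\begin{itemize}
\item Take $u\in\partial^{*}H$. The face across $u$ from $H$ lies in the large component of $t\setminus B_1^{+}(t,\partial t)$, so it has no vertex on $\partial t$. Hence neither endpoint of $u$ is on $\partial t$.
\item The face $f$ of $H$ adjacent to $u$ must lie in $B_1^{+}(t,\partial t)$. Otherwise it would be face-adjacent through $u$ to the large component, hence part of it. So its third vertex $x_u$ is its unique vertex on $\partial t$.
\item If $u\in J_a(t,e)$, then $f\in\mathcal{H}_a(t,e)$. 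By the same face-adjacency argument, $f$ cannot lie in a small filled-in component of $t\setminus B_1^{+}(t,I_a(t,e))$. So $f\in B_1^{+}(t,I_a(t,e))$, i.e.\ $x_u$ is the starting point of an edge of $I_a(t,e)$.
\end{itemize}
This is the converse of the implication you proved for the upper bound. For $u\in\partial^{*}H$ it gives: $u\in J_a(t,e)$ iff the boundary edge $\psi(u)$ starting at $x_u$ lies in $I_a(t,e)$, iff $e\in I_a(t,\psi(u))$. Hence each $u$ lies in at most $2a+1$ of the sets $J_a(t,e)$, and for your set $G$,
\[
(1-2\delta)m_\theta^{-1}2a\,|G|\le\sum_{e\in G}|\overset{\circ}{J_a}(t,e)|\le(2a+1)\,|\partial^{*}H| .
\]
This gives the factor $(1-2\delta)\frac{2a}{2a+1}\ge 1-3\delta$ for $a$ large, when $\delta>0$.
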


\begin{proof}
Fix $t\in \mathcal{T}_{\mathbf{p}^n}(n,g_n)$ and $a,\delta>0$.
To any edge $e'\in \partial^{*}B_1^{\bullet}(t,\partial t)$, we associate $\varphi(e')$, defined as the third vertex of the triangle incident to $e'$ and contained in $B_1^{\bullet}(t,\partial t)$. By definition, $\varphi(e')\in \partial t$.
We also define $\psi(e')$ as the oriented edge on $\partial t$ with starting point $\varphi(e')$ and such that $\partial t$ lies to the right of $\psi(e')$. Then, for all $e\in \partial t$ and $e'\in \partial^{*}B_1^{\bullet}(t,\partial t)$, we have
\begin{align}\label{equivalence_equa}
e'\in J_a(t,e)
\iff
\psi(e')\in I_a(t,e)
\iff
e\in I_a(t,\psi(e'))
\qquad
\text{(see Figure~\ref{equivalence}).}
\end{align}

\begin{figure}[H]
\centering
\includegraphics[scale=0.4]{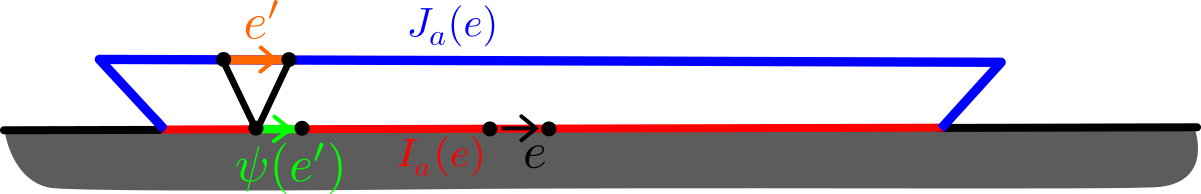}
\caption{In this example, $e'\in J_a(t,e)$. Hence $\psi(e')\in I_a(t,e)$, or equivalently $e\in I_a(t,\psi(e'))$.}
\label{equivalence}
\end{figure}

For $e\in \partial t$, we define $X(e) := J_a(t,e) \cap \partial^{*}B_1^{\bullet}(t,\partial t)$. Using \eqref{equivalence_equa}, we see that for each
$e'\in \partial^{*}B_1^{\bullet}(t,\partial t)$,
the edge $e'$ belongs exactly to the sets $X(e)$ for which $e\in I_a(t,\psi(e'))$.
Since $|I_a(t,\psi(e'))|\le 2a+1$, we obtain
\begin{align} \label{bound_to_conclude}
\sum_{e\in \partial t}\frac{|X(e)|}{2a+1}
\le
|\partial^{*}B_1^{\bullet}(t,\partial t)|
\le
\sum_{e\in \partial t}\frac{|J_a(t,e)|}{|I_a(t,e)|},
\end{align}
where the second inequality can be obtained observing that for each boundary $\partial_i t$, the numbers $|I_a(t,e)|$ are constant for $e \in \partial_i t$ and thus $\partial_i t$ contributes at most $\somme{e \in\partial_i t }{}{}\frac{|J_a(t,e)|}{|I_a(t,e)|}$ edges lying on $\partial^{*}B_1^{\bullet}(t,\partial t)$, then we sum over $i$.

Recall that $\overset{\circ}{J_a}(t,e)$ is obtained from $J_a(t,e)$ by removing the $a^{3/4}$ leftmost and rightmost edges. Also recall the definition of $\mathbf{GOOD}(a,\delta)$ in Definition~\ref{definition_GOOD}. It follows that if $(t,e)$ satisfies $\mathbf{GOOD}(a,\delta)$, then we have
\[
|\overset{\circ}{J_a}(t,e)|
\ge (1-\delta)m_{\theta}^{-1}2a - 2a^{3/4}.
\]
Choosing $a$ sufficiently large so that $a^{3/4}\le \frac{\delta}{2} m_{\theta}^{-1} a$, we deduce that
\[
|X(e)| \ge (1-2\delta)m_{\theta}^{-1}2a\,
\indi{\overset{\circ}{J_a}(t,e)\subset \partial^{*}B_1^{\bullet}(t,\partial t)}
\indi{(t,e)\text{ satisfies }\mathbf{GOOD}(a,\delta)}
.
\]
Combining this with the lower bound in \eqref{bound_to_conclude}, and choosing $a$ large enough such that $(1-3\delta) \le (1-2\delta)\frac{2a}{2a+1}$ concludes the proof.
\end{proof}

We can now give the proof of Proposition~\ref{growth_rate}.

\begin{proof}
Fix $\delta>0$. We first prove that
\begin{align}\label{upper_bound}
\mathbb{P}\bigg(
|\partial^{*}\Bnb|
\ge (1+\delta)m_{\theta}^{-1}|\mathbf{p}^{n}|
\bigg)
\underset{n\to\infty}{\longrightarrow} 0.
\end{align}

Using Lemma~\ref{inequalities}, there exists $a > 0$ such that for $n$ large enough, we have
\begin{align}\label{first_eq}
\frac{|\partial^{*}\Bnb|}{|\mathbf{p}^{n}|}
\le
\frac{1}{|\mathbf{p}^{n}|}
\sum_{e\in \partial \Tngp}
\frac{|J_a(e)|}{|I_a(e)|}.
\end{align}
The right-hand side is bounded above by
\begin{align}\label{new_equa_thomas}
&(1+\delta/4)m_{\theta}^{-1}
+
\frac{1}{|\mathbf{p}^{n}|}
\sum_{e\in \partial \Tngp}
\frac{|J_a(e)|}{|I_a(e)|}
\indi{
\frac{|J_a(e)|}{|I_a(e)|}
\ge (1+\delta/4)m_{\theta}^{-1}
}.
\end{align}

We now bound the second term in $L^1$.
By the Cauchy--Schwarz inequality,
\begin{align}\label{sec_eq}
&\mathbb{E}\bigg[
\frac{1}{|\mathbf{p}^{n}|}
\sum_{e\in \partial \Tngp}
\frac{|J_a(e)|}{|I_a(e)|}
\indi{
\frac{|J_a(e)|}{|I_a(e)|}
\ge (1+\delta/4)m_{\theta}^{-1}
}
\bigg] \\
&=
\mathbb{E}\bigg[
\frac{|J_a(\en)|}{|I_a(\en)|}
\indi{
\frac{|J_a(\en)|}{|I_a(\en)|}
\ge (1+\delta/4)m_{\theta}^{-1}
}
\bigg] \nonumber\\
&\le
\mathbb{E}\bigg[
\bigg(\frac{|J_a(\en)|}{|I_a(\en)|}\bigg)^2
\bigg]^{1/2}
\mathbb{P}\bigg(
\frac{|J_a(\en)|}{|I_a(\en)|}
\ge (1+\delta/4)m_{\theta}^{-1}
\bigg)^{1/2}. \nonumber
\end{align}

For a vertex $x$ of $\Tngp$, recall that
$D_x=\deg_{\Tngp}(x)$.
For $e\in \partial \Tngp$, denote by $\rho(e)$ its starting point.
We claim that $|J_a(\en)|
\le
\sum_{e\in I_a(\en)} D_{\rho(e)}$. Indeed, to each $e \in \Jate$, we can associate the opposite corner incident to a vertex in $I_a(\en)$ which gives the desired bound. Together with Proposition~\ref{second_moment_bounded_degree},
we obtain that for $n$ large enough,
\begin{align}\label{third_eq}
\mathbb{E}\bigg[
\bigg(\frac{|J_a(\en)|}{|I_a(\en)|}\bigg)^2
\bigg]
\le C_{\theta},
\end{align}
for some $ C_{\theta} > 0$ that only depends on the parameter $\theta$. Fix $\eta>0$. By Proposition~\ref{proba_B_goes_to_1}, there exists $a \ge 0$ such that,
for $n$ large enough,
\begin{align}\label{fourth_eq}
\mathbb{P}\bigg(
\frac{|J_a(\en)|}{|I_a(\en)|}
\ge (1+\delta/4)m_{\theta}^{-1}
\bigg)
\le
\mathbb{P}\bigg(
\mathcal{C}_n\bigg(a,\frac{\delta}{4}\bigg)^c
\bigg)
\le \eta.
\end{align}
This bounds \eqref{sec_eq} by $C_{\theta}^{\frac{1}{2}} \eta^{\frac{1}{2}}$.
Putting this with \eqref{new_equa_thomas} and Markov's inequality, we deduce that,
for $n$ large enough,
\begin{align*}
\mathbb{P}\bigg(
|\partial^{*}\Bnb|
\ge (1+\delta)m_{\theta}^{-1}|\mathbf{p}^{n}|
\bigg)
&\le
\frac{4}{3\delta}
m_{\theta}
C_{\theta}^{1/2}
\eta^{1/2}.
\end{align*}
Letting $\eta\to 0$ concludes the proof of \eqref{upper_bound}.

We now prove that, for $n$ large enough,
\begin{align*}
\mathbb{P}\bigg(
\frac{|\partial^{*}\Bnb|}{|\Bnb|}
\ge \ct,
\;
|\partial^{*}\Bnb|
\le (1-4\delta)m_{\theta}^{-1}|\mathbf{p}^{n}|
\bigg)
\underset{n\to\infty}{\longrightarrow} 0.
\end{align*}
Indeed, proving the result with $4\delta$ instead of $\delta$ is sufficient to conclude the proof.  Fix $\eta > 0$. Assume that
$\frac{|\partial^{*}\Bnb|}{|\Bnb|}\ge \ct$
and
$|\partial^{*}\Bnb|
\le (1-4\delta)m_{\theta}^{-1}|\mathbf{p}^{n}|$. For any $a \ge 0$, we define
\[
A_n(a,\delta)
:=
\#\Big\{
e\in \partial \Tngp :
\overset{\circ}{J_a}(e)\nsubseteq \partial^{*}\Bnb
\ \text{or } (\Tngp,e)
\text{ does not satisfy }\mathbf{GOOD}(a,\delta/4)
\Big\}.
\]
Using the lower bound in Lemma~\ref{inequalities},
for $a$ large enough we have
\begin{align}\label{lower_bound_ineq}
|\partial^{*}\Bnb| \ge
(1-3\delta)m_{\theta}^{-1}(|\mathbf{p}^n|-A_n(a,\delta)).
\end{align}

Under $|\partial^{*}\Bnb| \le (1-4\delta) m_{\theta}^{-1}|\mathbf{p}^n|$, we get $(1-3\delta)m_{\theta}^{-1}A_n \ge (1-3\delta)m_{\theta}^{-1}|\mathbf{p}^n| - (1-4\delta)m_{\theta}^{-1}|\mathbf{p}^n| = \delta m_{\theta}^{-1}|\mathbf{p}^n|$, so $A_n \ge \frac{\delta}{1-3\delta}|\mathbf{p}^n| \ge \delta |\mathbf{p}^n|$. On the other hand,
\begin{align*}
\frac{1}{|\mathbf{p}^{n}|}
\mathbb{E}\bigg[
\indi{
\frac{|\partial^{*}\Bnb|}{|\Bnb|}
\ge \ct
}
A_n(a,\delta)
\bigg]
&\le
\mathbb{P}\bigg(
\frac{|\partial^{*}\Bnb|}{|\Bnb|}
\ge \ct,
\mathcal{C}_n(a,\delta/4)^c
\bigg) \\
&\quad+
\mathbb{P}\bigg(
\frac{|\partial^{*}\Bnb|}{|\Bnb|}
\ge \ct,
\mathcal{C}_n(a,\delta/4),
\overset{\circ}{J_a}(\en)
\nsubseteq \partial^{*}\Bnb
\bigg).
\end{align*}
By Proposition~\ref{proba_B_goes_to_1} and Proposition~\ref{J_lies_on_hole_hull},
there exists $a \ge 0$ such that, for $n$ large enough, the sum of the two right terms is at most $\eta$. Combining the last facts and using Markov's inequality, we obtain
\begin{align*}
\mathbb{P}\bigg(\frac{|\partial^{*}\Bnb|}{|\Bnb|}
\ge \ct
,
\;
|\partial^{*}\Bnb|
\le (1-4\delta)m_{\theta}^{-1}|\mathbf{p}^{n}|
\bigg) \le
\delta^{-1} \eta.
\end{align*}
Letting $\eta\to 0$ concludes the proof.
\end{proof}

\section{Growth of hulls}\label{section_growth_of_hulls}
This section is dedicated to proving Theorem~\ref{growth_of_balls}. We will use the notation $B_r^{\bullet}$ instead of $B_r^{\bullet}(\Tng,e_n)$ when this is not ambiguous. Proposition~\ref{hull_well_defined} shows that understanding $\log(|B_r^{\bullet}|)$ is closely related to understanding $\log(|\partial^{*}B_r^{\bullet}|)$. Our main task is therefore to estimate the law of $\frac{|\partial^{*}B_{r+1}^{\bullet}|}{|\partial^{*}B_{r}^{\bullet}|}$ as long as $|\partial^{*}B_r^{\bullet}| \le n^{1-\varepsilon}$. For $\varepsilon > 0$, we recall that the event $\mathbf{Iso}_n(\varepsilon)$ occurs when the conclusions of Theorem~\ref{isoperimetric_inequalities} and Proposition~\ref{multicurves_few_boundaries} hold. For $\varepsilon,\eta,\beta > 0$, we recall that the event $\mathbf{Iso}_n(\varepsilon,\eta,\beta)$ occurs when $\mathbf{Iso}_n(\varepsilon)$ occurs and when $\mathrm{Isol}_{\beta}(\Tng) \le \eta n$. We now outline the proof. 

\begin{itemize}
	\item[$\bullet$] We fix $\varepsilon,\eta > 0$ and $\beta > 0$ such that, for $n$ large enough, the event $\mathbf{Iso}_n(\varepsilon-2\eta,\eta,\beta)$ occurs with high probability. Hence, for the remainder of the proof, we work under this event.
	
	\item[$\bullet$] We begin by proving that it is unlikely for the volume of the hulls to experience a ``large jump''. More precisely, with high probability, for any $0 \le r \le (1-\varepsilon)D_{\theta}\log(n)$, if $|B_r^{\bullet}| \le n^{1-\varepsilon+\eta}$, then $|B_{r+1}^{\bullet}| \le n^{1-\varepsilon+2\eta}$. This is done in Proposition~\ref{no_big_jumps}.
	
	\item[$\bullet$] Using these facts, the problem reduces to proving that, with high probability,
	\[
	\somme{r=0}{(1-\varepsilon)D_{\theta}\log(n)-1}{\bigg|\log\bigg(\frac{|\partial^{*}B_{r+1}^{\bullet}|}{|\partial^{*}B_{r}^{\bullet}|}\bigg)-D_{\theta}^{-1}\bigg|} = o(\log(n)).
	\]
	We will bound the terms for $ r < \log(n)^{\frac{1}{2}}$ using a crude argument inspired by that of \cite[Proposition 9]{budzinski2023distancesisoperimetricinequalitiesrandom} and the terms for $r \ge \log(n)^{\frac{1}{2}} $	using the $L^1$ convergence obtained in Proposition~\ref{growth_rate_first_moment}.
\end{itemize}

 Let $e_n$ denote a uniformly chosen oriented edge of $\Tng$ and let $f_n$ be the triangle to its right. Then $f_n$ is uniformly distributed among the faces of $\Tng$. For any $0 < \eta < \frac{\varepsilon}{2}$ and $\beta > 0$, we define the event \[
\AnII = \mathbf{Iso}_n\!\Big(\tfrac{\varepsilon}{2}-\eta\Big)\cap\big\{\mathrm{Isol}_{\beta}(\Tng)\le \eta n\big\}\cap\big\{f_n \text{ is not } \beta\text{-isolated}\big\}.
\]

 We claim that, for any $\varepsilon,\eta, \delta > 0$ with $\delta < \eta$, there exists $\beta > 0$ such that for $n$ large enough
 \begin{align}\label{Antypical}
 \mathbb{P}(\AnII) \ge 1- \delta. 
 \end{align}

 Indeed, by Corollary~\ref{Iso_typical}, $\mathbb{P}\big(\mathbf{Iso}_n(\tfrac{\varepsilon}{2}-\eta)\big)\to 1$, so for $n$ large enough $\mathbb{P}\big(\mathbf{Iso}_n(\tfrac{\varepsilon}{2}-\eta)^{c}\big)\le \tfrac{\delta}{4}$. Second, applying Proposition~\ref{few_isolated_faces} with parameter $\tfrac{\delta}{4}$, we fix $\beta>0$ such that, for $n$ large enough, $\mathbb{P}\big(\mathrm{Isol}_{\beta}(\Tng)\ge \tfrac{\delta}{4} n\big)\le \tfrac{\delta}{4}$. Since $\tfrac{\delta}{4}<\eta$, this gives in particular $\mathbb{P}\big(\mathrm{Isol}_{\beta}(\Tng) > \eta n\big)\le \tfrac{\delta}{4}$. Third, since $f_n$ is uniformly distributed among the $2n$ faces of $\Tng$, we have $\mathbb{P}\big(f_n \text{ is } \beta\text{-isolated}\mid \Tng\big)=\mathrm{Isol}_{\beta}(\Tng)/(2n)$; splitting according to whether $\mathrm{Isol}_{\beta}(\Tng)<\tfrac{\delta}{4}n$ or not and using $\mathrm{Isol}_{\beta}(\Tng)\le 2n$,
\[
\mathbb{P}\big(f_n \text{ is } \beta\text{-isolated}\big)=\mathbb{E}\Big[\tfrac{\mathrm{Isol}_{\beta}(\Tng)}{2n}\Big]\le \tfrac{\delta}{8}+\mathbb{P}\big(\mathrm{Isol}_{\beta}(\Tng)\ge \tfrac{\delta}{4}n\big)\le \tfrac{3\delta}{8}.
\]
A union bound over these three events yields, for $n$ large enough, $\mathbb{P}(\AnII^{c})\le \tfrac{\delta}{4}+\tfrac{\delta}{4}+\tfrac{3\delta}{8}=\tfrac{7\delta}{8}<\delta$, which proves \eqref{Antypical}.

We recall the constants $\ct,K_{\theta} \ge 0$ defined in Theorem~\ref{isoperimetric_inequalities}.  Under $\AnII$, as long as $r$ satisfies $ |B_r^{\bullet}| \le n^{1-\varepsilon}$, we have
\[
|\partial^{*}B_r^{\bullet}| \ge \beta|B_r^{\bullet}| \ge \frac{\beta}{3} |\partial^{*}B_{r-1}^{\bullet}|.
\]

Since each edge of $\partial^{*}B_r^{\bullet}$ is incident with at least one face of $B_{r+1}^{\bullet}\setminus B_r^{\bullet}$, and each face has $3$ edges, the number of faces of $B_{r+1}^{\bullet}\setminus B_r^{\bullet}$ is at least $|\partial^{*}B_r^{\bullet}|/3$. Thus, we have the inequality $|B_{r+1}^{\bullet}|\ge |B_r^{\bullet}|+\frac{|\partial^{*}B_r^{\bullet}|}{3}$. We have in particular
\begin{align}\label{lower_bound_Br}
|B_r^{\bullet}| \ge \bigg(1+\frac{\beta}{3}\bigg)^r.
\end{align}

In particular, for $n$ large enough, we have $|B_{\log(n)^{\frac{1}{2}}}^{\bullet}| \ge K_{\theta}\log(n)$.

  \begin{proposition}\label{no_big_jumps}
 For any $\varepsilon,\eta > 0$, we have 
 \begin{align}\label{no_jump}
\mathbb{P}\bigg(\exists r \in \{\log(n)^{\frac{1}{2}},\ldots,(1-\varepsilon)D_{\theta}\log(n)-1\}:\ |B_{r}^{\bullet}| \le n^{1-\varepsilon+\eta},\ |B_{r+1}^{\bullet}| \ge n^{1-\varepsilon+2\eta}\bigg) \underset{n \to +\infty}{\longrightarrow}0.
 \end{align}
 \end{proposition}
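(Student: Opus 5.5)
We will bound the probability of a big jump at a fixed radius $r$, and then take a union bound over the $O(\log n)$ admissible values of $r$. So it suffices to show that, for each fixed $r$,
\[
\mathbb{P}\big(|B_r^{\bullet}|\le n^{1-\varepsilon+\eta},\ |B_{r+1}^{\bullet}|\ge n^{1-\varepsilon+2\eta}\big)=o(1/\log n),
\]
uniformly in $r$. Throughout we may work on $\mathbf{Iso}_n$ (with $\varepsilon$ replaced by something like $\frac{\varepsilon}{2}-\eta$), whose complement has vanishing probability by Corollary~\ref{Iso_typical}. We may also assume $|B_r^{\bullet}|\ge K_\theta\log(n)$, which holds for $r\ge\log(n)^{1/2}$ by \eqref{lower_bound_Br}.

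\textbf{Step 1: the one-step growth is governed by the degrees along the hole.} We use the decomposition $B_{r+1}^{\bullet}=B_r^{\bullet}\cup B_1^{\bullet}(T',\partial T')$, where $T'=\Tng\setminus B_r^{\bullet}$.

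Condition on $B_r^{\bullet}$. By construction $T'$ is connected, and by the spatial Markov property (uniformity of the complement) it is a uniform triangulation of the $\mathbf{p}$-gon. Here $\mathbf{p}$ is the perimeter vector of $\partial^*B_r^{\bullet}$, with $|\mathbf{p}|\le 3n^{1-\varepsilon+\eta}=o(n)$, and its genus $g'$ satisfies $g'/n'\le\frac12-c$ by the isoperimetric control (most mass and genus lie in $T'$).

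There are two contributions to $|B_1^{\bullet}(T',\partial T')|$:
\begin{itemize}
\item The strong ball $B_1^{+}(T',\partial T')$. By \eqref{bound} and the second moment bound of Proposition~\ref{second_moment_bounded_degree}, its size has second moment at most $4C_\theta|\mathbf{p}|^2$. We apply the first inequality of that proposition together with Lemma~\ref{bounded_ratio_vertices}, which only requires $|\mathbf{p}|=o(n)$ and $g'/n'$ bounded away from $\frac12$. Note that Theorem~\ref{local_limit_boundary} is not needed here, so the condition $\ell_n=o(|\mathbf{p}|)$ is irrelevant at this stage.
\item The small components glued to form the hull. These are the components of $T'\setminus B_1^+$ other than the giant one. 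Exactly as in the proof of Proposition~\ref{hull_well_defined}, the isoperimetric inequality bounds their total size by $K_\theta\log(n)\,|\partial^*B_1^+|+\ct^{-1}|\partial^*B_1^+|$.
\end{itemize}
Combining the two, $|B_{r+1}^{\bullet}|\le |B_r^{\bullet}|+C\log(n)\,|B_1^+(T',\partial T')|$.

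\textbf{Step 2: Markov's inequality.} On the event $|B_r^{\bullet}|\le n^{1-\varepsilon+\eta}$, a jump to $n^{1-\varepsilon+2\eta}$ requires $|B_1^+(T',\partial T')|\ge n^{1-\varepsilon+2\eta}/(2C\log n)$. Conditionally on $B_r^{\bullet}$, the second moment bound and Markov's inequality give
\[
\mathbb{P}(\cdot\mid B_r^{\bullet})\le \frac{4C_\theta\,(3n^{1-\varepsilon+\eta})^2\,(2C\log n)^2}{n^{2(1-\varepsilon+2\eta)}}=O\big(n^{-2\eta}\log(n)^2\big).
\]
Summing over $r\le D_\theta\log(n)$ still gives $o(1)$.

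\textbf{Main obstacle: uniformity of the constant.} The hardest point is that $C_\theta$ must be uniform over the random conditioning. Proposition~\ref{second_moment_bounded_degree} is stated for deterministic sequences with $g_n/n\to\theta$, but its first inequality is explicit: it reduces everything to the ratios $\tau_{\mathbf{p}}(n'+1,g')/\tau_{\mathbf{p}}(n',g')$ and $\tau_{\mathbf{p}}(n'+2,g')/\tau_{\mathbf{p}}(n',g')$.
\begin{itemize}
\item Lemma~\ref{bounded_ratio_vertices} bounds these ratios uniformly as long as $g'/n'\le\frac12-4\varepsilon'$ and $|\mathbf{p}|\le\varepsilon' n'$.
\item We therefore need that, on $\mathbf{Iso}_n$, the complement $T'$ keeps almost all the genus, so that $g'/n'$ stays close to $\theta$. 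We would deduce this from the facts that $B_r^{\bullet}$ has $O(n^{1-\varepsilon+\eta})$ faces and $g'=g_n-O(|B_r^{\bullet}|+|\mathbf{p}|)$ by Euler's formula. This gives $g'/n'\to\theta$ uniformly.
\end{itemize}
The remaining care is to make the conditioning on $B_r^{\bullet}$ rigorous, since $B_r^{\bullet}$ is a stopping-type exploration set. We would handle this by summing over all possible values of $B_r^{\bullet}$ satisfying the isoperimetric constraints.
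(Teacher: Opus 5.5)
Your proof is correct and follows the same route as the paper. You condition on $B_r^{\bullet}$, use the isoperimetric inequality to reduce a jump of the hull to a large strong layer $B_1^{+}(T',\partial T')$, control that layer by Markov's inequality through Proposition~\ref{second_moment_bounded_degree} (made uniform via Lemma~\ref{bounded_ratio_vertices}), and take a union bound over the $O(\log n)$ radii. The only variation is that you bound the glued components one at a time, as in Proposition~\ref{hull_well_defined}, at the cost of a harmless $\log(n)$ factor, whereas the paper argues by contradiction and applies the isoperimetric inequality to $B_{r+1}^{\bullet}$ as a whole. A side effect is that your version never uses $|B_r^{\bullet}|\ge K_{\theta}\log(n)$; this is fortunate, since your justification via \eqref{lower_bound_Br} would need the event $\AnII$ rather than $\mathbf{Iso}_n$.
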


\begin{proof}
Let us fix $\varepsilon,\eta > 0$. Let us also fix $\delta > 0$. By \eqref{Antypical}, let us choose $\beta > 0$ such that
\begin{align*}
\mathbb{P}(\AnII) \ge 1 -\delta.
\end{align*} 
Then, we can bound \eqref{no_jump} by
\begin{align}\label{tobound1}
\mathbb{P}(\AnII^{c}) + \somme{r=\log(n)^{\frac{1}{2}}}{(1-\varepsilon)D_{\theta}\log(n)-1}{\mathbb{P}\big(\AnII, |B_{r}^{\bullet}| \le n^{1-\varepsilon+\eta}, |B_{r+1}^{\bullet}| \ge n^{1-\varepsilon+2\eta}\big)}.
\end{align}

We bound the term in the sum uniformly over $r\ge 0$. Fix $\log(n)^{\frac{1}{2}} \le r \le (1-\varepsilon)D_{\theta}\log(n)-1$. Then
\begin{align}\label{growth_too_fast}
\begin{split}
\mathbb{P}\big(\AnII, |B_{r}^{\bullet}| \le n^{1-\varepsilon+\eta}, |B_{r+1}^{\bullet}| \ge n^{1-\varepsilon+2\eta}\big)
\\= \mathbb{E}\bigg[
\indi{|B_{r}^{\bullet}| \le n^{1-\varepsilon+\eta}}
\mathbb{E}\bigg[
\indi{\AnII}
\indi{|B_{r+1}^{\bullet}| \ge n^{1-\varepsilon+2\eta}}
\ \bigg|\ B_{r}^{\bullet}
\bigg]
\bigg].
\end{split}
\end{align}

Conditionally on $\{|B_{r}^{\bullet}| \le n^{1-\varepsilon+\eta}\}$, set $T_n(r):=\Tng\setminus B_{r}^{\bullet}$ and denote by $\ell_n(r)$ its number of boundary components. Then
$T_n(r)\in\mathcal{T}_{n(r),g_n(r),\mathbf{p}^{n}(r)}$ with
\begin{align}\label{parameters}
\left\{
\begin{array}{l}
n(r)=n-\frac12|B_{r}^{\bullet}|+\frac12|\mathbf{p}^{n}(r)|-\ell_n(r),\\
g_n \ge g_n(r)\ge g_n-\frac12|B_{r}^{\bullet}|,\\
|\mathbf{p}^{n}(r)|\le 3|B_{r}^{\bullet}|.
\end{array}
\right.
\end{align}

It follows that
\[
\frac{g_n(r)}{n(r)}\underset{n\to\infty}{\longrightarrow}\theta,
\qquad
\mathbf{p}^{n}(r)=o(n),
\]
uniformly over all $r\ge0$ such that $|B_{r}^{\bullet}|\le n^{1-\varepsilon+\eta}$.

Since, \emph{a priori}, we do not know yet that $B_{r+1}^{\bullet} \neq \Tng$, we cannot apply the isoperimetric inequality to $B_{r+1}^{\bullet}$. Thus, we consider the intermediate layer $\Srn := B_1^+(T_n(r),\partial T_n(r))$. This is the family of triangulations with holes obtained by taking all the boundaries of $T_n(r)$ and the triangles of $T_n(r)$ with at least one vertex on $\partial T_n(r)$. If $|B_{r}^{\bullet}| \le n^{1-\varepsilon+\eta}$ and $|B_{r+1}^{\bullet}| \ge n^{1-\varepsilon+2\eta}$, we claim that $|\Srn| \ge  n^{1-\varepsilon+\frac{3}{2}\eta}$. To prove this, one can assume the converse and use the isoperimetric inequalities. Since $r \ge \log(n)^{\frac{1}{2}}$, and $\AnII$ occurs, we have $|B_{r}^{\bullet}\cup \Srn| \ge  K_{\theta}\log(n)$. Moreover, using the fact that $|B_{r+1}^{+}| \le |B_{r}^{\bullet}\cup \Srn| \le 2n^{1-\varepsilon+\frac{3}{2}\eta}$, using Proposition~\ref{hull_well_defined}, we deduce that exactly one hole of $B_{r}^{\bullet}\cup \Srn$ is filled by more than $n$ triangles, i.e. $B_{r+1}^{\bullet} \neq \Tng$. Now, using the isoperimetric inequality, we have $|B_{r+1}^{\bullet}| \le \ct^{-1}|\partial^{*}B_{r+1}^{\bullet}| \le \ct^{-1}|\partial^{*}\Srn| \le 3\ct^{-1}|\Srn| \le 3\ct^{-1}n^{1-\varepsilon+\frac{3}{2}\eta} < n^{1-\varepsilon+2\eta}$. This is a contradiction. 
 It follows that we have $|\Srn| \ge  \displaystyle n^{1-\varepsilon+\frac{3}{2}\eta}$. Thus \eqref{growth_too_fast} is bounded by 
\begin{align*}
\mathbb{E}\bigg[\indi{|B_{r}^{\bullet}| \le n^{1-\varepsilon+\eta}}\mathbb{P}\bigg(|\Srn| \ge   n^{1-\varepsilon+\frac{3}{2}\eta}\text{ }\bigg|\text{ }B_{r}^{\bullet}\bigg)\bigg].
\end{align*}

By Proposition~\ref{second_moment_bounded_degree}, for $n$ large enough we have
\begin{align*}
\mathbb{P}\bigg(|\Srn| \ge   n^{1-\varepsilon+\frac{3}{2}\eta}\text{ }\bigg|\text{ }B_{r}^{\bullet}\bigg) \le C_{\theta}\frac{|\mathbf{p}^{n}(r)|}{n^{1-\varepsilon+\frac{3}{2}\eta}} \le C_{\theta}n^{-\frac{\eta}{2}},
\end{align*}
uniformly over all $r\ge 0$ satisfying $|B_{r}^{\bullet}| \le n^{1-\varepsilon+\eta}$. Substituting this in \eqref{tobound1}, for $n$ large enough, we obtain the bound
\begin{align}\label{bound_Bn}
\delta + (1-\varepsilon)C_{\theta}D_{\theta}\log(n)n^{-\frac{\eta}{2}} \underset{n\to +\infty}{\to}\delta.
\end{align}
We conclude the proof by letting $\delta \to 0$.
\end{proof}
 
   \begin{proposition}\label{control_growth_small_r}
 For any $\delta > 0$, we have 
 \begin{align*}
\mathbb{P}\bigg(
\log |B_{\log(n)^{\frac{1}{2}}}^{\bullet}| \ge  \delta \log(n)\bigg) \underset{n \to +\infty}{\longrightarrow}0.
 \end{align*}
 \end{proposition}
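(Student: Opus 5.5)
We need to show that the hull at radius $R_n:=\lfloor \log(n)^{1/2}\rfloor$ has volume $n^{o(1)}$ with high probability. The plan is a first-moment argument on the growth factors $|B_{r+1}^{\bullet}|/|B_r^{\bullet}|$, in the spirit of \cite[Proposition 9]{budzinski2023distancesisoperimetricinequalitiesrandom}, using the uniform second-moment degree bound of Proposition~\ref{second_moment_bounded_degree}. We stop the exploration at the first radius where the hull becomes large, say $|B_r^{\bullet}|\ge n^{\delta/2}$. Before that time the complement $T_n(r)=\Tng\setminus B_r^{\bullet}$ has parameters as in \eqref{parameters}, with $g_n(r)/n(r)\to\theta$ and $|\mathbf{p}^n(r)|=o(n)$ uniformly. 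This is exactly the regime where Proposition~\ref{second_moment_bounded_degree} applies. We also observe that $B_{r+1}^{\bullet}=B_r^{\bullet}\cup B_1^{\bullet}(T_n(r),\partial T_n(r))$.

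\textbf{Step 1 (one-step growth).} Conditionally on $B_r^{\bullet}$, the complement $T_n(r)$ is a uniform triangulation with boundaries of perimeter $|\mathbf{p}^n(r)|\le 3|B_r^{\bullet}|$. By \eqref{bound} and Proposition~\ref{second_moment_bounded_degree}, we get
\[
\mathbb{E}\big[|B_1^{+}(T_n(r),\partial T_n(r))|\ \big|\ B_r^{\bullet}\big]\le C_\theta|B_r^{\bullet}|.
\]
The difficulty is that $B_{r+1}^{\bullet}$ also swallows the small components of $T_n(r)\setminus B_1^{+}$. To control them, we use the argument from the proof of Proposition~\ref{no_big_jumps}. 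On $\mathbf{Iso}_n$, as long as $|B_r^{\bullet}\cup\mathcal{S}_r^{+}(n)|\le n^{1-\varepsilon}$, Proposition~\ref{hull_well_defined} gives
\[
|B_{r+1}^{\bullet}|\le \max\big(K_\theta\log n,\ 3\ct^{-1}|B_r^{\bullet}\cup \mathcal{S}_r^{+}(n)|\big).
\]
Hence, on $\mathbf{Iso}_n$ and before the stopping time,
\[
\mathbb{E}\Big[\min\big(|B_{r+1}^{\bullet}|,n^{\delta}\big)\ \Big|\ B_r^{\bullet}\Big]\le C'_\theta\,\max\big(|B_r^{\bullet}|,\log n\big).
\]
The truncation at $n^{\delta}$ is there so that the isoperimetric inequality is applicable to $B_{r+1}^{+}$.

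\textbf{Step 2 (iteration).} Set $Z_r=\max(|B_{r\wedge\sigma}^{\bullet}|,\log n)$, where $\sigma$ is the stopping time above. Step 1 gives a supermartingale-type bound $\mathbb{E}[Z_{r+1}]\le C'_\theta\,\mathbb{E}[Z_r]$, with the indicator of $\mathbf{Iso}_n$ handled by restricting to that event, since its complement has vanishing probability by Corollary~\ref{Iso_typical}. Iterating gives
\[
\mathbb{E}\big[Z_{R_n}\mathbf{1}_{\mathbf{Iso}_n}\big]\le (C'_\theta)^{\log(n)^{1/2}}\log n=n^{o(1)}.
\]
Markov's inequality then yields $\mathbb{P}(Z_{R_n}\ge n^{\delta/2})\to 0$. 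On the complementary event the stopping time was not reached before $R_n$, so $|B_{R_n}^{\bullet}|<n^{\delta/2}\le n^{\delta}$.

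\textbf{Main obstacle.} The delicate step is Step 1: controlling the swallowed small components rather than just the strong ball, without yet knowing that $B_{r+1}^{\bullet}\neq\Tng$. I would handle it exactly as in the proof of Proposition~\ref{no_big_jumps}. First, bound the intermediate layer $\mathcal{S}_r^{+}(n)$ in conditional expectation. Then, on the event that the layer stays below $n^{1-\varepsilon}$, invoke Proposition~\ref{hull_well_defined} deterministically. The event that the layer is large has conditional probability at most $C_\theta|B_r^{\bullet}|/n^{1-\varepsilon}$ by Markov's inequality, and a union bound over the $\log(n)^{1/2}$ steps makes this negligible.
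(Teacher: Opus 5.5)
Your route (iterating a one-layer first-moment bound from Proposition~\ref{second_moment_bounded_degree}) differs from the paper's and can be made to work. As written, however, Step~2 has a genuine gap.

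The issue is that $\mathbf{Iso}_n$ is a global event, not measurable with respect to $B_r^{\bullet}$, so ``on $\mathbf{Iso}_n$, $\mathbb{E}[\,\cdot\mid B_r^{\bullet}]\le\dots$'' has no meaning. What Step~1 actually proves is
\[
\mathbb{E}\big[\min(|B_{r+1}^{\bullet}|,n^{\delta})\,\indi{\mathbf{Iso}_n}\,\big|\,B_r^{\bullet}\big]\le C'_\theta\max(|B_r^{\bullet}|,\log n)\quad\text{on }\{\sigma>r\},
\]
with the indicator on the left only. To iterate, you need $\mathbb{E}[Z_r]$ on the right, not $\mathbb{E}[Z_r\indi{\mathbf{Iso}_n}]$. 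The discrepancy $\mathbb{E}[Z_r\indi{\mathbf{Iso}_n^c}]$ can only be bounded by the truncation level times $\mathbb{P}(\mathbf{Iso}_n^c)$. After $\log(n)^{1/2}$ steps this error is amplified by $(C'_\theta)^{\log(n)^{1/2}}\to\infty$. Since the Markov threshold cannot exceed the truncation level, you would need $\mathbb{P}(\mathbf{Iso}_n^c)=o(e^{-c\sqrt{\log n}})$, whereas Corollary~\ref{Iso_typical} only gives $o(1)$.

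Conditioning on $\mathbf{Iso}_n$ does not help either. It destroys the uniformity of $T_n(r)$ given $B_r^{\bullet}$, which is what the degree bound rests on. Separately, your $Z_r$ is untruncated, while Step~1 only controls $\min(|B_{r+1}^{\bullet}|,n^{\delta})$, so nothing bounds $Z_\sigma$ at the stopping step.

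The fix is to iterate the perimeter rather than the volume, because its one-step bound needs no isoperimetry.
\begin{itemize}
\item Since $\partial^{*}B_{r+1}^{\bullet}\subset\partial^{*}\Srn$, \eqref{bound} gives $|\partial^{*}B_{r+1}^{\bullet}|\le 2\sum_{x\in\partial T_n(r)}D_x$. Hence $\mathbb{E}[|\partial^{*}B_{r+1}^{\bullet}|\mid B_r^{\bullet}]\le C_\theta|\partial^{*}B_r^{\bullet}|$ on the adapted event $\{\sigma>r\}$.
\item Iterating, $\mathbb{E}[|\partial^{*}B_{R_n\wedge\sigma}^{\bullet}|]\le C_\theta^{\log(n)^{1/2}}\,\mathbb{E}[|\partial^{*}B_1^{\bullet}|]=n^{o(1)}$, with no reference to $\mathbf{Iso}_n$.
\item The global event is then used only once, at the end. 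On $\mathbf{Iso}_n\cap\{\sigma\le R_n\}$, the isoperimetric inequality gives $|\partial^{*}B_\sigma^{\bullet}|\ge\ct n^{\delta/2}$, provided $B_\sigma^{\bullet}\neq\Tng$.
\item By Proposition~\ref{hull_well_defined} and $B_\sigma^{+}\subset B_{\sigma-1}^{\bullet}\cup\mathcal{S}^{+}_{\sigma-1}(n)$, the condition $B_\sigma^{\bullet}\neq\Tng$ holds outside $\{\exists r<R_n\wedge\sigma:\ |\Srn|\ge n^{1/2}\}$. That event has probability $O(\log(n)^{1/2}n^{(\delta-1)/2})$ by Markov.
\item Markov's inequality on the stopped perimeter then concludes.
\end{itemize}

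For comparison, the paper avoids iteration altogether. It bounds the expectation in one shot by cutting along a path of length $k\le\log(n)^{1/2}$ from the root, which lands in $\mathcal{T}_{2k+1}(n+k,g_n)$. Lemma~\ref{bounded_ratio_vertices} then gives $3\log(n)C_\theta^{\log(n)^{1/2}}=n^{o(1)}$, with no degree moments or stopping times. Such a path count only reaches points within distance $\log(n)^{1/2}$ of the root, so it really controls the ball. Passing to the hull costs one application of Proposition~\ref{hull_well_defined} on $\mathbf{Iso}_n$ at the final radius. That single use of the global event is harmless, unlike a use at every step.
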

\begin{proof}
Let us fix $\delta > 0$. Let us fix $t\in\mathcal{T}(n,g_n)$ and $x \in B_{\log(n)^{\frac{1}{2}}}^{\bullet}(t)$. Applying the classical root transformation that transforms the root edge $e$ into a $1$-gon, then choosing a path $\gamma$ from $\rho$ to $x$ and cutting along this path produces a triangulation $t'  \in \mathcal{T}_{2k+1}(n+k,g_n)$ with  $0 \le k \le \log(n)^{\frac{1}{2}}$ equipped with an extra root edge $e'$ on the boundary that corresponds to the last edge used by $\gamma$. Moreover the map $(t,x) \mapsto (t',e')$ is injective. We obtain the following bound
\begin{align*}
\mathbb{E}[|B_{\log(n)^{\frac{1}{2}}}^{\bullet}|] &\le \frac{1}{\tau(n,g_n)}\somme{k=0}{\log(n)^{\frac{1}{2}}}{(2k+1)\tau_{2k+1}(n+k,g_n)} \\
\le &  \frac{(2\log(n)^{\frac{1}{2}}+1)}{\tau(n,g_n)}\somme{k=0}{\log(n)^{\frac{1}{2}}}{\tau_{2k+1}(n+k,g_n)}.
\end{align*}  
Finally using Lemma~\ref{bounded_ratio_vertices} uniformly over all $k$ in the sum, for $n$ large enough we obtain the bound 
\begin{align*}
\mathbb{E}[|B_{\log(n)^{\frac{1}{2}}}^{\bullet}|] \le 3 \log(n)C_{\theta}^{\log(n)^{\frac{1}{2}}} = n^{o(1)}.
\end{align*}
We conclude the proof using Markov's inequality.
\end{proof}

   \begin{proposition}\label{control_growth_large_r}
 For any $\varepsilon,\eta > 0$, we have 
 \begin{align*}
\mathbb{P}\bigg(
\somme{r=\log(n)^{1/2}+1}{(1-\varepsilon)D_{\theta}\log(n)-1}{
\indi{|B_{r}^{\bullet}|\le n^{1-\varepsilon+\eta}}\bigg|\log\bigg(\frac{|\partial^{*}B_{r+1}^{\bullet}|}{|\partial^{*}B_{r}^{\bullet}|}\bigg)-D_{\theta}^{-1}\bigg|}
\ge \frac{\eta}{4}\log(n)\bigg) \underset{n \to +\infty}{\longrightarrow}0.
 \end{align*}
 \end{proposition}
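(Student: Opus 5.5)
The plan is a first moment argument: bound the expectation of each summand and sum over the $O(\log n)$ values of $r$. Fix $\varepsilon,\eta>0$ and $\delta>0$. By \eqref{Antypical} we pick $\beta$ with $\mathbb{P}(\AnII)\ge 1-\delta$, and we work under $\AnII$. We set
\[
Z_r:=\indi{\AnII}\indi{|B_r^{\bullet}|\le n^{1-\varepsilon+\eta}}\bigg|\log\bigg(\frac{|\partial^{*}B_{r+1}^{\bullet}|}{|\partial^{*}B_{r}^{\bullet}|}\bigg)-D_{\theta}^{-1}\bigg|.
\]
By Markov's inequality it then suffices to show $\max_{r}\mathbb{E}[Z_r]\to 0$, uniformly over $\log(n)^{1/2}<r\le (1-\varepsilon)D_\theta\log(n)$. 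Indeed, the sum has at most $D_\theta\log(n)$ terms, so its expectation is $o(\log n)$. Letting $\delta\to0$ at the end finishes the proof.

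To bound $\mathbb{E}[Z_r]$, we condition on $B_r^{\bullet}$ and set $T_n(r)=\Tng\setminus B_r^{\bullet}$. By the spatial Markov property, $T_n(r)$ is uniform in $\mathcal{T}_{n(r),g_n(r),\mathbf{p}^n(r)}$, with parameters controlled as in \eqref{parameters}. The identities $\partial^*B_r^\bullet=\partial T_n(r)$ and $\partial^*B_{r+1}^\bullet=\partial^*B_1^\bullet(T_n(r),\partial T_n(r))$ give
\[
\frac{|\partial^{*}B_{r+1}^{\bullet}|}{|\partial^{*}B_{r}^{\bullet}|}=\frac{|\partial^*\Bnb|}{|\mathbf{p}^n(r)|}
\]
for $T_n(r)$. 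The hypotheses of Proposition~\ref{growth_rate_first_moment} hold uniformly in $r$:
\begin{itemize}
\item $g_n(r)/n(r)\to\theta$ and $|\mathbf{p}^n(r)|\le 3n^{1-\varepsilon+\eta}=o(n)$ (taking $\eta<\varepsilon$);
\item $\ell_n(r)=o(|\mathbf{p}^n(r)|)$ by Proposition~\ref{multicurves_few_boundaries}, since $\partial^*B_r^\bullet$ is a separating multicurve with $k_1=|B_r^\bullet|\ge K_\theta\log n$ (by \eqref{lower_bound_Br}), and $|\mathbf{p}^n(r)|\ge\ct|B_r^\bullet|$, so $\ell_n(r)\le k_1/\log\log n$.
\end{itemize}
The indicator $|\partial^*\Bnb|/|\Bnb|\ge\ct$ is guaranteed under $\mathbf{Iso}_n$ (as remarked after Proposition~\ref{growth_rate}), provided $|B_{r+1}^\bullet|\le n^{1-\varepsilon/2}$ so that the isoperimetric inequality applies to $B_{r+1}^\bullet$. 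This is exactly the ``large jump'' situation handled in the proof of Proposition~\ref{no_big_jumps}: outside an event of conditional probability $O(n^{-\eta/2})$, we have $|\Srn|< n^{1-\varepsilon+3\eta/2}$, hence $B_{r+1}^\bullet\ne\Tng$ and the ratio bound holds. On that exceptional event we bound the log-ratio crudely, using $|\partial^*B^\bullet_{r+1}|\le 6n$ and $|\partial^*B_r^\bullet|\ge1$, which costs $O(\log n\cdot n^{-\eta/2})=o(1)$.

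The main step is converting the $L^1$ convergence of the ratio $X:=|\partial^*\Bnb|/|\mathbf{p}^n(r)|$ to $m_\theta^{-1}=e^{D_\theta^{-1}}$ into convergence of $\mathbb{E}|\log X-\log m_\theta^{-1}|$. Above, $\log$ is Lipschitz on $[c,\infty)$ for any fixed $c>0$. Near $0$, we use that under the isoperimetric indicator $X\ge \ct|\Bnb|/|\mathbf{p}^n(r)|\ge \ct/3$ (because $|\Bnb|\ge|\mathbf p^n(r)|/3$), so $\log X$ is bounded below by a constant. Hence $|\log X-\log m_\theta^{-1}|\le C|X-m_\theta^{-1}|$ on this event, and Proposition~\ref{growth_rate_first_moment} gives $\mathbb{E}[Z_r\mid B_r^\bullet]\to0$. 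The delicate point is that this convergence must be uniform over the conditioning. Since Proposition~\ref{growth_rate_first_moment} holds for every admissible sequence, we argue by contradiction: extract a worst-case sequence of parameters $(r_n,B_{r_n}^\bullet)$ violating uniformity, and apply the proposition to it.
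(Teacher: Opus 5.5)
Your overall architecture is the paper's: Markov's inequality over the $O(\log n)$ values of $r$, conditioning on $B_r^{\bullet}$, the spatial Markov property, uniform admissibility of the parameters for Proposition~\ref{growth_rate_first_moment}, and linearising the logarithm through a lower bound on the ratio. However, the way you dispose of the large-jump event does not work as written.

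On the exceptional event $\{|\Srn|\ge n^{1-\varepsilon+\frac{3}{2}\eta}\}$, nothing prevents $B_{r+1}^{\bullet}=\Tng$. This happens as soon as $|B_{r+1}^{+}|\ge n$, for instance if a vertex of $\partial^{*}B_r^{\bullet}$ has degree of order $n$, and your argument gives no reason why this is impossible under $\AnII$. In that case $\partial^{*}B_{r+1}^{\bullet}=\emptyset$, the ratio vanishes and the summand is $+\infty$. Your crude bound ($|\partial^{*}B_{r+1}^{\bullet}|\le 6n$, $|\partial^{*}B_{r}^{\bullet}|\ge 1$) only controls the logarithm from above. The lower side needs $|\partial^{*}B_{r+1}^{\bullet}|\ge 1$, which is exactly what may fail. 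Hence $\mathbb{E}[Z_r]$ may be infinite, and the Markov step yields nothing.

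The repair is to discard this event at the level of probabilities, before taking expectations. Your conditional estimate $O(n^{-\eta/2})$, together with a union bound over the at most $D_{\theta}\log(n)$ values of $r$, shows that with probability tending to $1$ no $r$ in the range has both $|B_r^{\bullet}|\le n^{1-\varepsilon+\eta}$ and $|\Srn|\ge n^{1-\varepsilon+\frac{3}{2}\eta}$. This is the content of Proposition~\ref{no_big_jumps}. The paper uses it precisely to insert $\indi{|B_{r+1}^{\bullet}|\le n^{1-\varepsilon+2\eta}}$ into each summand before applying Markov; on that event $B_{r+1}^{\bullet}\neq\Tng$ and the isoperimetric indicator holds.

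Note also that $\indi{\AnII}$ is not $B_r^{\bullet}$-measurable. Before conditioning you must dominate it by the $B_r^{\bullet}$-measurable admissibility conditions times the ratio indicator, as the paper does explicitly.

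With these changes your proof coincides with the paper's. Your lower bound $X\ge \ct/3$ (from the ratio indicator and $|\Bnb|\ge|\mathbf{p}^{n}(r)|/3$) is a valid substitute for the paper's $\beta/3$. Your subsequence argument makes explicit the uniformity over the conditioning that the paper only asserts.
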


\begin{proof}

Let us fix $\varepsilon,\eta > 0$. Let us also fix $\delta > 0$. By \eqref{Antypical}, let us choose $\beta > 0$ such that
\begin{align*}
\mathbb{P}(\AnII) \ge 1 -\delta.
\end{align*} 
Using this and Proposition~\ref{no_big_jumps}, it suffices to bound
\begin{align}\label{tobound3}
 \mathbb{P}\bigg(\AnII,
\somme{r=\log(n)^{1/2}+1}{(1-\varepsilon)D_{\theta}\log(n)-1}{
\indi{|B_{r}^{\bullet}|\le n^{1-\varepsilon+\eta}}
\indi{|B_{r+1}^{\bullet}|\le n^{1-\varepsilon+2\eta}}
\bigg|\log\bigg(\frac{|\partial^{*}B_{r+1}^{\bullet}|}{|\partial^{*}B_{r}^{\bullet}|}\bigg)-D_{\theta}^{-1}\bigg|}
\ge \frac{\eta}{4}\log(n)\bigg).
\end{align}
We bound this with Markov's inequality. This gives 
\begin{align}\label{bound_Dndelta}
\bigg(\frac{\eta}{4}\log(n)\bigg)^{-1}\somme{r=\log(n)^{\frac{1}{2}}+1}{(1-\varepsilon)D_{\theta}\log(n)-1}{\mathbb{E}\bigg[\indi{\AnII}\indi{|B_{r}^{\bullet}|\le n^{1-\varepsilon+\eta}}\indi{|B_{r+1}^{\bullet}|\le n^{1-\varepsilon+2\eta}}\bigg|\log\bigg(\frac{|\partial^{*}B_{r+1}^{\bullet}|}{|\partial^{*}B_{r}^{\bullet}|}\bigg) - D_{\theta}^{-1}\bigg|\bigg]}.
\end{align}
For any $r\ge 0$, using the fact that $ \frac{|\partial^{*}B_{r+1}^{\bullet}|}{|\partial^{*}B_{r}^{\bullet}|}\ge \frac{\beta}{3}$, we find 
\begin{align} \label{ineq_toref}
&\mathbb{E}\bigg[\indi{\AnII}\indi{|B_{r}^{\bullet}|\le n^{1-\varepsilon+\eta}}\indi{|B_{r+1}^{\bullet}|\le n^{1-\varepsilon+2\eta}}\bigg|\log\bigg(\frac{|\partial^{*}B_{r+1}^{\bullet}|}{|\partial^{*}B_{r}^{\bullet}|}\bigg) - D_{\theta}^{-1}\bigg|\bigg] \\&\le \nonumber \lambda \mathbb{E}\bigg[\indi{\AnII}\indi{|B_{r}^{\bullet}|\le n^{1-\varepsilon+\eta}}\indi{|B_{r+1}^{\bullet}|\le n^{1-\varepsilon+2\eta}}\bigg|\frac{|\partial^{*}B_{r+1}^{\bullet}|}{|\partial^{*}B_{r}^{\bullet}|} - m_{\theta}^{-1}\bigg|\bigg],
\end{align}
where $\lambda = \max(m_{\theta},3\beta^{-1})$.

 Under $\AnII$, for any $r \in \{\log(n)^{\frac{1}{2}}+1,\cdots,(1-\varepsilon)D_{\theta}\log(n)-1\}$ such that $|B_{r}^{\bullet}|\le n^{1-\varepsilon+\eta}$, by \eqref{lower_bound_Br}, we have $ |B_{r}^{\bullet}| \ge   K_{\theta}\log(n)$. Conditionally on $\{|B_{r}^{\bullet}| \le n^{1-\varepsilon+\eta}\}$, set $T_n(r):=\Tng\setminus B_{r}^{\bullet}$ and denote by $\ell_n(r)$ its number of boundary components. As in \eqref{parameters}, we have $T_n(r)\in\mathcal{T}_{\mathbf{p}^{n}(r)}(n(r),g_n(r))$ such that $(n(r),g_n(r),\mathbf{p}^{n}(r))_{r\ge 0}$ satisfy
\[
\frac{g_n(r)}{n(r)}\xrightarrow[n\to\infty]{}\theta,
\qquad
\mathbf{p}^{n}(r)=o(n(r)),
\]
uniformly over all $r\ge0$ such that $|B_{r}^{\bullet}|\le n^{1-\varepsilon+\eta}$. Moreover, the triangulation $T_n(r)$ is uniformly distributed in $\mathcal{T}_{n(r),g_n(r),\mathbf{p}^{n}(r)}$. Moreover, we have $\mathbf{p}^n(r) = |\partial^{*}B_{r}^{\bullet}| \ge \ct|B_{r}^{\bullet}|$. Then, recalling that $\ell_n(r)$ denotes the number of holes of $B_{r}^{\bullet}$ and using the fact that under $\AnII$, Proposition~\ref{multicurves_few_boundaries} is satisfied, we have $$\ell_n(r) \le \displaystyle\frac{\mathbf{p}^n(r) }{\log(\log(n))},$$
uniformly over all $r\ge 0$ such that $|B_{r}^{\bullet}|\le n^{1-\varepsilon+\eta}$.

 Also, note that $B_{r+1}^{\bullet} = B_{r}^{\bullet}\cup \Srb$ where $\Srb:= B_1^{\bullet}(T_n(r),\partial T_n(r))$. Thus, we have $\partial^{*}\Srb = \partial^{*}B_{r+1}^{\bullet}$. Moreover, since $K_{\theta}\log(n) \le|B_{r+1}^{\bullet}| \le n^{1-\varepsilon+2\eta}$, we deduce by the isoperimetric inequality that $\frac{|\partial^{*}B_{r+1}^{\bullet}|}{|B_{r+1}^{\bullet}|}\ge \ct$. It follows that the right-hand side of \eqref{ineq_toref} is bounded by 
 \begin{align*}
 &\lambda \mathbb{E}\bigg[\indi{K_{\theta}\log(n)\le|B_{r}^{\bullet}| \le n^{1-\varepsilon+\eta}}\indi{|\mathbf{p}^{n}(r)| \ge K_{\theta}\log(n)}\indi{\ell_n(r) \le \frac{|\mathbf{p}^{n}(r)|}{\log(\log(n))}}\indi{\frac{|\partial^{*}B_{r+1}^{\bullet}|}{|B_{r+1}^{\bullet}|}\ge \ct}\bigg|\frac{|\partial^{*}B_{r+1}^{\bullet}|}{|\mathbf{p}^{n}(r)|} - m_{\theta}^{-1}\bigg|\bigg]\\
 &=\lambda \mathbb{E}\bigg[\indi{K_{\theta}\log(n) \le|B_{r}^{\bullet}| \le n^{1-\varepsilon+\eta}}\indi{|\mathbf{p}^{n}(r)| \ge K_{\theta}\log(n)}\indi{\ell_n(r) \le \frac{|\mathbf{p}^{n}(r)|}{\log(\log(n))}}\mathbb{E}\bigg[\indi{\frac{|\partial^{*}B_{r+1}^{\bullet}|}{|B_{r+1}^{\bullet}|}\ge \ct}\bigg|\frac{|\partial^{*}B_{r+1}^{\bullet}|}{|\mathbf{p}^{n}(r)|} - m_{\theta}^{-1}\bigg|\bigg|B_{r}^{\bullet}\bigg]\bigg].
 \end{align*}
 Now, using Proposition~\ref{growth_rate_first_moment}, we obtain 
\begin{align*}
\mathbb{E}\bigg[\indi{\frac{|\partial^{*}B_{r+1}^{\bullet}|}{|B_{r+1}^{\bullet}|}\ge \ct}\bigg|\frac{|\partial^{*}B_{r+1}^{\bullet}|}{|\mathbf{p}^{n}(r)|} - m_{\theta}^{-1}\bigg|\bigg|B_{r}^{\bullet}\bigg] \underset{n \to +\infty}{\rightarrow} 0,
\end{align*}
where the convergence holds uniformly over all $r\ge 0$ such that $K_{\theta}\log(n)\le|B_{r}^{\bullet}| \le n^{1-\varepsilon+\eta}$ and $|\mathbf{p}^{n}(r)| \ge K_{\theta}\log(n)$ and $\ell_n(r) \le \frac{|\mathbf{p}^{n}(r)|}{\log(\log(n))}$.
It follows that \eqref{tobound3} is bounded by
\begin{align}
\delta +\bigg(\frac{\eta}{4}\log(n)\bigg)^{-1}\lambda D_{\theta}\log(n) \times o(1) \underset{n \to +\infty}{\rightarrow}  \delta.
\end{align}
 We conclude by letting $\delta \to 0$.
\end{proof}
We can now conclude the proof of Theorem~\ref{growth_of_balls}.
\begin{proof}
Fix $0<\eta<\frac{\varepsilon}{2}$. By Proposition~\ref{control_growth_small_r}, it suffices to prove 
\begin{align*}
\mathbb{P}\bigg(\bigg| \log\bigg(\frac{|B_{(1-\varepsilon)D_{\theta}\log(n)}^{\bullet}|}{|B_{\log(n)^{\frac{1}{2}}}^{\bullet}|}\bigg) - (1-\varepsilon)\log(n) \bigg| \ge \eta\log(n) \bigg) \underset{n \to +\infty}{\longrightarrow}0.
\end{align*}
Let us assume that for any $r \in \{\log(n)^{\frac{1}{2}},\ldots,(1-\varepsilon)D_{\theta}\log(n)-1\}$, if $ |B_{r}^{\bullet}| \le n^{1-\varepsilon+\eta}$ then $|B_{r+1}^{\bullet}| \le n^{1-\varepsilon+2\eta}$ (which occurs with high probability using Proposition~\ref{no_big_jumps}), that $|B_{\log(n)^{1/2}}^{\bullet}|\le n^{\eta/8}$ (which occurs with high probability by Proposition~\ref{control_growth_small_r}) and that the event appearing in the above equation occurs. Then, if $|B_{(1-\varepsilon)D_{\theta}\log(n)}^{\bullet}| \le n^{1-\varepsilon-\eta}$ and since $|B_{(1-\varepsilon)D_{\theta}\log(n)}^{\bullet}| \ge |B_{\log(n)^{\frac{1}{2}}}^{\bullet}|\ge K_{\theta}\log(n)$, using Proposition~\ref{hull_well_defined}, we have for $n$ large enough
\begin{align}\label{approximate_volume_with_perimeter}
\somme{r=\log(n)^{\frac{1}{2}}}{(1-\varepsilon)D_{\theta}\log(n)-1}{\indi{|B_{r}^{\bullet}| \le n^{1-\varepsilon+\eta}}\bigg|\log\bigg(\frac{|\partial^{*}B_{r+1}^{\bullet}|}{|\partial^{*}B_{r}^{\bullet}|}\bigg)-D_{\theta}^{-1}\bigg|} \ge \frac{\eta}{4}\log(n),
\end{align}
where the indicators are all equal to $1$ in this case.

Now, in the case where $|B_{(1-\varepsilon)D_{\theta}\log(n)}^{\bullet}| \ge n^{1-\varepsilon+\eta}$, choosing $\log(n)^{\frac{1}{2}} \le r_0 \le (1-\varepsilon)D_{\theta}\log(n) $ minimal such that $|B_{r_0}^{\bullet}| \ge n^{1-\varepsilon+\frac{3}{4}\eta}$, we deduce the same way that for $n$ large enough
\begin{align*}
\somme{r=\log(n)^{\frac{1}{2}}}{r_0-1}{\indi{|B_{r}^{\bullet}| \le n^{1-\varepsilon+\eta}}\bigg|\log\bigg(\frac{|\partial^{*}B_{r+1}^{\bullet}|}{|\partial^{*}B_{r}^{\bullet}|}\bigg)-D_{\theta}^{-1}\bigg|} \ge \frac{\eta}{4}\log(n),
\end{align*}
which implies \eqref{approximate_volume_with_perimeter}. We conclude the proof using Proposition~\ref{control_growth_large_r} which says that the probability of \eqref{approximate_volume_with_perimeter} tends to $0$ as $n \to +\infty$.
\end{proof}

\printbibliography
\end{document}